\newtheorem{theorem}{Theorem}[section]
\newtheorem{lemma}[theorem]{Lemma}
\theoremstyle{definition}
\newtheorem{proposition}[theorem]{Proposition}
\newtheorem{corollary}[theorem]{Corollary}
\newtheorem{conjecture}[theorem]{Conjecture}
\theoremstyle{remark}
\newtheorem{remark}[theorem]{Remark}
\numberwithin{equation}{section}
   \newcommand{\cM}{{\mathcal M}}
    \newcommand{\cH}{{\mathcal H }}
  \newcommand{\C}{{\mathbb C}}
  \newcommand{\Z}{{\mathbb Z}}
  \newcommand{\N}{{\mathbb  N}}
\newcommand{\cL}{\mathcal L}
\newcommand{\bt}{\mathbf{t}}
\newcommand{\LL}{\llangle[\big]}
\newcommand{\RR}{\rrangle[\big]}
\newsavebox{\@brx}
\newcommand{\llangle}[1][]{\savebox{\@brx}{\(\m@th{#1\langle}\)}%
  \mathopen{\copy\@brx\kern-0.5\wd\@brx\usebox{\@brx}}}
\newcommand{\rrangle}[1][]{\savebox{\@brx}{\(\m@th{#1\rangle}\)}%
  \mathclose{\copy\@brx\kern-0.5\wd\@brx\usebox{\@brx}}}
\newcommand{\<}{\langle}
\renewcommand{\>}{\rangle}
\newtheorem{axiom}[theorem]{Axiom}
\newcommand{\fj}[2]{#1 | #2 \rangle}
\renewcommand{\arraystretch}{1.1}
\begin{document}
% \title[short text for running head]{full title}
\title[FJRW theory of two-variable polynomials]{Semisimple FJRW theory of polynomials \\ with two variables}

%\date{\today}
%    Only \author and \address are required; other information is
%    optional.  Remove any unused author tags.
%    author one information
% \author[short version for running head]{name for top of paper}

\author{Amanda Francis, Weiqiang He, and Yefeng Shen}

\maketitle

\begin{abstract}
We study the Dubrovin-Frobenius manifold in the Fan-Jarvis-Ruan-Witten theory of Landau-Ginzburg pairs $(W, \<J\>)$, where $W$ is an invertible nondegenerate quasihomogeneous polynomial with two variables and $\<J\>$ is the minimal admissible group of $W$. 
We conjecture that the Dubrovin-Frobenius manifolds from these FJRW theory are semisimple. 
We show the conjecture holds true for simple singularities and almost all Brieskorn-Pham polynomials.
For Brieskorn-Pham polynomials, the result follows from the calculation of a quantum Euler class in the FJRW theory. %and the quantum multiplication of this class along a special direction. 
As a consequence, our result shows that for the FJRW theory of these Landau-Ginzburg pairs, both a Dubrovin type conjecture and a Virasoro conjecture hold true. % which is proposed by the last two authors.
%On the other hand, we construct a full exceptional collection of objects in the equivariant matrix factorizations for these singularities. 
Along the way we give explicit formulae for computing concave genus-$0$ $5$-point FJRW invariants (correlators) corresponding to admissible Landau-Ginzburg pairs $(W,G)$.
\end{abstract}

{\hypersetup{linkcolor=black}\setcounter{tocdepth}{1} \tableofcontents}

\section{Introduction}

\subsection{Dubrovin conjecture for semisimple quantum cohomology}

Let $X$ be a Fano variety and let $n$ be the dimension of its cohomology. 
In Dubrovin's 1998 ICM talk \cite{Dubrovin}, he conjectured that the bounded derived category of coherent sheaves of $X$ admits a full system of $n$ exceptional objects (in the sense of \cite{BP}) if and only if the quantum cohomology of $X$ is semisimple.
This is the first of three conjectures in \cite{Dubrovin} which unveils remarkable relationships between the algebraic structure of the Fano manifold (from the derived category) and its analytic structure (from the quantum cohomology).

The quantum cohomology is said to be semisimple if the underlying connected {\em Dubrovin-Frobenius manifold} (invented by Dubrovin) has a point with a semisimple Frobenius algebra. %where the quantum product is given by the genus-zero Gromov-Witten invariants of the variety.
Dubrovin's conjecture has been generalized by Bayer-Manin \cite{BM} and the Fano condition can be dropped \cite{Bayer}.
Other examples of semisimple Dubrovin-Frobenius manifolds have been found in Saito's theory of primitive forms \cite{Sai} and other places.

Semisimple Dubrovin-Frobenius manifolds have remarkable properties. 
Givental reconstructed higher genus formulas \cite{G-ss} for semisimple Dubrovin-Frobenius manifolds, and proved that the higher genus formula satisfies the Virasoro constraints \cite{G-vir}.
The uniqueness of Givental's formula has been proved by Teleman \cite{Teleman} for semisimple Dubrovin-Frobenius manifold that satisfies a homogeneity condition.

%\subsubsection{Results on semisimplicity}
%Results on semisimplicity in quantum cohomology including earlier result for toric fano \cite{??} and some recent result \cite{Abr, Ke}.
%\subsubsection{Application of semisimplicity}
%Application to higher genus \cite{G-ss, Teleman}  and Virasoro constraints .

%\subsubsection{Status of Dubrovin's conjecture} Dubrovin conjecture has been proved or investigated for many examples. In QC, \cite{??}, In singularity theory and derived category of matrix factorizations \cite{AT, Kra}.

%\newpage

\subsection{FJRW theory and semisimplicity}
The focus of this paper will be the Dubrovin-Frobenius manifold that appears in Landau-Ginzburg (LG) A-model of a pair $(W, G)$, where $W: \C^N\to \C$ is a quasi-homogeneous polynomial and $G$ is an abelian symmetry group of $W$. 
Here a polynomial $W$ is called {\em quasihomogeneous} if there exists positive integers $(d; w_1, \ldots, w_N)$ such that
$$\lambda^d W(x_1,\ldots,x_N) = W(\lambda^{w_1}x_1,\ldots,\lambda^{w_N} x_N), \forall \lambda\in \C.$$
The rational number $q_i:={w_i\over d}$ is called the weight of the variable $x_i$. 

We will consider the so-called {\em admissible LG pair $(W, G)$}, where the quasihomogeneous polynomial $W$ is required to be {\em nondegenerate}, that is, (1) it has only isolated critical points at the origin, (2) it contains no $x_ix_j$ term for $i\neq j$.
%the choice of weights of variables are unique.
The group $G$ is required to be a subgroup of the {\em maximal symmetry group of $W$}
$$G_W:=\{(\lambda_1, \ldots, \lambda_N)\in \C^*\mid W(\lambda_1x_1, \ldots, \lambda_Nx_N)=W(x_1, \ldots, x_N)\},$$
that contains the {\em exponential grading element} 
$$J:=(e^{2\pi\sqrt{-1}q_1}, \ldots, e^{2\pi\sqrt{-1}q_N})\in \C^N.$$

For any admissible LG pair $(W, G)$, Fan, Jarvis, and Ruan constructed an intersection theory on the moduli space of W-spin structures of G-orbifold curves in %a series of works 
\cite{FJR1, FJR}, based on a proposal of Witten. 
This theory is called Fan-Jarvis-Ruan-Witten (FJRW) theory nowadays. It is analogous to Gromov-Witten (GW) theory and generalizes the theory of $r$-spin curves, where $W = x^r$ and $G = \<J\>$.
The main ingredient of the FJRW theory is a cohomological field theory (CohFT) 
$$\{\Lambda^{W, G}_{g, k}: \cH_{W, G}^{\otimes k}\to H^*(\overline{\cM}_{g,k})\}_{2g-2+k>0}$$
on a state space $\cH_{W, G}$. The state space is the $G$-invariant subspace of the middle-dimensional relative cohomology for $W$ with a nondegenerate pairing $\< \cdot, \cdot\>$; see \cite[Section 3.2]{FJR}. Integrating the CohFT over the moduli spaces of stable curves with markings produces the so-called {\em FJRW invariants}.
We refer the readers to \cite{FJR} for more details on FJRW invariants.

Despite their different geometric nature, there are remarkable connections between FJRW invariants and GW invariants for Calabi-Yau manifolds via Landau-Ginzburg/Calabi-Yau correspondence, and connections between FJRW theory and Saito's theory of primitive forms via LG mirror symmetry.  

\subsubsection{A Dubrovin conjecture in FJRW theory}
According to \cite[Corollary 4.2.8]{FJR}, the genus-zero FJRW theory defines a formal Dubrovin-Frobenius manifold on the state space $\cH_{W, G}$. 
%In general, for any admissible pair $(W, G)$ and 
For any $\bt\in \cH_{W, G}$, 
there is a (formal) Frobenius algebra 
$(\cH_{W, G}, \star_{\bt}, \< \cdot, \cdot\>)$.
We say the FJRW theory of $(W, G)$ or the Dubrovin-Frobenius manifold on $\cH_{W, G}$ is semisimple if the corresponding Frobenius algebra $(\cH_{W, G}, \star_{\bt}, \< \cdot, \cdot \>)$ is semisimple at some point $\bt\in \cH_{W, G}$.
%We say the FJRW theory of $(W, G)$ is semisimple if the underlying Dubrovin-Frobenius manifold is semisimple.
Here is a folklore conjecture of Dubrovin type. 
\begin{conjecture}
[Dubrovin conjecture in FJRW theory]
\label{dubrovin-fjrw}
Let $n$ be the rank of the state space $\cH_{W, G}$.
The derived category of $G$-equivariant matrix factorizations of $W$ admits a full system of $n$ exceptional objects if and only if the FJRW theory of $(W, G)$ is semisimple.
\end{conjecture}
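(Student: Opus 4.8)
Because the statement is an equivalence between a categorical condition (a full exceptional collection of length $n$) and an analytic one (semisimplicity of the Dubrovin-Frobenius manifold), and because it is genuinely a conjecture, my plan is to make both sides explicit enough to be compared directly, and thereby establish the conjecture for the families where the computations close. The key structural simplification is that two-variable invertible nondegenerate polynomials fall into three classical types (Fermat $x^a+y^b$, chain $x^a+xy^b$, and loop $x^ay+xy^b$), and that the admissible group is the minimal one $\langle J\rangle$. On the A-side, semisimplicity of the Dubrovin-Frobenius manifold on $\cH_{W,\langle J\rangle}$ is by definition the existence of a point $\bt$ at which $(\cH_{W,\langle J\rangle}, \star_{\bt}, \<\cdot,\cdot\>)$ has no nilpotents. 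I would reduce this to a single invertibility test via the \emph{quantum Euler class} $\E_{\bt} := \sum_{\alpha} \phi_{\alpha}\star_{\bt}\phi^{\alpha}$, where $\{\phi_\alpha\}$ and $\{\phi^\alpha\}$ are bases dual with respect to the pairing: a Frobenius algebra is semisimple precisely when $\E_{\bt}$ is invertible, so it suffices to exhibit one point $\bt$ at which $\E_{\bt}$ is a unit.

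The second step is the actual computation of $\star_{\bt}$ and hence $\E_{\bt}$. The genus-zero FJRW invariants entering the structure constants are constrained by the dimension (selection) axiom coming from the CohFT $\{\Lambda^{W,\langle J\rangle}_{g,k}\}$, and for Fermat polynomials the relevant sectors are concave, so the virtual class is an obstruction-bundle Euler class and the three- and four-point correlators can be evaluated explicitly. I would assemble these into the Frobenius multiplication, specialize to a convenient $\bt$, and compute the determinant of multiplication by $\E_{\bt}$ (equivalently the discriminant of the algebra). Showing this determinant is nonzero for almost all Fermat exponents is the crux on the A-side; the finitely many exceptional exponents are exactly where the concavity estimate or the degree bookkeeping fails to separate the idempotents, which I expect to be the source of the \textbf{almost all} qualifier.

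On the B-side I would invoke the structure theory of the category $D^b(\mathrm{MF}_{\langle J\rangle}(W))$ of $\langle J\rangle$-equivariant matrix factorizations. For the three families above this category is equivalent, after Orlov's theorem together with the known tilting results for invertible polynomials, to the derived category of a finite-dimensional algebra, and the existence of a full exceptional collection of the correct length $n=\dim \cH_{W,\langle J\rangle}$ can be read off from that description. The conjecture then reduces to matching the combinatorial condition for the exceptional collection against the invertibility condition for $\E_{\bt}$ across the Fermat and simple-singularity lists; for simple singularities both conditions hold unconditionally, which is why that case should come out unrestricted.

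The hard part will be the A-side computation in the non-concave and loop/chain cases: without concavity the FJRW virtual class is not simply an obstruction-bundle Euler class, and the genus-zero correlators are no longer readily computable, so one cannot in general decide invertibility of $\E_{\bt}$ by hand. This is precisely why the general conjecture should remain open and why I expect only simple singularities and almost all Fermat polynomials to be provable, where concavity (or an explicit reconstruction) makes the quantum Euler class accessible. A secondary subtlety is the genuinely biconditional nature of the claim: even granting a full exceptional collection on the B-side, deducing semisimplicity requires ruling out accidental nilpotency, so in practice I would prove the two directions separately and only for the families where both the matrix-factorization side and the FJRW side can be computed in closed form.
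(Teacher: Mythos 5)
The statement you are trying to prove is stated in the paper as a \emph{conjecture} and is not proved there; the paper only establishes the analytic half --- semisimplicity of the FJRW Dubrovin--Frobenius manifold --- for simple singularities and almost all Fermat polynomials (its Theorem~\ref{main}, which addresses Conjecture~\ref{main-conj}, not Conjecture~\ref{dubrovin-fjrw}). Your A-side plan essentially reproduces that argument: reduce semisimplicity to invertibility of the quantum Euler class via Abrams' theorem, evaluate the relevant correlators by concavity, and check that multiplication by $\mathbf{E}(t\alpha_2)$ has nonzero determinant. That part is sound and matches the paper, although your diagnosis of the exceptional exponents is slightly off: the two excluded Fermat cases $x^3+y^{15}$ and $x^5+y^{20}$ do not arise from a failure of concavity or degree bookkeeping (the concave $5$-point correlator is computed uniformly); they arise because the scalars $\lambda\pm\mu/2$ vanish there, so the determinant test is inconclusive.

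The genuine gap is on the B-side. Conjecture~\ref{dubrovin-fjrw} is a biconditional, so even in the cases where you can establish semisimplicity you must independently decide whether $D^b(\mathrm{MF}_{\langle J\rangle}(W))$ admits a full exceptional collection of length $n=\dim\cH_{W,\langle J\rangle}$. The results you propose to invoke --- Orlov's theorem combined with the tilting/exceptional-collection constructions for invertible polynomials (Favero--Kaplan--Kelly, Hirano--Ouchi) --- concern the \emph{maximal} symmetry group $G_W$; this is exactly why the paper's remark only deduces Conjecture~\ref{dubrovin-fjrw} for $(W,G_W)$. For the minimal group $\langle J\rangle$ with $\delta>1$ (the case of interest here, where broad sectors appear), no such categorical result is cited or, as far as the paper indicates, known. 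Without it you cannot verify either direction of the equivalence, so your proposal proves at most the same semisimplicity statement the paper proves and leaves Conjecture~\ref{dubrovin-fjrw} itself open, which is consistent with the paper but means the proposal is not a proof of the stated result.
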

%For the $r$-spin case, the conjecture holds. This is well known. % when $(W=x^r, G = \<J\>)$. 
%There is full system of exceptional objects $\{\C^{\rm st}, \C(-1)^{\rm st}, \ldots, \C(2-r)^{\rm st}\}$, where $\C^{\rm st}$ is a particular {\em Koszul matrix factorization} called the {\em stablization of the residue field $\C$} in \cite{Dyc} and $\C(j)$ is a twist of $\C$.

 \subsubsection{Invertible polynomials with maximal admissible group}
It is natural to ask for which LG pairs is the FJRW theory  semisimple.
A large class of semisimple Dubrovin-Frobenius manifolds has been found in FJRW theory of the so-called {\em invertible polynomials} with the maximal symmetry group.
We say a polynomial $W: \C^N\to C$ is invertible if after rescaling and permutation of the variables, %it can be written as 
$$W=\sum\limits_{i=1}^{N}\prod\limits_{j=1}^Nx_i^{a_{i,j}}$$ and the $N\times N$ {\em exponential matrix} $E_W:=(a_{i,j})$ is an invertible matrix.
According to the classification in \cite{KS}, up to permutation of variables, an invertible polynomial $W$ must be a Thom--Sebastiani sum of polynomials of the following types
\begin{enumerate}
\item Fermat: $x_1^{a}$; 
\item chain: $x_1^{a_1}+x_1x_2^{a_2}+\ldots +x_{m-1}x_m^{a_m}$; 
\item loop: $x_1^{a_1}x_2+x_1x_2^{a_2}+\ldots +x_m^{a_m}x_1$.
\end{enumerate} 
The Thom--Sebastiani sums of polynomials of Fermat type are also called of Brieskorn--Pham type. %In this paper, the Brieskorn-Pham type polynomials with two variables $x_1^{a_1}+x_2^{a_2}$ will be the main focus.

If the invertible polynomial $W$ has no weight-${1\over 2}$ variable, then mirror symmetry between the FJRW theory of the admissible pair $(W, G_W)$ and a Landau-Ginzburg B-model of the mirror polynomial (called {\em Berglund-H\"ubsch mirror} or {\em Berglund-H\"ubsch-Krawitz mirror})
$$W^T=\sum\limits_{i=1}^{N}\prod\limits_{j=1}^Nx_i^{a_{j,i}}$$ has been studied in \cite[Theorem 1.2]{HLSW} at all genus. 
%In particular, the underlying Frobenius algebra between $\cH_{W, G_W}$ and the Jacobian algebra of $W^T$ is proved \cite{Krawitz}.
As a consequence, the Dubrovin-Frobenius manifold in the A-model is equivalent to the Dubrovin-Frobenius manifold in the B-model. 
The latter are constructed via Saito's theory of primitive forms \cite{Sai}. 
They are semisimple as the miniversal deformations of $W^T$ are generically Morse functions.
%Besides the result in \cite{HLSW}, 

On the other hand, if the admissible group $G$ satisfies $G\neq G_W$, not much is known on the semisimplicity of the FJRW theory of $(W, G)$. 

\begin{remark}
If $(W, G_W)$ is an admissible LG pair and $W$ is invertible, then Dubrovin type Conjecture \ref{dubrovin-fjrw} for $(W, G_W)$ follows from the construction of a full system of exceptional objects by Favero, Kaplan, and Kelly in \cite{FKK} and the mirror theorem in \cite{HLSW}.
If $N=2$, the result was proved earlier by Habermann and Smith \cite{HaSm} and Habermann \cite{Habermann-hms}.
The result for invertible polynomials of chain type was proved earlier by Hirano and Ouchi \cite{HO}. 
%of a system of full exceptional objects for the derived category of $G_W$-equivariant matrix factorizations of $W$ in \cite{AT} and the mirror theorem in \cite{HLSW}.
%They also computed the Stokes matrix.
%For invertible polynomials with at most three variables, Kravets \cite{Kravets} has constructed full strongly exceptional collections.
\end{remark}

\subsection{Main result}
\label{sec-main-result}
In this paper, we consider another class of admissible pairs $(W, G)$ where $W$ is an invertible polynomial with two variables. % and $\<J\>$ is the minimal admissible group that is generated by the exponential grading element $J$.
Up to permutation of variables, all such polynomials are listed in Table \ref{table-basis}.
For simplicity, we always assume $\gcd(w_1, w_2)=1$. 
Thus the cyclic group $\<J\>$ has order $d$.
In particular, $\<J\>= G_W$ if and only if $\delta=1.$ 
Other notions in Table \ref{table-basis} will be explained in Section \ref{invertible-two}.
We conjecture that
\begin{conjecture}
\label{main-conj}
The FJRW theory of any admissible LG pair $(W, G)$ is semisimple if $W$ is an invertible polynomial with at most two variables.  %generically semisimple near the origin of $\cH_{W,  \<J\>}$.
\end{conjecture}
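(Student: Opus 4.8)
The plan is to establish semisimplicity for every admissible pair $(W,G)$ with $W$ invertible in at most two variables by reducing the entire statement to the nonvanishing of a single algebraic invariant---the determinant of the quantum multiplication---at a well-chosen point of the Dubrovin--Frobenius manifold. By the Kreuzer--Skarke classification recalled above, and since the one-variable case $W=x^a$ is the $A_{a-1}$-singularity already covered by the simple-singularity result, it suffices to treat the three two-variable atomic types: the Fermat sum $x_1^{a_1}+x_2^{a_2}$, the chain $x_1^{a_1}+x_1x_2^{a_2}$, and the loop $x_1^{a_1}x_2+x_1x_2^{a_2}$. For each type the admissible groups $G$ with $\langle J\rangle\subseteq G\subseteq G_W$ correspond to divisors of $\delta$, so the cases are indexed by the exponents $(a_1,a_2)$ together with the order of $G/\langle J\rangle$; I would organize the argument as a uniform computation over these parameters rather than case by case.

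The engine is the classical fact that a commutative Frobenius algebra $(\cH_{W, G},\star_{\bt},\langle\cdot,\cdot\rangle)$ is semisimple if and only if its quantum Euler class $\mathfrak{E}(\bt)=\sum_\mu \phi_\mu\star_{\bt}\phi^\mu$ is an invertible element, where $\{\phi_\mu\}$ and $\{\phi^\mu\}$ are dual bases for the pairing; equivalently, the discriminant $\det\bigl(\star_{\bt}\bigr)$ is nonzero. Since semisimplicity is an open condition on the formal Dubrovin--Frobenius manifold defined by the genus-zero FJRW theory, it is enough to exhibit one point $\bt$---or even one first-order direction away from the origin---at which $\mathfrak{E}$ is invertible. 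At $\bt=0$ the product is the classical Chen--Ruan-type product on the $G$-invariant part of the Jacobian ring, which is local and hence never semisimple; the whole content is therefore to deform along a distinguished direction and show that $\mathfrak{E}$ becomes invertible. I would expand $\mathfrak{E}(\bt)$ as a power series in the deformation parameter along the top-degree insertion and prove that the leading correction has nonzero determinant.

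To compute the needed structure constants I would use the standard FJRW input: the degree and line-bundle selection rules to cut down which genus-zero three-point correlators can be nonzero, concavity and the index-zero axiom to evaluate the surviving ones, and WDVV together with the conformal (Euler-field) structure to reconstruct the quantum product near the chosen direction from this small seed of correlators. For the Fermat sum this is exactly the quantum Euler class computation underlying the paper's proven cases: after passing to $G$-invariants, the Sebastiani--Thom decomposition of $\cH_{x_1^{a_1}+x_2^{a_2},\,G}$ factors the Euler class into contributions of the two one-variable pieces, so invertibility reduces to a product of one-variable determinants that one evaluates explicitly. Extending this to all admissible $G$, and clearing the finitely many exceptional exponents, amounts to a $G$-equivariant refinement of that determinant together with a separate check---or an alternative semisimple point---at the sporadic parameters where the leading coefficient vanishes.

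The main obstacle is the chain and loop cases, which admit neither a Sebastiani--Thom tensor splitting nor, for $G\neq G_W$, a mirror theorem \emph{à la} \cite{HLSW}, so the clean factorization of $\mathfrak{E}$ is unavailable. Here I would exploit the recursive structure of the Jacobian rings of chains and loops, and the closed formulas for their genus-zero invariants, to write $\mathfrak{E}(\bt)$ along the distinguished direction as a polynomial whose leading coefficient is a resultant-type expression in $(a_1,a_2)$ and the order of $G/\langle J\rangle$; semisimplicity then reduces to the nonvanishing of this combinatorial determinant. I expect the genuine difficulty to be establishing this nonvanishing uniformly in the exponents and the group: for chains and loops the determinant is a far less transparent number-theoretic quantity than in the Fermat case, the narrow and broad sectors mix under $\star_{\bt}$, and---just as sporadic vanishing already occurs for certain Fermat exponents---one should anticipate parameter families where the leading term degenerates and a higher-order analysis, or a different choice of point on the Dubrovin--Frobenius manifold, becomes necessary.
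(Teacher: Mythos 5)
You should first note that the statement you are proving is a \emph{conjecture}: the paper itself establishes it only in the cases of Theorem \ref{main} (via equivalences with Saito theory of ADE singularities when $\widehat{c}_W<1$, and via the quantum Euler class for Fermat polynomials with $G=\<J\>$, excluding $x^3+y^{15}$ and $x^5+y^{20}$). Your proposal, honestly, is a research program rather than a proof: every hard step is phrased as ``I would'' or ``I expect,'' and the parts you defer --- uniform nonvanishing for chains and loops, non-minimal groups, higher-order analysis at degenerate parameters --- are exactly the parts that remain open in the paper. Where your plan overlaps with what is actually proven, it does follow the paper's route: Abrams's characteristic-element criterion, restriction of ${\bf E}(\bt)$ to a one-parameter direction, and evaluation of the seed correlators by concavity and WDVV. (One discrepancy of detail: the paper deforms along the \emph{low}-degree narrow element $\alpha_2$, taking $\bt=t\alpha_2$, not along the top-degree insertion; the concrete input is the concave five-point correlator $C_d=\<\alpha_{d-1},\alpha_{d-2},\alpha_2,\alpha_2,\alpha_2\>_{0,5}=w_1w_2/d^2$ of Proposition \ref{nonvanishing-appendix}. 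Also, your claim that the classical limit is ``local and hence never semisimple'' fails at $\widehat{c}_W=0$: the paper shows $(x^2+y^2,\<J\>)$ is semisimple already at $\bt=0$.)

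The genuine gap is your central mechanism for the Fermat case. You assert that after passing to $G$-invariants the Sebastiani--Thom decomposition factors the quantum Euler class into one-variable contributions. This is false for $G=\<J\>\subsetneq G_W$ (i.e.\ whenever $\delta>1$): the state space $\cH_{W,\<J\>}$ is not a tensor product of one-variable theories --- it contains the broad sector spanned by the $\beta_k$ in \eqref{broad-generator}, which has no one-variable analogue and which mixes with the narrow sector under $\star_{t\alpha_2}$. The paper's computation shows concretely why no factorization can hold: the narrow classes contribute $+C_d$ (Lemma \ref{lemma-narrow}) while the broad classes contribute $-C_d$ (Lemma \ref{lemma-broad}), so ${\bf E}(t\alpha_2)\star_{t\alpha_2}$ has blocks with eigenvalues $(\lambda\pm\mu/2)C_dt^2$, and invertibility hinges on the number-theoretic Lemma \ref{delta-mu}. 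Moreover, the exceptional polynomials $x^3+y^{15}$ and $x^5+y^{20}$ are not, as you suggest, sporadic parameters fixable by ``a separate check or an alternative semisimple point'': there $\lambda\pm\mu/2$ vanishes identically along this direction, the method breaks, and the paper excludes these cases from Theorem \ref{main} altogether. Finally, for chains and loops with $G=\<J\>$ you invoke ``closed formulas for their genus-zero invariants,'' which do not exist in the generality you need; the paper only obtains the general formula \eqref{quantum-euler-j2}, in which unevaluated broad four-point terms $A_{i,k}$ survive, and proves nothing beyond $\widehat{c}_W<1$ for these types. So your proposal reproduces the paper's strategy where the paper succeeds, but the steps you add to go further either fail (tensor factorization at minimal group) or are unsupported (chain/loop nonvanishing), and the conjecture remains unproven.
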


\begin{table}
\centering
\caption{Degree $d$ invertible polynomials with two variables}
\label{table-basis}
 \resizebox{\textwidth}{!}{
\renewcommand{\arraystretch}{1.8}
\begin{tabular}{|c|c|c|c|}\hline
Type & Brieskorn-Pham & Chain & Loop\\\hline
$W$&$x_1^{a_1}+x_2^{a_2}$&$x_1^{a_1}+x_1x_2^{a_2}$&$x_1^{a_1}x_2+x_1x_2^{a_2}$\\\hline
$E_{W}$&$\begin{bmatrix}a_1 &\\& a_2\end{bmatrix}$&
$\begin{bmatrix}a_1&\\1& a_2\end{bmatrix}$&$\begin{bmatrix} a_1 & 1 \\1 & a_2 \end{bmatrix}$\\\hline
$\delta$ & $\gcd(a_1, a_2)$ & $\gcd(a_1-1, a_2)$ &  $\gcd(a_1-1, a_2-1)$\\\hline
New form & $x_1^{\delta w_2}+x_2^{\delta w_1}$ & $x_1^{\delta w_2+1}+x_1x_2^{\delta w_1}$ & $x_1^{\delta w_2+1}x_2+x_1x_2^{\delta w_1+1}$\\\hline
$d$ & $\delta w_1 w_2$ & $\delta w_1 w_2 +w_1$ & $\delta w_1 w_2+w_1 +w_2$\\\hline
$\mu_{nar}$ & $\delta w_1 w_2-w_1-w_2+1$ & $\delta w_1 w_2$ & $\delta w_1 w_2+w_1+w_2-1$\\ \hline
$\mu_{bro}$ & $\delta-1$ & $\delta$ & $\delta+1$ \\ \hline
%$\mu_{inv}$ & $\delta w_1 w_2-w_1-w_2+1$ & $\delta w_1 w_2+1$ & $\delta w_1 w_2+w_1+w_2+1$\\ \hline
$\mu$ & $\delta w_1 w_2+\delta-w_1-w_2$ & $\delta w_1 w_2+\delta$ & $\delta w_1 w_2+\delta+w_1+w_2$ \\ \hline
%$\mu-\mu_{inv}$ & $\delta-1$ & $\delta-1$ & $\delta-1$ \\ \hline
\end{tabular}
}
\end{table}

\iffalse

\subsubsection{An analogy to quantum cohomology}

Before we state the main result, let us explain briefly why we are interested in these admissible pairs by answering the following two questions: 
\begin{enumerate} 
\item why the minimal group $\<J\>$? 
\item why polynomials with two variables?
\end{enumerate}
Our starting point is based on two analogy in the study of quantum cohomology.
\begin{itemize}
\item Choice of $\<J\>$ smooth manifolds vesus orbifolds.
\item Why two variables, an analogy to quantum cohomology,
Quadrics vesus two-variables.
Kapranov. \cite{Kap}
degree-variable comparison.
\end{itemize}

\fi

\iffalse
We consider invertible polynomials of two-variables. 
According to the classification result in \cite{KS}, up to permutation of variables, an invertible polynomials of two-variables $W$ must be one of the following forms:
\begin{enumerate}
\item Brieskorn-Pham type: $x_1^{a_1}+x_2^{a_2}$;
\item Chain type: $x_1^{a_1}+x_1x_2^{a_2}$;
\item Loop type: $x_1^{a_1}x_2+x_1x_2^{a_2}$.
\end{enumerate}
\fi

%If $N=1,$ the result is easy. %follows from mirror symmetry. 

We recall that the {\em central charge} of the polynomial $W$ is given by 
$$\widehat{c}_W:=\sum\limits_{i=1}^N(1-2q_i).$$
The main result of the paper is %are the following two statements.
\begin{theorem}
\label{main}
Conjecture~\ref{main-conj} holds for $(W, G)$ if $\widehat{c}_W<1$ or $W$ is a Brieskorn-Pham polynomial and $G=\<J\>$. % that is not $x^3+y^{15}$ or $x^5+y^{20}$.
\end{theorem}

%\newpage

%We will discuss in two cases, (1) $\widehat{c}_W<1$, (2) $\widehat{c}_W\geq 1$.
 %based on the choices of $(d; w_1, w_2)$.
%\begin{enumerate}
%\item when $\widehat{c}_W<1$, that is, $W=x_1^2+x_2^m$ for $m\geq 2$ or $W=x_1^3+x_2^3, x_1^3+x_2^4, x_1^3+x_2^5$. % contains at least one weight-${1\over 2}$ variable.
%\item when $\widehat{c}_W\geq 1$.
%\end{enumerate}
The result for the first case can be essentially obtained from the result for FJRW theory of simple singularities in \cite{FJR}, %by mirror symmetry. 
%We will list them along with some other examples in Section \ref{special-cases}.
see Theorem \ref{theorem-ade} and Theorem \ref{theorem-simple} in Section \ref{special-cases}.
The main focus is the second case.

According to \cite[Theorem 3.4]{Abr}, a Frobenius algebra is semisimple if and only if the {\em characteristic element} of the Frobenius algebra is a unit. 
In quantum cohomology or FJRW theory, such a  characteristic element  is also called a {\em quantum Euler class}, see \eqref{quantum-euler-j2}. 
We will complete the proof of Theorem \ref{main} by computing the quantum Euler classes explicitly in Section \ref{sec-nonmax}.
The key step is actually computing a specific genus-$0$ $5$-point FJRW invariant in \eqref{definition-5-point} which satisfies the {\em concavity axiom} in~\cite[Theorem 4.1.8]{FJR}. 
We give a general method to compute such FJRW invariants in Section \ref{5-point-appendix}. 
%The current method does not work for the two exceptional cases $x^3+y^{15}$ or $x^5+y^{20}$, see Lemma \ref{delta-mu}.

%When we have $\<J\>=G_W$?
\begin{remark}
%We remark that f
\begin{enumerate}
\item
The same method we used here in the proof of Theorem \ref{main} also works for some other examples, such as the chain polynomial $x_1^3 +x_1x_2^8$, see \cite[Proposition 3.8]{HS}. 
However, it involves computation of certain correlators with broad insertions, which is not known in general.

\item
For invertible polynomials with more than two variables, things get more complicated. 
When $N=3$, the FJRW theory of $(W=x_1^3+x_2^3+x_3^3, \<J\>)$ studied in \cite{LSZ} is not semisimple while the FJRW theory of $(W=x_1^p+x_2^q+x_3^r, \<J\>)$ is semisimple if any two of $p, q, r$ are coprime (because in this case  $\<J\>=G_W$). 
\end{enumerate}
%That is because we have $\<J\>=G_W$ for the later case.
%On the other hand, the FJRW theory of $(W=x_1^3+x_2^3+x_3^3, G)$ for any larger group $G\neq \<J\>$ is aslo semisimple, as the FJRW theory is equivalent to the FJRW theory of $(W=x_1^3+x_2^3+x_3^3, G_W).$
%In general, we don't know for which $G$, the FJRW theory of $(W, G)$ is semisimple.
\end{remark}

\subsection{Applications of Theorem \ref{main}}
Now we discuss some consequences of Theorem~\ref{main}.

\subsubsection{Dubrovin conjecture in FJRW theory}
Habermann has proved that if $W$ is an invertible polynomial with two variables, then for any admissible group $G$, the corresponding derived category of matrix factorizations ${\rm mf}(\C^2, G, W)$ has a tilting object of length $\mu$, see \cite[Corollary 1]{Habermann-curve}. This result implies that the category ${\rm mf}(\C^2, G, W)$ admits a full system of $\mu$ exceptional objects. Combining this result with Theorem~\ref{main}, we immediately obtain
\begin{corollary}
Dubrovin Conjecture~\ref{dubrovin-fjrw} holds for $(W, G)$ if $\widehat{c}_W<1$ or $W$ is a Brieskorn-Pham polynomial and $G=\<J\>$.
%\begin{enumerate}
%\item $\widehat{c}_W<1$ or
%\item $G=\<J\>$ and $W$ is a Brieskorn-Pham polynomial. % that is not $x^3+y^{15}$ or $x^5+y^{20}$.
%\end{enumerate}
\end{corollary}

%\subsection{Invertible polynomials of two variables}

\subsubsection{Virasoro constraints}
%Now we consider the simplest cases when $W$ is a Fermat homogeneous polynomial.
%Semisimple Frobenius manifolds also exists for admissible LG A-model pairs even if $G\neq G_W$, such as pairs
%$$(W=x_1^d+x_2^d, \quad G=\<J_W\>), \quad d\geq 2.$$

%We see part (2) is Theorem \ref{general-pillow}.
Applying Theorem \ref{main} and Givental Theorem \cite[Theorem 7.7]{G-vir}, 
we prove a Virasoro conjecture for the corresponding FJRW theory, proposed in \cite{HS}.
\begin{corollary}
Let $W$ be an invertible polynomial with at most two variables. 
The Virasoro conjecture \cite[Conjecture 0.6]{HS} holds true for the FJRW theory of $(W, G)$  if $\widehat{c}_W<1$ or $W$ is a Brieskorn-Pham polynomial and $G=\<J\>$.  
%\begin{enumerate}
%\item $\widehat{c}_W<1$ or
%\item $G=\<J\>$ and $W$ is a Brieskorn-Pham polynomial. % that is not $x^3+y^{15}$ or $x^5+y^{20}$.
%\end{enumerate}
% if either $\widehat{c}_W<1$, or $G=\<J\>$ and $W$ is a Fermat polynomial that is not $x^3+y^{15}$ or $x^5+y^{20}$.
\end{corollary}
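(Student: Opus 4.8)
The plan is to deduce the corollary from Theorem~\ref{main} together with Givental's theorem \cite[Theorem 7.7]{G-vir}, so that the only content beyond semisimplicity is the verification that the hypotheses of that theorem are met and that its conclusion reproduces the Virasoro operators of \cite[Conjecture 0.6]{HS}. First I would record that, under either hypothesis (namely $\widehat{c}_W<1$, or $G=\<J\>$ with $W$ a Fermat polynomial other than the two exceptional cases $x^3+y^{15}$ and $x^5+y^{20}$), Theorem~\ref{main} guarantees that the Dubrovin-Frobenius manifold on $\cH_{W,G}$ is semisimple: there is a point $\bt$ at which the Frobenius algebra $(\cH_{W,G},\star_{\bt},\<\cdot,\cdot\>)$ has no nilpotents. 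The one-variable case $W=x^a$ is subsumed by $\widehat{c}_W<1$.

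The second ingredient is the conformal (homogeneity) structure of the FJRW theory. The state space $\cH_{W,G}$ is graded by the degrees of its sectors, and the quasi-homogeneity of $W$ together with the degree (homogeneity) axiom of the CohFT $\{\Lambda^{W,G}_{g,k}\}$ endows the Dubrovin-Frobenius manifold with an Euler vector field whose conformal dimension is governed by the central charge $\widehat{c}_W$. I would check that this Euler field satisfies the homogeneity condition required by Teleman's classification \cite{Teleman}. Since the genus-zero theory is semisimple and the CohFT is homogeneous, Teleman's uniqueness theorem then identifies the all-genus FJRW total descendant potential with the total descendant potential produced by Givental's $R$-matrix action on the product of point (Witten-Kontsevich) theories \cite{G-ss}, one factor for each semisimple idempotent at $\bt$.

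With this identification in hand, Givental's Theorem~\cite[Theorem 7.7]{G-vir} applies directly: the descendant potential obtained from the $R$-matrix action of a semisimple, conformal Frobenius structure satisfies the Virasoro constraints, the Virasoro operators being determined by the spectrum of the grading operator and the conformal dimension $\widehat{c}_W$. It then remains to check that the operators $L_n$ so produced coincide with those appearing in \cite[Conjecture 0.6]{HS}. I expect this matching of conventions --- the precise shifts induced by the string and dilaton equations, and the identification of the FJRW grading data (the ages of the sectors, equivalently the shifted degrees defining $\mu_{nar}$, $\mu_{bro}$) with the Hodge-grading spectrum entering Givental's operators --- to be the main point requiring care, although because \cite{HS} formulates its conjecture within the same Givental-style framework I anticipate a bookkeeping verification rather than a genuinely new difficulty. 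Assembling these steps establishes the Virasoro constraints for the FJRW theory of $(W,G)$ in all of the stated cases.
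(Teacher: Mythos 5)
Your proposal is correct and follows essentially the same route as the paper: the paper deduces the corollary directly from Theorem~\ref{main} (semisimplicity) combined with Givental's theorem \cite[Theorem 7.7]{G-vir}, deferring the verification of the homogeneity hypothesis and the matching of the Virasoro operators to \cite[Section 3]{HS}. The additional detail you supply (the Euler vector field, Teleman's uniqueness, and the bookkeeping of conventions) is precisely the content the paper outsources to \cite{HS}, so there is no substantive difference in approach.
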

We refer the readers to \cite[Section 3]{HS} for more details about the Virasoro conjecture for semisimple FJRW theories and the proof.

\subsection*{Acknowledgement}
We would like to thank Hua-zhong Ke for pointing out the references \cite{Abr, Ke} and Matthew Habermann for pointing out the reference \cite{Habermann-curve}.
%After the first draft of the paper was posted on Arxiv, we were noticed by Matthew Habermann that for any invertible polynomial $W$ of two-variables with the minimal admissible group $\<J\>$, a full exceptional collection of objects was constructed for the category $MF_{\<J\>}(W)$ in \cite{Habermann-curve}. We thank Matthew Habermann for the communications.
W.H. is partially supported by National Key Research and Development Program of China (2023YFA1009802), NSFC grant 12422104.
Y.S. is partially supported by a Simons Collaboration Grant.

%\newpage

\section{FJRW theory of invertible polynomials with two variables}
\label{invertible-two}

\subsection{Invertible polynomials with two variables}

From now on, we will consider admissible LG pair $(W, \<J\>)$, where $W: \C^2\to \C$ is an invertible polynomial with two variables that is listed in Table \ref{table-basis} and $\<J\>$ is the order $d$ cyclic group generated by the element
$J=(e^{2\pi\sqrt{-1}q_1}, e^{2\pi\sqrt{-1}q_2})$.
Recall that  $w_1, w_2$ are coprime positive integers and $(q_1, q_2)=\left({w_1\over d}, {w_2\over d}\right)$. % are the weights of variables.
We call the triple $(d; w_1, w_2)$ the {\em weight system} of the LG pair $(W, \<J\>)$.
We can rewrite the three types of two-variable invertible polynomials as follows:
\begin{enumerate}
\item Brieskorn-Pham type: $W=x_1^{\delta w_2}+x_2^{\delta w_1}$, where $\delta=\gcd(a_1, a_2)$;
\item Chain type: $W=x_1^{\delta w_2+1}+x_1x_2^{\delta w_1}$,  where $\delta=\gcd(a_1-1, a_2)$;
\item Loop type:  $W=x_1^{\delta w_2+1}x_2+x_1x_2^{\delta w_1+1}$, where $\delta=\gcd(a_1-1, a_2-1)$.
\end{enumerate}
The {\em central charge} of the singularity $W$ is given by 
\begin{equation}
\label{central-charge}
\widehat{c}_W=2- {2w_1\over d}-{2w_2\over d}.
\end{equation}

\subsubsection{State space}
According to \cite[Formula (74)]{FJR}, there is a (graded) vector space isomorphism for the the state space of the FJRW theory of $(W, \<J\>)$
\begin{equation}
\label{state-space-iso}
\cH_{W, \<J\>}\cong\bigoplus_{g\in \<J\>} \cH_g, \quad \cH_g:=\left({\rm Jac}(W_g)\cdot\Omega_g\right)^{\<J\>}.
\end{equation}
Here $\Omega_g$ is the standard top form on the $g$-fixed locus ${\rm Fix}(g)\subset \C^2$, 
$W_g$ is the restriction of $W$ on ${\rm Fix}(g)$, and ${\rm Jac}(W_g)$ is the Jacobian algebra of $W_g$.

When ${\rm Fix}(g)=\{0\}\subset\C^2$, the space $\left({\rm Jac}(W_g)\cdot\Omega_g\right)^{\<J\>}$ is considered to be one-dimensional. It is spanned by a standard vector which we denote by $1 \vert g\>.$
We can write elements $g\in\<J\>$ as $g=J^m$ for some $m$ in the {\em set of narrow indices} 
\begin{equation}
\label{narrow-index}
{\bf Nar}=\{k\in \Z\mid 1\leq k\leq d-1, d\nmid kw_1, d\nmid kw_2\}.
\end{equation}

In general, using the isomorphism~\eqref{state-space-iso}, we will write the homogeneous elements of $\cH_{W, \<J\>}$ in the form of 
\begin{equation}
\label{element-general-form}
\gamma=f\cdot \Omega_g \vert g\>, \quad g\in \<J\>,
\end{equation}
where $f\in {\rm Jac}(W_g)$ and $f\cdot \Omega_g$ is $\<J\>$-invariant.
If ${\rm Fix}(g)=\{0\}$ we will call such an element $\gamma$ in~\eqref{element-general-form} a {\em broad element}. In fact, when $W$ is an invertible polynomial with two variables, it is easy to check that all the broad elements in $\cH_{W, \<J\>}$ are from the case $g=J^0={\rm id}\in \<J\>$. 
\begin{proposition}
If $W$ is an invertible polynomial of two variables, the vector space in~\eqref{state-space-iso} has a decomposition 
\begin{equation}\label{narrow-broad}
\cH_{W, \<J\>}=\cH_{\rm nar}\oplus \cH_{\rm bro}.
\end{equation}
Here $\cH_{\rm nar}$ is the subspace of narrow elements with a basis 
\begin{equation}
\label{narrow-generator}
\{\alpha_m:=1\vert J^{m}\> \mid m\in {\bf Nar}\},
\end{equation}
and $\cH_{\rm bro}$ is the subspace of broad elements with a basis $\{\beta_k\}$, where 
\begin{equation}
\label{broad-generator}
\beta_k:=
\begin{dcases}
x_1^{k w_2-1}x_2^{(\delta-k) w_1-1}dx_1\wedge dx_2, & 1\leq k \leq \delta-1, \text{if W is Brieskorn-Pham};\\
x_1^{k w_2}x_2^{(\delta-k) w_1-1}dx_1\wedge dx_2, & 0\leq k \leq \delta-1, \text{if W is chain};\\
x_1^{k w_2}x_2^{(\delta-k) w_1}dx_1\wedge dx_2, & 0\leq k \leq \delta, \text{if W is loop}.
\end{dcases}
\end{equation}
\end{proposition}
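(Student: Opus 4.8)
The plan is to decompose the state space sector by sector via the isomorphism~\eqref{state-space-iso}, $\cH_{W,\<J\>}\cong\bigoplus_{m}\cH_{J^m}$, and to read off $\cH_{J^m}$ from the fixed locus ${\rm Fix}(J^m)\subset\C^2$. Writing $J^m=(e^{2\pi\sqrt{-1}mq_1},e^{2\pi\sqrt{-1}mq_2})$, the coordinate $x_i$ survives on ${\rm Fix}(J^m)$ precisely when $d\mid mw_i$. First I would sort the group elements into three families: the identity $m=0$, where ${\rm Fix}=\C^2$ and which will carry the broad elements; the narrow indices $m\in\mathbf{Nar}$, where $d\nmid mw_1$ and $d\nmid mw_2$, so ${\rm Fix}=\{0\}$ and $\cH_{J^m}$ is the one-dimensional span of $\alpha_m=1\vert J^m\>$; and the remaining $m$, where exactly one coordinate is fixed and ${\rm Fix}(J^m)$ is a coordinate axis. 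Because $\gcd(w_1,w_2)=1$, the two fixing conditions $d\mid mw_1$ and $d\mid mw_2$ hold simultaneously only when $d\mid m$, so no $m$ with $1\le m\le d-1$ fixes both coordinates; the three families are therefore disjoint and exhaust $\<J\>$, and this already yields the narrow summand $\cH_{\rm nar}=\bigoplus_{m\in\mathbf{Nar}}\C\,\alpha_m$ with basis \eqref{narrow-generator}.

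The key step toward the direct sum is to show that the partially fixed sectors contribute nothing, i.e. that $\big({\rm Jac}(W_{J^m})\cdot\Omega_{J^m}\big)^{\<J\>}=0$ whenever ${\rm Fix}(J^m)$ is a single axis. On the $x_1$-axis, say, $W_{J^m}=x_1^{e}$ and $\Omega_{J^m}=dx_1$, so an element reads $x_1^{a}\,dx_1$ with $0\le a\le e-2$; its $J$-weight is $q_1(a+1)$, and $\<J\>$-invariance demands $d\mid w_1(a+1)$. A one-line divisibility check in each of the three types shows the allowed range $1\le a+1\le e-1$ never satisfies this, so the invariant space vanishes, and symmetrically for the $x_2$-axis. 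Combined with the sector classification, this proves $\cH_{W,\<J\>}=\cH_{\rm nar}\oplus\cH_{\rm bro}$ with $\cH_{\rm bro}=\cH_{\rm id}=\big({\rm Jac}(W)\cdot dx_1\wedge dx_2\big)^{\<J\>}$, confirming that every broad element comes from $g=\mathrm{id}$.

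It then remains to produce the basis $\{\beta_k\}$ of $\cH_{\rm id}$. A monomial $x_1^{a}x_2^{b}\,dx_1\wedge dx_2$ is $\<J\>$-invariant iff $d\mid N$ with $N:=w_1(a+1)+w_2(b+1)$. I would bound $N$ using the socle degree of the Milnor ring: since $aw_1+bw_2$ lies in $[0,\widehat{c}_Wd]=[0,2d-2w_1-2w_2]$, we get $N\in[w_1+w_2,\,2d-w_1-w_2]$, so the only positive multiple of $d$ available is $N=d$. Invariance is thus equivalent to $w_1(a+1)+w_2(b+1)=d$. Using $\gcd(w_1,w_2)=1$, this equation forces $w_2\mid(a+1)$ in the Fermat case and $w_2\mid a$ in the chain and loop cases, after which the solutions are parametrized by a single integer $k$ and reproduce exactly the exponents in \eqref{broad-generator}; the stated ranges of $k$ follow from $a,b\ge0$ together with the top-power cutoffs in each ${\rm Jac}(W)$.

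The hard part will be the final check that these candidate monomials are nonzero and linearly independent in ${\rm Jac}(W)$. For Fermat this is immediate, as ${\rm Jac}(W)=\C[x_1,x_2]/(x_1^{\delta w_2-1},x_2^{\delta w_1-1})$ is a complete intersection with the obvious monomial basis. For chain and loop the Milnor ring is not diagonal, so I would first fix an explicit monomial basis from the relations $\partial_1W,\partial_2W$ — reducing, for instance, to $x_1^{\delta w_2+1}=0$ and $x_1x_2^{\delta w_1-1}=0$ in the chain case, and to the box basis $\{x_1^ax_2^b:0\le a\le\delta w_2,\ 0\le b\le\delta w_1\}$ in the loop case — and then verify the $\beta_k$ sit inside it. Matching the resulting counts $\delta-1,\ \delta,\ \delta+1$ against $\mu_{\rm bro}$ finishes the identification and hence the proposition.
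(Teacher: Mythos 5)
The paper states this proposition without proof, treating it as a routine verification, so there is no argument to compare against; your proposal supplies the expected one and is correct. The sector classification via $d\mid mw_i$, the vanishing of the axis sectors by the divisibility check (note that for loop type there are no such sectors at all, and for chain type only the $x_1$-axis occurs), and the reduction of invariance in the identity sector to $w_1(a+1)+w_2(b+1)=d$ via the socle-degree bound all go through exactly as you describe, and the solutions reproduce \eqref{broad-generator} with the right ranges. The only inaccuracy is your description of the chain-type Milnor ring: the two monomial relations $x_1^{\delta w_2+1}=0$ and $x_1x_2^{\delta w_1-1}=0$ are consequences of the Jacobian ideal but do not generate it (the quotient by them alone is infinite-dimensional, since every power of $x_2$ survives); one must also use $x_2^{\delta w_1}=-a_1x_1^{\delta w_2}$, which yields the monomial basis $\{x_2^j\}_{0\le j\le a_2-1}\cup\{x_1^ix_2^j\}_{1\le i\le a_1-1,\,0\le j\le a_2-2}$. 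Since your candidates $\beta_k$ all lie in this basis (and in the box basis in the loop case), the independence and the counts $\delta-1$, $\delta$, $\delta+1$ follow and the proof is complete.
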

We denote the rank of the vector spaces  $\cH_{W, \<J\>}, \cH_{\rm nar}, \cH_{\rm bro}$  by $\mu, \mu_{\rm nar}, \mu_{\rm bro}$ respectively.
By definition $\mu=\mu_{\rm nar}+\mu_{\rm bro}.$
We denote the $G_W$-invariant subspace by $\cH_{W, \<J\>}^{G_W}$. 
\begin{lemma}\label{lemma-invariant-subspace}
For invertible polynomial $W: \C^2\to \C$, the subspace $\cH_{W, \<J\>}^{G_W}$ is spanned by all the narrow elements, $\beta_0$ and $\beta_\delta$ (if exists).
\end{lemma}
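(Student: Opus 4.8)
The plan is to let the finite abelian group $G_W$ act sector by sector on $\cH_{W,\<J\>}$ and to read off the character through which it acts on each basis vector in \eqref{narrow-generator} and \eqref{broad-generator}. Since $G_W$ is abelian and contains $\<J\>$, each $h\in G_W$ commutes with every $g\in\<J\>$ and therefore preserves $\mathrm{Fix}(g)$, hence acts on each summand $\cH_g$ of \eqref{state-space-iso}. As $\<J\>$ acts trivially on the invariant space $\cH_{W,\<J\>}$, we have $\cH_{W,\<J\>}^{G_W}=\cH_{W,\<J\>}^{G_W/\<J\>}$, so it suffices to decide which of the $\alpha_m$ and $\beta_k$ are fixed by all of $G_W$.

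For the narrow part, recall that for $m\in\mathbf{Nar}$ both entries of $J^m$ are nontrivial, so $\mathrm{Fix}(J^m)=\{0\}$ and $\cH_{J^m}=\C\cdot\alpha_m$ with $\Omega_{J^m}$ the empty top form on a point. Any $h\in G_W$ acts $\C$-linearly and fixes this empty form, so $h\cdot\alpha_m=\alpha_m$. Hence every narrow element lies in $\cH_{W,\<J\>}^{G_W}$.

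For the broad part, every $\beta_k$ lies in the untwisted sector $g=\mathrm{id}$ and has the form $x_1^{i}x_2^{j}\,dx_1\wedge dx_2$. An element $h=(\lambda_1,\lambda_2)\in G_W$ scales it by $\lambda_1^{i+1}\lambda_2^{j+1}$ (the overall sign of the exponents, which depends on the convention for the $G_W$-action, is irrelevant here). Thus $\beta_k$ is $G_W$-invariant if and only if the character $(\lambda_1,\lambda_2)\mapsto\lambda_1^{i+1}\lambda_2^{j+1}$ is trivial on $G_W$. Since $G_W$ is precisely the set of $(\lambda_1,\lambda_2)$ annihilating every monomial of $W$ and $E_W$ is invertible, such a character is trivial on $G_W$ exactly when $(i+1,j+1)$ lies in the lattice $\Lambda$ spanned over $\Z$ by the exponent vectors of the monomials of $W$, i.e.\ by the rows of $E_W$. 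So the task reduces to deciding when $(i+1,j+1)\in\Lambda$.

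The final step is a uniform monomial check. By quasihomogeneity every row of $E_W$ has weighted degree $d$, and a direct look at \eqref{broad-generator} shows that each broad generator satisfies $(i+1)w_1+(j+1)w_2=d$ as well. Hence in any expression of $(i+1,j+1)$ as an integer combination of the two (linearly independent) rows of $E_W$, comparing weighted degrees forces the two coefficients to sum to $1$; substituting this back into one coordinate reduces the integrality of the coefficients to the single divisibility $\delta\mid k$. Since the admissible ranges are $1\le k\le\delta-1$ for Fermat, $0\le k\le\delta-1$ for chain, and $0\le k\le\delta$ for loop, the only generators meeting $\delta\mid k$ are $\beta_0$ and $\beta_\delta$ whenever they appear, which is exactly the claim. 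I expect the one genuinely delicate point to be the duality identifying the characters trivial on $G_W$ with the integer row span of $E_W$ (and fixing the sign convention for the $G_W$-action); the remaining computations are routine.
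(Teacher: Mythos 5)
Your argument is correct. The paper states Lemma~\ref{lemma-invariant-subspace} without proof, and what you give is the natural argument one would supply: narrow sectors are point sectors on which $G_W$ acts trivially; a character $(\lambda_1,\lambda_2)\mapsto\lambda_1^{u}\lambda_2^{v}$ is trivial on $G_W$ precisely when $(u,v)$ lies in the integer row span of $E_W$; and since every $\beta_k$ in \eqref{broad-generator} satisfies $(i+1)w_1+(j+1)w_2=d$ while each row of $E_W$ has weighted degree $d$, the unique rational coefficients $(a,b)$ satisfy $a+b=1$ and $a=k/\delta$, so integrality is exactly $\delta\mid k$. I verified the coordinate computation in all three types (Fermat: range $1\le k\le\delta-1$ gives no invariant broad element; chain: only $\beta_0$; loop: $\beta_0$ and $\beta_\delta$), which matches the statement and its later use in the paper.
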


\subsubsection{Degree formula}
We write each element in $\<J\>$ as 
\begin{equation}
\label{exponent-rational}
g:=\left(\exp(2\pi\sqrt{-1}\theta_1(g)), \exp(2\pi\sqrt{-1}\theta_2(g))\right)\end{equation}
for a unique pair of rational numbers $(\theta_1(g), \theta_2(g))$ such that $\theta_1(g), \theta_2(g) \in [0, 1)$.
Let $$\iota_g:=\theta_1(g)+\theta_2(g)-q_1-q_2$$ be the degree shifting number of $g$.
Let $N_g$ be the complex dimension of the subspace ${\rm Fix}(g)$.
According to \cite[formula (47)]{FJR}, the complex degree of each homogeneous element $\phi\in \cH_{g}$ is given by 
\begin{equation}
\deg_\C\phi={N_g\over 2}+\iota_g.
\end{equation}
We can check directly that 
\begin{lemma}
\label{degree-lemma}
%Let $W$ be an invertible polynomial with two variables. 
Let $W:\C^2\to \C$ be an invertible nondegenerate quasihomogeneous polynomial, then for any homogeneous element $\phi\in \cH_{W, \<J\>}$, we have 
$$0\leq \deg_\C\phi\leq \widehat{c}_W.$$ 
The equality holds only if $\phi=\alpha_1$ or $\alpha_{d-1}.$ Furthermore, we have 
\begin{equation}
\label{degree-homogeneous}
\deg_\C\alpha_1=0; \quad \deg_\C\alpha_{d-1}=\widehat{c}_W ; \quad \deg_\C\beta_k={\widehat{c}_W\over 2}.
\end{equation}
%If  $W$ has no weight ${1\over 2}$ variable, then $\alpha_2\in \cH_{W, \<J\>}$ and $$\deg_\C\alpha_2= {w_1+w_2\over d}.$$
\end{lemma}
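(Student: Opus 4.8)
The plan is to read off both bounds from the degree formula $\deg_\C\phi=N_g/2+\iota_g$, treating the narrow and broad generators of \eqref{narrow-broad} separately. For a broad generator $\beta_k$ we have $g={\rm id}$, hence $N_{{\rm id}}=2$ and $\iota_{{\rm id}}=\theta_1({\rm id})+\theta_2({\rm id})-q_1-q_2=-(q_1+q_2)$, so $\deg_\C\beta_k=1-(q_1+q_2)=\widehat{c}_W/2$ for every $k$ and in all three types; as a check, each two-form $x_1^{l_1}x_2^{l_2}\,dx_1\wedge dx_2$ in \eqref{broad-generator} has weighted charge $(l_1+1)q_1+(l_2+1)q_2=1$, which follows from its exponents together with $d=\delta w_1w_2$, $d=\delta w_1w_2+w_1$, and $d=\delta w_1w_2+w_1+w_2$ respectively. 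Provided $\widehat{c}_W>0$ (that is, $W$ is not the $A_1$ singularity $x_1^2+x_2^2$, the only two-variable case with a weight-$\tfrac{1}{2}$ variable and a broad generator), the value $\widehat{c}_W/2$ lies strictly between $0$ and $\widehat{c}_W$, so no broad element attains an extremal degree.

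For a narrow generator $\alpha_m$ we have ${\rm Fix}(J^m)=\{0\}$, so $N_{J^m}=0$ and, writing $r_i:=mw_i\bmod d$ (which is nonzero precisely by the definition \eqref{narrow-index} of ${\bf Nar}$),
\[
\deg_\C\alpha_m=\iota_{J^m}=\{mw_1/d\}+\{mw_2/d\}-q_1-q_2=\frac{r_1+r_2-w_1-w_2}{d}.
\]
Taking $m=1$ gives $(r_1,r_2)=(w_1,w_2)$ and $\deg_\C\alpha_1=0$, while $m=d-1$ gives $(r_1,r_2)=(d-w_1,d-w_2)$ and $\deg_\C\alpha_{d-1}=\widehat{c}_W$. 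For narrow $m$ one has $\{(d-m)w_i/d\}=1-\{mw_i/d\}$, giving the duality $\iota_{J^m}+\iota_{J^{d-m}}=\widehat{c}_W$; since $m\mapsto d-m$ is an involution of ${\bf Nar}$, the upper bound together with its equality case $m=d-1$ is equivalent to the lower bound together with its equality case $m=1$.

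Everything thus reduces to the inequality $r_1+r_2\ge w_1+w_2$ for all $m\in{\bf Nar}$, with equality only at $m=1$; this is the heart of the matter. For the Fermat type, $w_1\mid d$ and $w_2\mid d$ (as $d=\delta w_1w_2$), so each nonzero residue $r_i$ is a positive multiple of $w_i$, giving $r_i\ge w_i$, with equality throughout iff $m\equiv1$. For the chain type only $w_1\mid d$ (as $d=w_1(\delta w_2+1)$), so $r_1\ge w_1$ while $r_2$ is unconstrained; here I would supplement $r_1\ge w_1$ with a direct estimate of $r_2=mw_2\bmod d$ expressed through $m\bmod(\delta w_2+1)$. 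The loop type is the real obstacle: since $d=\delta w_1w_2+w_1+w_2$ is coprime to both $w_1$ and $w_2$, no residue is constrained and the naive coordinatewise bounds $r_i\ge w_i$ already fail (a narrow $m$ can have $r_2<w_2$, the deficit being absorbed by a surplus in $r_1$). For the loop I would instead use that every pair $(r_1,r_2)$ lies on the line $w_2r_1-w_1r_2=kd$ and minimize $r_1+r_2$ over the finitely many integers $k$ with $-w_1<k<w_2$: the case $k=0$ forces $(r_1,r_2)=(w_1,w_2)$ by coprimality of $w_1,w_2$, and for $k\ne0$ one bounds $r_1+r_2$ from below using the explicit value of $d$. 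I expect this last estimate to require the most care, and it is exactly the step where the sum behaves well even though the individual residues do not.
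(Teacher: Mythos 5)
Your reduction of the lemma to the elementary inequality $r_1+r_2\ge w_1+w_2$ (where $r_i=mw_i\bmod d$), together with the broad computation, the duality under $m\mapsto d-m$, and the complete treatment of the Fermat type, is correct; since the paper itself offers no proof beyond ``we can check directly,'' the only question is whether your argument is complete. It is not: for the chain and loop types --- which you yourself identify as the heart of the matter --- you describe a strategy rather than carry one out, and for the loop the strategy as stated does not close. On the line $w_2r_1-w_1r_2=kd$ with $k\ge 1$, the continuous minimum of $r_1+r_2$ over the box $(0,d)^2$ is $kd/w_2$, which can be strictly smaller than $w_1+w_2$: for $\delta=1$, $(w_1,w_2)=(2,3)$, $d=11$, $k=1$ one gets $11/3<5$, whereas the actual lattice minimum on that line is $7$, attained at $(r_1,r_2)=(5,2)$ (i.e.\ $m=8$). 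So ``bounding $r_1+r_2$ from below using the explicit value of $d$'' cannot mean the linear bound; one must use that the admissible pairs on each line form a coset of $\mathbb{Z}\cdot(w_1,w_2)$ and locate the first point of that coset entering the positive quadrant, which is exactly the delicate lattice-point count you defer. The chain case is genuinely of the same nature: $r_1=w_1s$ with $s=m\bmod(\delta w_2+1)$ is fine, but the compensation between a surplus in $r_1$ and a deficit in $r_2$ still has to be proved (e.g.\ for $W=x^4+xy^2$, $d=8$, $(w_1,w_2)=(2,3)$, $m=6$ gives $(r_1,r_2)=(4,2)$, exceeding $w_1+w_2=5$ by only $1$), and your one-sentence indication does not yet do this.

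Until those two estimates are supplied, the proof is incomplete precisely at its hardest point. Two smaller remarks: your observation that the equality clause fails for $x^2+y^2$ (where $\widehat{c}_W=0$ and $\deg_\C\beta_1=0=\widehat{c}_W$) is correct and worth keeping as a stated exclusion; and if you want to avoid the residue combinatorics altogether, an alternative route is to transport the degrees through the (degree-preserving) Krawitz mirror map to monomials of the Milnor ring of $W^T$, where the bounds $0\le\deg\le\widehat{c}$ with extremes attained only at $1$ and the Hessian class are standard --- though that trades an elementary computation for a citation.
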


\subsection{Dubrovin-Frobenius manifolds in FJRW theory}
% and some computation tools
We briefly review the  quantum products and Dubrovin-Frobenius manifolds in FJRW theory \cite{FJR}. 

Following \cite[Definition 2.2.7]{FJR}, for an admissible LG pair $(W, G)$, we denote the moduli space of stable $W$-structure by $\overline{\mathcal{W}}_{g, k}(g_1, \ldots, g_k)$ where $2g-2+k>0$. 
Roughly speaking, let $C$ be a genus-$g$ orbicurve with $k$-markings, decorated by the group elements $g_1, \ldots, g_k\in G$, 
a $W$-structure on the orbicurve $C$ \cite[Definition 2.2.1]{FJR} consists of $N$ orbifold line bundles $\cL_1, \ldots, \cL_N$ over $C$, satisfying certain constraint given by $W$ \cite[Definition 2.1.11]{FJR}. 
Virtual cycles for the moduli of $W$-structures are constructed in \cite{FJR1}, see \cite[Definition 4.1.6]{FJR}. 
The main output of FJRW theory is a cohomological field theory $\{\Lambda_{g,k}^{W, G}:  \cH_{W, G}^{\otimes k}\to H^*(\overline{\cM}_{g,k})\}$ \cite[Definition 4.2.1]{FJR}.
Let $\gamma_i\in \cH_{g_i}$, the {\em genus-$g$ $k$-point FJRW invariant (or correlator)} $\<\gamma_1, \ldots, \gamma_k\>_{g,k}$ is defined by
$$\<\gamma_1, \ldots, \gamma_k\>_{g,k}:=\int_{\overline{\mathcal{M}}_{g,k}}\Lambda^{W, G}_{g, k}(\gamma_1, \ldots, \gamma_k).$$
 
Let $\{\phi_i\}$ be a homogeneous basis of $\cH_{W, G}$ and $\{\phi^i\}$ be its dual basis. 
We denote the coordinate of the basis element $\phi_i$ by $t_i$. 
Let ${\bf t}:=\sum_{i} t_i \phi_i.$
According to  \cite[Corollary 4.2.8]{FJR}, the genus-zero FJRW theory defines a formal Dubrovin-Frobenius manifold on the state space $\cH_{W, G}$. 
The potential of the formal Dubrovin-Frobenius manifold of $\cH_{W, G}$ will be denoted by $F_0(\bt)$ here. 
Its third derivative is given by the double bracket notation
$${\partial^3\over\partial t_k\partial t_j\partial t_i} F_0(\bt)=\LL\phi_i,\phi_j, \phi_k\RR_{0,3}(\bt):=\sum_{n\geq 0}{1\over n!}\<\phi_i, \phi_j, \phi_k, \bt, \ldots, \bt\>_{0, n+3}.$$
The quantum product $\star_{\bt}$ in FJRW theory is defined by 
\begin{equation}
\label{quantum-product}
\<\phi_i\star_{\bt}\phi_j, \phi_k\>=\LL\phi_i,\phi_j, \phi_k\RR_{0,3}(\bt).
\end{equation}
The associativity of the quantum product $\star_{\bt}$ is equivalent to the Witten-Dijkgraaf-Verlinde-Verlinde (WDVV) equations in FJRW theory.
That is, for any elements $\gamma_1, \gamma_2, \gamma_3, \gamma_4\in \cH_{W, G}$, we have 
\begin{equation}
\label{wdvv-equation}
\sum_{a}\LL\gamma_1, \gamma_2, \phi_a\RR_{0,3}(\bt)\cdot\LL \phi^a, \gamma_3, \gamma_4\RR_{0,3}(\bt)
=
\sum_{a}\LL\gamma_1, \gamma_3, \phi_a\RR_{0,3}(\bt)\cdot\LL \phi^a, \gamma_2, \gamma_4\RR_{0,3}(\bt).
\end{equation}

Now we collect some properties of FJRW invariants, which will be used frequently later.  
\subsubsection{Concavity axiom} % Recall Concavity axiom 
%\textcolor{red}{State the concavity axiom in \cite{FJR}.} 
The {\em concavity axiom} in~\cite[Theorem 4.1.8]{FJR} is in fact the only direct method we use in the paper to calculate FJRW invariants.

Recall that for a nondegenerate quasihomogeneous polynomia $W: \C^N\to \C,$ let $(\mathcal{L}_1, \ldots, \mathcal{L}_N)$ be the universal orbifold line bundles on the universe curve $\mathcal{C}$ where 
$\pi: \mathcal{C}\to \overline{\mathcal{W}}_{g, k}(g_1, \ldots, g_k)$ is the projection to the moduli of $W$-structures. We say a correlator 
$\<\gamma_1, \ldots, \gamma_k\>_{0, k}$ is {\em concave} if all the insertions $\{\gamma_i\in \cH_{g_i}\}$ are narrow and  $H^0(C, \mathcal{L}_j)=0$ for any $1\leq j\leq N$ and each geometric point $[C]\in \overline{\mathcal{W}}_{g, k}(g_1, \ldots, g_k).$
In this case, $\pi_*(\bigoplus_{i=1}^N \mathcal{L}_i)=0$ and $R^1\pi_*(\bigoplus_{i=1}^N \mathcal{L}_i)$ is locally free. 
Let $c_{\rm top}$ be the {\em top Chern class}, ${\rm PD}$ be the Poincar\'e dual, and 
$${\rm st}: \overline{\mathcal{W}}_{g, k}(g_1, \ldots, g_k)\to \overline{\mathcal{M}}_{g,k}$$ be the forgetful morphism. 
Then \cite[Theorem 4.1.8]{FJR} implies
\begin{equation}\label{concavity-axiom}
\<\gamma_1, \ldots, \gamma_k\>_{0, k}
=\int_{\overline{\mathcal{M}}_{g,k}}
\frac{1}{\deg \rm st} {\rm PD} \circ {\rm st}_\ast \circ {\rm PD}^{-1}
\left( c_{\rm top}\left(R^1 \pi_\ast (\bigoplus_i \mathcal{L}_i)\right)^\vee\right).
\end{equation}

\subsubsection{Some criteria for nonvanishing FJRW invariants}

%Now we discuss some vanishing conditions of the FJRW invariants. 
%By \eqref{parity}, the parity of the element is always even, thus the quantum product is commutative.
%By \eqref{complex-degree}, we have $$\deg_\C\alpha_i={2(i-1)\over d}, \quad \deg_\C\beta_j=1-{2\over d}.$$
%As a direct consequence of 
According to \cite[Proposition 2.2.8, Theorem 4.1.8 (1)]{FJR}, we have
\begin{lemma}
[Nonvanishing criteria]
\label{selection+degree}
If the FJRW invariant $\<\gamma_1, \cdots, \gamma_k\>_{0,k}$ does not vanish, then the following conditions holds:
\begin{enumerate}
\item (Selection rule) the underlying moduli space is nonempty and %we have %a degree constraint formula
\begin{equation}
\label{selection-rule}
\deg \cL_j=(k-2){w_j\over d}-\sum_{i=1}^{k}\theta_j(\gamma_i)\in \Z \quad \forall 1\leq j\leq N.
\end{equation}
Here $\theta_j(\gamma_i)=\theta_j(g)$ is given by \eqref{exponent-rational} if $\gamma_i\in \cH_{g}$.
\item (Degree formula) the degrees $\deg_\C\gamma_i$ satisfy
\begin{equation}
\label{virtual-cycle-degree}
\sum_{i=1}^{k}\deg_\C\gamma_i=\widehat{c}_W-3+k. %=k-1-{4\over d}.
\end{equation}
\end{enumerate}
\end{lemma}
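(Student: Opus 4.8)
The plan is to derive both assertions directly from the two cited structural inputs, specialized to genus zero: the selection rule is a statement about the existence of $W$-structures (\cite[Proposition 2.2.8]{FJR}), while the degree formula is the grading/dimension axiom of the cohomological field theory $\Lambda^{W,G}_{0,k}$ (\cite[Theorem 4.1.8 (1)]{FJR}). The substance of the lemma is to repackage these into the explicit numerical identities \eqref{selection-rule} and \eqref{virtual-cycle-degree}.

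First I would treat the selection rule. If $\<\gamma_1,\cdots,\gamma_k\>_{0,k}\neq 0$, then $\Lambda^{W,G}_{0,k}(\gamma_1,\ldots,\gamma_k)$ is a nonzero class, so the moduli space $\overline{\mathcal{W}}_{0,k}(g_1,\ldots,g_k)$ must be nonempty; in particular a $W$-structure $(\cL_1,\ldots,\cL_N)$ exists on some genus-zero orbicurve $C$ with the prescribed decorations. I would then compute the coarse degree of each $\cL_j$. Using the quasihomogeneity relation $\sum_j a_{\ell,j} w_j = d$ for every monomial, together with invertibility of the exponential matrix $E_W$, the $W$-structure constraints $\bigotimes_j \cL_j^{\otimes a_{\ell,j}}\cong K_{C,\log}$ identify $\cL_j$ with $K_{C,\log}^{\otimes w_j/d}$ as an orbifold $\Q$-line bundle. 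On a genus-zero orbicurve one has $\deg K_{C,\log}=k-2$, and the orbifold structure at the $i$-th marked point contributes the fractional shift $\theta_j(g_i)$; orbifold Riemann--Roch then yields $\deg\cL_j=(k-2)\frac{w_j}{d}-\sum_i\theta_j(\gamma_i)$, which is exactly \eqref{selection-rule}. Since $\cL_j$ descends to an honest line bundle on the coarse curve, this number is forced to be an integer, giving the integrality assertion.

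Next, for the degree formula I would invoke the grading axiom of the CohFT. By \cite[Theorem 4.1.8 (1)]{FJR} the class $\Lambda^{W,G}_{g,k}(\gamma_1,\ldots,\gamma_k)$ is supported in a single cohomological degree, namely $\deg_\C\Lambda^{W,G}_{g,k}=\widehat{c}_W(g-1)+\sum_{i=1}^k\deg_\C\gamma_i$, where $\deg_\C\gamma_i=\frac{N_{g_i}}{2}+\iota_{g_i}$ as in Lemma~\ref{degree-lemma}. The correlator is the integral of this class over $\overline{\cM}_{g,k}$, which vanishes unless the degree equals the dimension $\dim_\C\overline{\cM}_{g,k}=3g-3+k$. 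Imposing $\widehat{c}_W(g-1)+\sum_i\deg_\C\gamma_i=3g-3+k$ and specializing to $g=0$ produces $\sum_i\deg_\C\gamma_i=\widehat{c}_W-3+k$, which is \eqref{virtual-cycle-degree}.

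The step requiring the most care is the orbifold degree computation: one must match the conventions for the local data $\theta_j(g_i)\in[0,1)$ in \eqref{exponent-rational} with the fractional-degree bookkeeping of orbifold Riemann--Roch, and verify that the identification $\cL_j\cong K_{C,\log}^{w_j/d}$ is the one forced by the $W$-structure in each of the Fermat, chain and loop cases of Table~\ref{table-basis}. There is no deeper obstacle here, since the result is ultimately a translation of the cited FJR axioms; once the genus-zero and $N=2$ specializations are checked, both conditions follow at once.
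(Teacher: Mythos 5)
Your proposal is correct and matches the paper's treatment: the paper gives no independent argument, deriving both parts directly by citing \cite[Proposition 2.2.8]{FJR} for the selection rule and \cite[Theorem 4.1.8 (1)]{FJR} for the degree formula, which is exactly what you unpack (the orbifold Riemann--Roch computation of $\deg\cL_j$ and the dimension matching against $\dim_\C\overline{\cM}_{0,k}=k-3$). Your genus-zero specializations of both identities are accurate, so nothing further is needed.
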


%\subsubsection{Invariance properties}
\begin{remark}
[$G_W$-invariance]
According to \cite[Theorem 4.1.8 (10)]{FJR}, the FJRW invariants % $\<\phi_1, \cdots, \phi_k\>_{0,k}$ 
are invariant under the $G_W$-action.
This is called the {\em $G_W$-invariance property}.
\end{remark}
\begin{remark}
[Permutation invariance]
The FJRW invariant  $\<\gamma_1, \cdots, \gamma_k\>_{0,k}$ may not be invariant under permutations action in general, because it is only super commutative, and the parity of broad element could be odd.
However, when $W$ is an invertible polynomial with two variables, all the broad elements in \eqref{broad-generator} have even parity, and the FJRW invariant $\<\gamma_1, \cdots, \gamma_k\>_{0,k}$ for the LG pair $(W, \<J\>)$ are invariant under permutations.
\end{remark}

%\newpage

\subsection{FJRW theory for simple singularities and their variants}
\label{special-cases}

Now we collect some results in \cite{FJR} for {\em simple singularities} (or $ADE$ singularities). We will  use these results to show Conjecture \ref{main-conj} holds for $W$ if $\widehat{c}_W<1.$ %$W=x_1^3+x_2^3$ or $W=x_1^2+x_2^m$ for $m\geq 2$.

\subsubsection{FJRW theory of simple singularities}
\label{sec-simple}
In singularity theory, under right equivalence, a polynomial singularity $W$ with central charge $\widehat{c}_W<1$ must be equivalent to one of the forms below:
\begin{equation}
\label{mirror-ade-singularity}
\begin{dcases}
A_{n}: & W=x^{n+1}, n\geq 1;\\
D_{n}: & W=x^{n-1}+xy^2, n\geq 3;\\
E_6: & W= x^3+y^4;\\
E_7: & W=x^3+xy^3;\\
E_8: & W=x^3+y^5.
\end{dcases}
\end{equation}
These are called simple singularities, or $ADE$ singularities. 
In \cite{FJR, FFJMR}, besides the $D_3$ case, the FJRW theory for the polynomials in \eqref{mirror-ade-singularity} as well as for the $D_{n}^T$-type polynomial $W=x^{n-1}y+y^2$ with $n\geq 3$ have been studied.
All possible admissible groups for these polynomials have been considered. 
We remark that among such polynomials, non-maximal admissible group only exists for $x^{n-1}+xy^2$ when $n$ is even.

We say two FJRW theories are {\em equivalent}, if there is an isomorphism between two underlying state spaces, such that the potentials of the underlying Dubrovin-Frobenius manifolds and the ancestor FJRW potentials are the same. 
We summarize the results in \cite[Theorem 6.1.3]{FJR} and \cite[Theorem 4.5, Lemma 5.2]{FFJMR} as below.
\begin{theorem}
\label{theorem-ade}
\cite{FJR, FFJMR}
If $W$ is a polynomial in \eqref{mirror-ade-singularity}, or $W=x^{n-1}y+y^2, n\geq 3$, then the FJRW theory of $(W, G)$ is semisimple because:
\begin{enumerate}
\item
the Dubrovin-Frobenius manifold of $(W, G_W)$ is equivalent to the Dubrovin-Frobenius manifold in the Saito theory of the mirror polynomial $W^T$;
\item 
if $n$ is even and $n\geq 4$, the Dubrovin-Frobenius manifold of the LG pair $(W=x^{n-1}+xy^2, \<J\>)$ is equivalent to the Dubrovin-Frobenius manifold in the Saito theory of $W=x^{n-1}+xy^2$. 
\end{enumerate}
\end{theorem}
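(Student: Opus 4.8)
The plan is to deduce semisimplicity of the FJRW theory in each listed case from semisimplicity of the corresponding B-model Dubrovin-Frobenius manifold, using that semisimplicity is invariant under equivalence of Dubrovin-Frobenius manifolds. Indeed, the equivalence in the sense defined above restricts in genus zero to a linear isomorphism of state spaces that is an isometry for the pairings and intertwines the quantum products $\star_{\bt}$; it therefore carries the identity to the identity and a complete system of orthogonal idempotents at a point $\bt$ to one at the corresponding point on the other side. Thus a point with semisimple Frobenius algebra on one side maps to such a point on the other, and it suffices, given the equivalences (1) and (2), to know that the relevant Saito-theory Dubrovin-Frobenius manifolds are semisimple.

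For this I would recall the classical reason Saito's theory \cite{Sai} produces semisimple Dubrovin-Frobenius manifolds. For an isolated quasihomogeneous singularity $f$ of Milnor number $\mu$, a choice of primitive form endows the base of a miniversal deformation $F(x,\bs)$ with a Dubrovin-Frobenius structure whose Frobenius algebra at $\bs$ is the Jacobian ring ${\rm Jac}(F(\cdot,\bs))$. At a generic parameter the deformed function is Morse, with $\mu$ nondegenerate critical points, so its Jacobian ring is the reduced algebra $\C^{\mu}$; a finite product of copies of $\C$ is semisimple (equivalently, its characteristic element in the sense of \cite{Abr} is a unit). Hence the Saito manifold is semisimple at its generic point. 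This applies to each mirror polynomial $W^T$ occurring in part (1) and to $W=x^{n-1}+xy^2$ occurring in part (2), all of which are isolated singularities.

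It then remains to produce the equivalences themselves, and this is where the real content — and the main obstacle — lies; I would invoke the cited computations rather than redo them. For the maximal-group pairs $(W,G_W)$ the equivalence of Dubrovin-Frobenius manifolds with the Saito theory of $W^T$ is \cite[Theorem 6.1.3]{FJR}, and for the non-maximal pair $(x^{n-1}+xy^2,\<J\>)$ with $n$ even it is \cite[Theorem 4.5, Lemma 5.2]{FFJMR}. The strategy underlying those results, which one would follow to reprove the genus-zero statement, is reconstruction: since $\widehat{c}_W<1$ the state space has small rank, so the WDVV equations \eqref{wdvv-equation} together with the selection rule \eqref{selection-rule} and the degree constraint \eqref{virtual-cycle-degree} determine the whole genus-zero potential from a finite list of basic correlators, and the narrow ones among these are evaluated by the concavity axiom \eqref{concavity-axiom}; matching these structure constants against the Saito primary potential, after a normalization of the primitive form, yields the isomorphism. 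The subtlety specific to part (2) is that the group is non-maximal, so the B-model partner is $W$ itself rather than $W^T$, and one must check that the surviving $G_W$-invariant broad sector identified in Lemma \ref{lemma-invariant-subspace} reassembles into the correct Saito Frobenius structure. These verifications are carried out in \cite{FJR, FFJMR}, which we take as given.
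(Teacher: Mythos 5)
Your proposal is correct and matches the paper's treatment: the theorem is stated there purely as a summary of \cite[Theorem 6.1.3]{FJR} and \cite[Theorem 4.5, Lemma 5.2]{FFJMR}, with semisimplicity of the Saito side justified (in the introduction) exactly as you do, by genericity of Morse functions in the miniversal deformation. Your added remarks on why equivalence of Dubrovin-Frobenius manifolds transports idempotents are a harmless elaboration of the same argument.
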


\subsubsection{Conjecture \ref{main-conj} %for Fermat polynomials 
when $\widehat{c}_W<1$}

In general, the FJRW theory of an LG pair $(W, G)$  depends on the choices of both $W$ and $G$, so the equivalence classes are different from the right equivalence classes of singularities.
For the invertible simple singularities, we have 
\begin{theorem}
\label{theorem-simple}
Let $\widetilde{W}$ be an invertible polynomial with two variables such that $\widehat{c}_{\widetilde{W}}<1$, then the FJRW theory of any admissible pair $(\widetilde{W}, G)$ is either the FJRW theory of $(x^2+y^2, \<J\>)$ or equivalent to some $(W, G_W)$ where $W$ is one of the polynomials in \eqref{mirror-ade-singularity}. 
\end{theorem}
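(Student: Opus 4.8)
The plan is to reduce every admissible pair $(\widetilde W,G)$ with $\widehat c_{\widetilde W}<1$ to a case already settled in Theorem~\ref{theorem-ade}, the only genuine exception being $(x^2+y^2,\<J\>)$. I would begin with a finite enumeration straight from \eqref{central-charge}: since $\widehat c_{\widetilde W}=2-2(w_1+w_2)/d$, the hypothesis $\widehat c_{\widetilde W}<1$ is equivalent to $2(w_1+w_2)>d$, and running this through the three columns of Table~\ref{table-basis} leaves, up to permutation of the variables, only the Fermat polynomials $x^2+y^b$, $x^3+y^3$, $x^3+y^4$, $x^3+y^5$, the chain polynomials $x^2+xy^{a_2}$, $x^{n-1}+xy^2$, $x^3+xy^3$, and the loop polynomials $x^2y+xy^{a_2}$. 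Each is a simple singularity. For each I would also list the admissible groups $G$ with $\<J\>\subseteq G\subseteq G_{\widetilde W}$; reading off $\delta$ from Table~\ref{table-basis}, a non-maximal group occurs exactly when $\delta>1$, i.e.\ for $x^2+y^{2m}$ ($\delta=2$), for $x^3+y^3$ ($\delta=3$), and for $x^{n-1}+xy^2$ with $n$ even ($\delta=2$); in each of these $G_{\widetilde W}/\<J\>$ is cyclic of order $\delta\in\{2,3\}$, so $G$ is either $\<J\>$ or $G_{\widetilde W}$.

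Among these, the presentations with no weight-$\tfrac12$ variable and maximal group---namely $x^3+y^3$, $x^3+y^4=E_6$, $x^3+y^5=E_8$, $x^3+xy^3=E_7$, and $x^{n-1}+xy^2=D_n$ for $n\ge4$---are immediate from Theorem~\ref{theorem-ade}(1), and the even-$n$ non-maximal pair $(x^{n-1}+xy^2,\<J\>)$ is immediate from Theorem~\ref{theorem-ade}(2). For the remaining (weight-$\tfrac12$) presentations the mirror theorem of \cite{HLSW} is unavailable, and I would instead use two structural operations: invariance of FJRW theory under a weight-preserving isomorphism of the pair $(W,G)$, and the $A_1$-suspension that deletes a variable appearing only as a square. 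Up to a swap of variables the chain $x^2+xy^{a_2}$ is exactly the transpose form $x^{a_2}y+y^2$ already treated in Theorem~\ref{theorem-ade}; completing the square in $u=x+\tfrac12 y^{a_2-1}$ turns the loop $x^2y+xy^{a_2}$ into the standard chain $D_{2a_2}=x^{2a_2-1}+xy^2$, carrying its group onto $\<J\>$; and completing the square in $v=y+\tfrac12 x^{m}$ turns $(x^m y+y^2,G_W)$ into $(x^2+y^{2m},\<J\>)$. In this last identification one checks that the nonlinear change of variables carries the diagonal group $G_W(x^m y+y^2)$, cut out by $\lambda_2^2=1$ and $\lambda_1^m\lambda_2=1$ (so that $\lambda_1^m=\lambda_2$), onto the minimal group $\<J\>$ of $x^2+y^{2m}$. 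For $(x^3+y^3,\<J\>)$ the required isomorphism is linear (sending three concurrent lines to three concurrent lines), and since $J=\zeta_3\cdot\mathrm{id}$ is scalar it is preserved, giving $(x^3+y^3,\<J\>)\simeq(x^3+xy^2,\<J\>)$, the even $D_4$ case. Finally $A_1$-suspension reduces $(x^2+y^b,G_W)$ to the one-variable $(y^b,\mu_b)=(A_{b-1},G_W)$. After these moves every pair lies in the scope of Theorem~\ref{theorem-ade}, hence is semisimple.

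The pair $(x^2+y^2,\<J\>)$ is the degenerate endpoint $m=1$ of the family above, where the target $D_{m+1}$ degenerates and no reduction is available. Here $\widehat c=0$, so the broad class $dx\wedge dy$ survives $\<J\>$-invariance and the state space has rank two. I would settle this case by computing its two-dimensional Frobenius algebra directly and verifying, via the criterion of \cite{Abr}, that its characteristic element is a unit, so that the algebra is $\C\times\C$ (reflecting the splitting $x^2+y^2=(x+\sqrt{-1}\,y)(x-\sqrt{-1}\,y)$), which is semisimple.

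The step I expect to be the main obstacle is the first structural operation: rigorously proving that these weight-preserving isomorphisms---especially the nonlinear ones coming from completing the square---induce equivalences of the entire genus-zero Dubrovin-Frobenius manifold, and carefully tracking the induced correspondence of symmetry groups so that a maximal group on one side is matched with the minimal group $\<J\>$ on the other. This bookkeeping is exactly what replaces the mirror theorem, which fails for the weight-$\tfrac12$ presentations $x^2+y^b$ and $x^2+xy^{a_2}$ that carry the subtle cases; once it is in place, the semisimplicity of each reduced model is supplied by Theorem~\ref{theorem-ade} together with the fact that the Saito theory of a simple singularity is generically Morse, hence semisimple.
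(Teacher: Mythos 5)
Your enumeration of the cases with $\widehat{c}_{\widetilde W}<1$ is correct and agrees with the paper's, and the reduction targets you identify (the chain $x^2+xy^{a_2}$ being a permuted $x^{a_2}y+y^2$; the loop $x^my+xy^2$ matching $x^{2m-1}+xy^2$; $x^my+y^2$ matching $x^{2m}+y^2$; the direct two-dimensional computation for $(x^2+y^2,\<J\>)$) are exactly those used in the paper. The gap is in the step you yourself flag as the main obstacle: you invoke ``invariance of FJRW theory under a weight-preserving isomorphism of the pair $(W,G)$,'' including the nonlinear changes of variables obtained by completing the square, but no such invariance principle is available in FJRW theory. The theory is built from the moduli of $W$-structures, which sees $W$ monomial by monomial through the line-bundle conditions, and from a group of \emph{diagonal} symmetries, which a nonlinear (or even a non-diagonal linear) coordinate change does not transport; the paper explicitly cautions that FJRW equivalence classes differ from right-equivalence classes of singularities. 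Since these equivalences are the entire content of the theorem beyond the enumeration, asserting them modulo this unproved principle leaves the proof essentially open. (Even the $(x^3+y^3,\<J\>)\simeq(x^3+xy^2,\<J\>)$ case, which you dismiss as a linear change of variables preserving the scalar $J$, is the nontrivial $D_4$ comparison of \cite{FFJMR}, established there by explicit computation.)

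What the paper does instead, for each pair of theories to be identified, is: (a) exhibit an explicit isomorphism of graded state spaces preserving the pairing --- in the loop case (Proposition~\ref{loop-proof}) this is \emph{not} the obvious map on the broad sector; it is extracted from Acosta's Frobenius-algebra isomorphisms $\cH_{W,\<J\>}\cong{\rm Jac}(W)$ together with the Jacobian-ring isomorphism $x\mapsto x$, $y\mapsto x^{m-1}+2y$ coming from precisely your completion of the square --- and (b) invoke the reconstruction theorem \cite[Theorem 6.2.10, part (3)]{FJR}, which pins down the whole genus-zero potential from a single genus-$0$ $4$-point correlator such as $\<\alpha_3,\alpha_3,\alpha_{2m-4},\alpha_{2m-2}\>_{0,4}$; this correlator is concave and is computed and matched on both sides via the concavity axiom. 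To close your argument you would need to replace the invariance principle by this reconstruction-plus-computation scheme (or actually prove the invariance principle, which is a substantial task in its own right). The remaining ingredients of your proposal --- the $A_1$-suspension $(x^2+y^b,G_W)\rightsquigarrow A_{b-1}$ and the rank-two computation for $(x^2+y^2,\<J\>)$ --- are fine and agree with Proposition~\ref{fermat-small}(1) and the idempotent-basis argument in the paper.
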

 Now part (1) of Theorem \ref{main} follows from Theorem \ref{theorem-ade} and Theorem \ref{theorem-simple}.

\noindent {\em Proof of Theorem \ref{theorem-simple}.}
%We first consider invertible polynomials of chain type or loop type. 
According to the classification, up to permutation, an invertible polynomial $W$ with $N=2$ and $\widehat{c}_W<1$ must be one of the following cases. 
In each case, there are at most two choices of admissible groups, $\<J\>$ and $G_W$.
\begin{enumerate}
\item $W=x^{n-1}+xy^2,$ or $x^{n-1}y+y^2$ with $n\geq3$, or $x^3+xy^3$.
\item $W=x^2+y^2, x^3+y^3, x^3+y^4, x^3+y^5$. 
\item $W=x^m+y^2$ for $m\geq 3$.
\item $W=x^{m}y+xy^2$ with $m\geq 2$.
\end{enumerate}

The first situation is already included in Theorem \ref{theorem-ade}. 

%Now we consider Fermat polynomials with two variables. 
For the second situation, we only need to show the cases when $\<J\>\neq G_W.$ This happens when $W=x^2+y^2$, or $x^3+y^3$.
The FJRW theory of $(W=x^2+y^2, \<J\>)$ is semisimple.
In this case, only classical product exists,
$$\alpha_1\star\alpha_1=\beta_1\star\beta_1=\alpha_1, \quad \alpha_1\star\beta_1=\beta_1\star\alpha_1=\beta_1.$$
The Frobenius manifold is semisimple because there is an idempotent basis $$\left\{e_1={\alpha_1+\beta_1\over 2},  \quad e_2={\alpha_1-\beta_1\over 2}\right\}.$$ 
On the other hand, the FJRW theory of $(x^3+y^3, \<J\>)$ is also semisimple. It is equivalent to the FJRW theory of $(x^3+xy^2, \<J\>)$,  following the method used in \cite{FFJMR}.

%\newpage 
For the third situation, we have
%Now we prove Conjecture \ref{main-conj} for the first case by showing that each of the FJRW theory is equivalent to one of the FJRW theories that appeared in Theorem \ref{theorem-ade}.
\begin{proposition}\label{fermat-small}
We have the following equivalences between FJRW theories.
\begin{enumerate}
\item The FJRW theory of $(x^m+y^2, G_W)$ is equivalent to the FJRW theory of $(x^m, \<J\>)$. In particular, if $m$ is odd, $\<J\>=G_W$.
\item If $m=2n$ is even and $n\geq2$, the FJRW theory of $(x^{2n}+y^2, \<J\>)$ is equivalent to the FJRW theory of $(W=x^{n}y+y^2, \<J\>=G_W)$.
\end{enumerate}
\end{proposition}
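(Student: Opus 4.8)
The plan is to prove each equivalence in two stages: first exhibit a graded, pairing-preserving isomorphism of the two state spaces, and then show it identifies all genus-zero correlators. Because $\widehat c_W<1$ in every case here, the underlying Dubrovin–Frobenius manifolds are of $ADE$ type and of small rank, so the WDVV equation~\eqref{wdvv-equation} together with the selection rule~\eqref{selection-rule} and the degree constraint~\eqref{virtual-cycle-degree} reconstruct the whole genus-zero potential from a finite amount of initial data; the concavity axiom~\eqref{concavity-axiom} is the only tool needed to produce that data. Finally, since these theories are semisimple and homogeneous (the chain/Fermat side being identified with a Saito $ADE$ theory through Theorem~\ref{theorem-ade}), once the genus-zero parts agree the full ancestor potentials agree as well by Teleman's uniqueness \cite{Teleman}; this is what the definition of equivalence requires.

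For part (1), write $G_W=\langle\zeta_m\rangle\times\langle-1\rangle$ for $x^m+y^2$. A direct check of the sectors shows the only surviving classes are the narrow vacua $1\vert(\zeta,-1)\rangle$ with $\zeta\in\mu_m\setminus\{1\}$: the untwisted sector carries no $G_W$-invariant broad class, because the top form $x^i\,dx\wedge dy$ is anti-invariant under $y\mapsto-y$, and by~\eqref{narrow-broad} the state space is therefore purely narrow of rank $m-1$, matched bijectively with the narrow classes of $(x^m,\langle J\rangle)$. To compare correlators I would apply the concavity axiom: for any narrow tuple the selection rule~\eqref{selection-rule} forces the orbifold line bundle $\mathcal L_y$ of the weight-$\tfrac12$ variable to have degree $-1$ on every genus-zero component, so $R^1\pi_\ast\mathcal L_y=0$ and $\mathcal L_y$ drops out of the top Chern class in~\eqref{concavity-axiom}; what remains is exactly the class built from $\mathcal L_x$ with $\mathcal L_x^{\,m}\cong K_{\log}$, i.e. the $(x^m,\langle J\rangle)$ integrand, whence all correlators agree. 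The last clause is the group count: when $m$ is odd, $d=2m=|G_W|$ and $\langle J\rangle\subseteq G_W$ force $\langle J\rangle=G_W$.

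For part (2), the weight system $(d;w_1,w_2)=(2n;1,n)$ and the element $J=(e^{2\pi\sqrt{-1}/2n},-1)$ coincide for $x^{2n}+y^2$ and for $x^ny+y^2$, and both state spaces consist of the $n$ narrow classes $\alpha_k$ ($k$ odd) together with a single broad class of degree $\widehat c_W/2$; I would let $\Phi$ send $\alpha_k\mapsto\alpha_k$ and match the Fermat generator $\beta_1$ to the chain generator $\beta_0$ from~\eqref{broad-generator}, checking it is a graded isometry from the explicit monomials and the residue pairing. For the narrow correlators the mechanism of part (1) applies verbatim: $\mathcal L_y$ again has degree $-1$ and drops out of~\eqref{concavity-axiom}, while the surviving bundle $\mathcal L_x$ satisfies $\mathcal L_x^{\,2n}\cong K_{\log}$ in the Fermat case, and $\mathcal L_x^{\,n}\otimes\mathcal L_y\cong K_{\log}$ in the chain case. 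Combining the latter with $\mathcal L_y^{\,2}\cong K_{\log}$ yields $\mathcal L_x^{\,2n}\cong K_{\log}$ as well, and since on a genus-zero orbicurve the square root $\mathcal L_y=K_{\log}\otimes\mathcal L_x^{-n}$ is determined by $\mathcal L_x$, the two moduli problems coincide and $R^1\pi_\ast\mathcal L_x$ is the same bundle; thus $\Phi$ identifies every narrow correlator.

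The hard part will be the correlators that contain the broad insertion, which lie outside the scope of the concavity axiom. Here I would argue indirectly. On the Fermat side $\beta_1$ is \emph{not} $G_W$-invariant: by Lemma~\ref{lemma-invariant-subspace}, $\cH_{W,\langle J\rangle}^{G_W}$ is spanned by the narrow classes together with $\beta_0$ and $\beta_\delta$, neither of which exists when $\delta=2$, so the residual $\mathbb Z_2=G_W/\langle J\rangle$ acts on $\beta_1$ by $-1$ while fixing every narrow class; the $G_W$-invariance property then forces every correlator with an odd number of $\beta_1$'s to vanish, and WDVV~\eqref{wdvv-equation} together with the degree/selection constraints reduces those with an even number to the narrow correlators already matched. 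On the chain side the analogous vanishing must be reproduced without an ambient extra symmetry (it reflects the intrinsic $\mathbb Z_2$-structure of the $D_{n+1}$-manifold), so the delicate points are to verify that the broad three- and four-point couplings take equal values on both sides and that WDVV-reconstruction is genuinely complete for this rank-$(n+1)$ manifold. Once this is done, the two semisimple genus-zero structures coincide and the ancestor potentials agree by Teleman, completing the proof.
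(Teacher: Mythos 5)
Your part (1) and the narrow-sector analysis in part (2) are sound, and your route is more geometric than the paper's: rather than decoupling $\mathcal L_y$ by hand inside the concavity formula, the paper simply observes that both state spaces have the basis $\{\alpha_1,\alpha_3,\dots,\alpha_{2n-1},\,x^{n-1}dx\wedge dy\}$ with matching pairings, and then invokes the reconstruction theorem \cite[Theorem 6.2.10, part (3)]{FJR}, which determines the \emph{entire} genus-zero potential --- broad correlators included --- from the Frobenius algebra together with the single four-point correlator $\langle\alpha_3,\alpha_3,\alpha_{2n-3},\alpha_{2n-1}\rangle_{0,4}$, whose value is taken from \cite[Section 6.3.7]{FJR}. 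That one citation is precisely what buys the paper the step you could not finish.

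The genuine gap is exactly where you flag it. The narrow sectors of the Fermat and chain models agree for essentially formal reasons; the only place the two theories could differ is in the couplings of the broad class, and your proof ends with ``the delicate points are to verify that the broad three- and four-point couplings take equal values on both sides and that WDVV-reconstruction is genuinely complete \dots\ Once this is done \dots''. That verification is the substance of part (2), so as written the argument is incomplete. Two of your worries can in fact be dispatched cheaply: on \emph{both} sides the selection rule \eqref{selection-rule} applied to $\mathcal L_y$ gives $\deg\mathcal L_y=(b-2)/2$ when $b$ insertions are broad (the broad class lives in the identity sector with $\theta_y=0$, while every narrow class has $\theta_y=\tfrac12$), so integrality forces $b$ to be even --- no ambient $G_W$-symmetry is needed on the chain side; and the degree constraint \eqref{virtual-cycle-degree} shows the only three-point function with two broad insertions is the pairing itself. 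What remains, and what you do not supply, is (i) the normalization check that the residue pairings of $x^{n-1}dx\wedge dy$ in the two Milnor rings agree, (ii) the values of the four-point correlators with two broad insertions, and (iii) a proof that WDVV \eqref{wdvv-equation} really closes up at this rank. Either carry these out explicitly or, as the paper does, cite the $D$-type reconstruction theorem of \cite{FJR}; without one of the two the equivalence of the genus-zero potentials is not established. (Your appeal to Teleman for the ancestor potentials is fine once semisimplicity and the genus-zero identification are in hand.)
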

\begin{proof}
The first case is easy. Let us check the second case. 
The weight system of each LG pair is $(d; w_1, w_2)=(2n; 1, n).$
Using \eqref{narrow-generator} and \eqref{broad-generator}, each FJRW vector space has a basis of the form
$\{\alpha_1, \alpha_3, \ldots, \alpha_{2n-1}, x^{n-1}dx\wedge dy\}.$
The isomorphism given by $\alpha_i\mapsto \alpha_i$ and $ x^{n-1}dx\wedge dy\mapsto x^{n-1}dx\wedge dy$ preserves the pairing and the potential. 
The reconstruction follows from \cite[Theorem 6.2.10, part (3)]{FJR} and the calculation of the four point correlator $\<\alpha_3, \alpha_3, \alpha_{2n-3}, \alpha_{2n-1}\>_{0,4}$ follows from \cite[Section 6.3.7]{FJR}.
% Find an isomorphism between vector spaces. Then using the reconstruction via WDVV equation, and calculation using concavity.
\end{proof}

For the last situation, we have
\begin{proposition}
\label{loop-proof}
The FJRW theory of $(W=x^{m}y+xy^{2}, \<J\>)$ with $m\geq 2$ is equivalent to the FJRW theory of the LG pair $(W=x^{2m-1}+xy^2, \<J\>)$.
\end{proposition}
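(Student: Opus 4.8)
The plan is to identify both theories with the Saito B-model of the $D_{2m}$ singularity and then chain the resulting equivalences. I would first record that the two polynomials share the weight system: solving the quasihomogeneity relations for $W=x^{m}y+xy^{2}$ forces $(d;w_1,w_2)=(2m-1;1,m-1)$, and the same triple is forced by $W=x^{2m-1}+xy^2$; hence $\widehat{c}_W=\tfrac{2m-2}{2m-1}<1$ on both sides, the narrow index set is $\{1,\dots,2m-2\}$ (here $\gcd(m-1,2m-1)=1$), and by \eqref{broad-generator} each state space carries exactly two broad generators $y\,dx\wedge dy$ and $x^{m-1}\,dx\wedge dy$. A short computation with Lemma~\ref{degree-lemma} shows that no narrow generator has complex degree $\tfrac{\widehat{c}_W}{2}$, so on each side these two broad generators span precisely the degree-$\tfrac{\widehat{c}_W}{2}$ part of the state space. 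In particular the two state spaces are isomorphic as graded spaces, narrow to narrow and broad to broad.

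For the main route I would use mirror symmetry. The exponent matrix $\left(\begin{smallmatrix} m & 1\\ 1& 2\end{smallmatrix}\right)$ of the loop is symmetric, so the loop is self-transpose, it has no weight-$\tfrac12$ variable, and $\<J\>=G_W$ since $\delta=1$. Hence the all-genus mirror theorem \cite[Theorem 1.2]{HLSW} recalled in the introduction applies and identifies the FJRW theory of $(x^{m}y+xy^2,\<J\>)$ with the Saito theory of $(x^my+xy^2)^T=x^my+xy^2$. Completing the square in the variable $y$ yields a right equivalence $x^my+xy^2\sim x^{2m-1}+xy^2=D_{2m}$, and Saito's Frobenius manifold together with its higher genus potentials is an invariant of the right-equivalence class, so this Saito theory coincides with that of $x^{2m-1}+xy^2$. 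Finally, since $n=2m$ is even and $\geq 4$, part (2) of Theorem~\ref{theorem-ade} identifies the Saito theory of $x^{2m-1}+xy^2$ with the FJRW theory of $(x^{2m-1}+xy^2,\<J\>)$. Chaining these all-genus equivalences proves the proposition.

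Alternatively one can argue directly, as in Proposition~\ref{fermat-small}: take $\Phi$ to be the identity on the narrow generators and an as-yet-undetermined linear map on the two-dimensional broad subspace, and apply the reconstruction theorem \cite[Theorem 6.2.10]{FJR}. The all-narrow correlators agree automatically, since they are concave and the integral in \eqref{concavity-axiom} in genus zero depends only on the orbifold line-bundle degrees coming from the selection rule, which are fixed by the shared weight system and the matched decoration angles. The main obstacle is the broad sector, where concavity gives no information. Here the group mismatch is delicate: $\<J\>=G_W$ for the loop, but $\<J\>$ has index two in $G_W$ for the $D$-type, and the nontrivial element of $\Z/2=G_W/\<J\>$ acts by $-1$ on $x^{m-1}\,dx\wedge dy$ (cf. Lemma~\ref{lemma-invariant-subspace}), forcing correlators such as $\<x^{m-1}dx\wedge dy,\alpha_i,\alpha_j\>_{0,3}$ to vanish on the $D$-side while they are not obviously zero on the loop side. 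The crux is therefore to determine the linear map $\Phi$ on the broad subspace that absorbs this discrepancy, pinning down the finitely many broad correlators by the WDVV equation \eqref{wdvv-equation}, the constraints of Lemma~\ref{selection+degree}, and the four-point computation of \cite[Section 6.3.7]{FJR}.
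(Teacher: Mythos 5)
Your main argument is correct, but it takes a genuinely different route from the paper. You chain three identifications: the FJRW theory of the loop equals the Saito theory of its (self-)transpose by \cite[Theorem 1.2]{HLSW} (legitimately applied, since $\delta=\gcd(m-1,1)=1$ gives $\<J\>=G_W$ and there is no weight-$\frac12$ variable); the right equivalence $y\mapsto y+\frac12 x^{m-1}$ identifies the Saito theory of $x^my+xy^2$ with that of the $D_{2m}$ polynomial $x^{2m-1}+xy^2$; and Theorem~\ref{theorem-ade}(2) returns you to the FJRW theory of $(x^{2m-1}+xy^2,\<J\>)$. The paper instead stays inside FJRW theory: it builds the explicit state-space isomorphism \eqref{loop-chain-iso} from the Frobenius-algebra presentations $\cH_{W,\<J\>}\cong{\rm Jac}(W)$ (Acosta, \cite[Section 5.2.4]{FJR}) combined with the Jacobian-ring isomorphism $y\mapsto x^{m-1}+2y$ --- which is exactly the derivative-level shadow of your completing-the-square substitution --- and then matches the potentials via the reconstruction theorem \cite[Theorem 6.2.10(3)]{FJR} from the single concave correlator $\<\alpha_3,\alpha_3,\alpha_{2m-4},\alpha_{2m-2}\>_{0,4}$. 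Your route is shorter and computation-free, but it leans on two heavy external inputs (the all-genus mirror theorem and the $D_{2m}$ minimal-group result) and produces only an abstract composite equivalence at the level of Dubrovin-Frobenius manifolds; that is the same genus-zero level at which the paper's reconstruction operates, with higher genus following in either case from semisimplicity. Your ``alternative'' direct argument is essentially the paper's approach in outline, and you correctly isolate the crux --- the broad-sector map cannot be the identity because $x^{m-1}\,dx\wedge dy$ is anti-invariant under $G_W/\<J\>\cong\Z/2$ on the chain side but unconstrained on the loop side --- but you leave that map undetermined, which is precisely the step the paper settles with the Jacobian-ring presentation.
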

\begin{proof}
The argument is similar as Proposition \ref{fermat-small}.
The weight system of each LG pair is $(d; w_1, w_2)=(2m-1; 1, m-1).$
Using \eqref{narrow-generator} and \eqref{broad-generator}, each FJRW vector space has a basis of the form 
$\{\alpha_1, \alpha_2, \ldots, \alpha_{2m-2}, \beta_0, \beta_1\}.$
There is also a natural isomorphism 
\begin{equation}\label{loop-chain-iso}
\cH_{x^{2m-1}+xy^2, \<J\>}\cong \cH_{x^{m}y+xy^{2}, \<J\>}
\end{equation} that preserves the pairing and the potential. 
The isomorphism sends $\alpha_i$ to $\alpha_i$, but the transformation for $\beta_0$ and $\beta_1$ is not the obvious one. 
In fact, by \cite[Theorem 2.5]{Acosta} and \cite[Section 5.2.4]{FJR}, there are Frobenius algebra isomorphisms
$$\cH_{x^{m}y+xy^{2}, \<J\>}\cong {\rm Jac}(x^{m}y+xy^{2}), \quad \cH_{x^{2m-1}+xy^2, \<J\>}\cong {\rm Jac}(x^{2m-1}+xy^2).$$ 
So the natural isomorphism in \eqref{loop-chain-iso} is obtained from these two isomorphisms and the isomorphism 
${\rm Jac}(x^{2m-1}+xy^2)\cong {\rm Jac}(x^{m}y+xy^{2})$ given by $x\mapsto x$ and $y\mapsto x^{m-1}+2y$. 
The identification of the potential under the isomorphism \eqref{loop-chain-iso} follows from the same reconstruction argument used in \cite[Theorem 6.2.10, part (3)]{FJR}.
If $m=2$, it is studied in \cite[Appendix A, Case 1]{HLSW}.
If $m\geq 3$, the single four-point correlator that determines the FJRW theory of the pair $(W=x^{m}y+xy^{2}, \<J\>)$ is $\<\alpha_3, \alpha_3, \alpha_{2m-4}, \alpha_{2m-2}\>_{0,4}$. 
This is a concave correlator. %and $$\deg_{\C}\alpha_3={1\over 2m-1}, \quad \deg_{\C}\alpha_{2m-4}={2m-3\over 2m-1} \quad \deg_{\C}\alpha_{2m-2}={2m-2\over 2m-1}.$$
Using the concavity axiom \eqref{concavity-axiom} and formula (95) in \cite[Theorem 6.3.3]{FJR}, 
its value equals to $\<X, X, X^{2m-2}, X^{2m-3}\>_0$ in  \cite[Theorem 6.2.10, part (3)]{FJR}.
See also the calculation in \cite[Section 6.3.5]{FJR}.
%Then using the reconstruction via WDVV equation, and calculation using concavity.
\end{proof}

%If $d=3$, the FJRW theory of the pair is equivalent to the FJRW theory of $D_4$-singularity, which is semisimple according to \cite{FFJMR}.

\iffalse
We give some examples when $W=x_1^d+x_2^d$, with $d=2, 3, 4$.
\begin{itemize}
\item

If $d=2$, only classical product exists,
$$\alpha_1\star\alpha_1=\beta_1\star\beta_1=\alpha_1, \alpha_1\star\beta_1=\beta_1.$$
The Frobenius manifold is semisimple as we have an \textcolor{red}{idempotent} basis $$\left\{e_1={\alpha_1+\beta_1\over 2}, e_2={\alpha_1-\beta_1\over 2}\right\}.$$ 
%Thus it suffices to check for cases when $d\geq 4$. 
%\item If $d=4$, the quantum product for this LG pair has been studied in \cite[Section 4.2]{Francis}.
\end{itemize}

Collect some earlier results. 
(1) Explain the statement is easy (or maybe hard) in cases there are weight ${1\over 2}$ variables.
(2) When $\<J\>=G_W?$
(3) Other cases when $\<J\>\neq G_W$: $D_n$ when $n$ is odd/even? and examples in \cite{HS}.

\fi

%\newpage

\section{Quantum Euler classes and semisimplicity}
\label{sec-nonmax}

%\subsection{Quantum Euler classes}

%Let $W: \C^2\to\C$ be a nondegenerate invertible polynomial, and $\<J\>$ be the minimal admissible group of $W$. 

From now on, we always assume $W:\C^2\to \C$ is a degree $d$ invertible polynomial with $\widehat{c}_W\geq 1$. 
By assumption, $d\geq 4$ and $W$ contains no weight-${1\over 2}$ variables, so $\alpha_2$ is a narrow element.
We will consider the LG pair $(W, \<J\>)$ with the minimal admissible group $\<J\>$. 

In general, the {\em quantum Euler class} (with respect to the basis $\{\phi_i\}$) is given by 
\begin{equation}
\label{euler-def}
{\bf E}({\bf t}):=\sum_{i=1}^{\mu} \phi_i\star_{\bf t}\phi^i.
\end{equation}
It is the {\em characteristic element} \cite[Section 3]{Abr} of the Frobenius algebra $(\cH_{W, \<J\>}, \star_{\bf t}, \< \cdot, \cdot\>)$. 
In~\cite[Theorem 3.4]{Abr}, Abrams proved that the characteristic element of a Frobenius algebra $A$ is a unit if and only if A is semisimple.
We will use this result to prove Theorem~\ref{main}.

%\subsubsection{Computation of $\LL\phi_i,\phi^i, \phi\RR_{0,3}(t\alpha_2)$}
\subsection{A formula of the quantum Euler class}

%\subsubsection{The quantum product along $\alpha_2$}

Let $t$ be the coordinate of $\alpha_2$. We calculate the quantum product $\star_{t\alpha_2}$ and the restriction of the quantum Euler class in~\eqref{euler-def} to ${\bf t}=t\alpha_2$.
Recall $$\LL\phi_i,\phi_j, \phi_k\RR_{0,3}(t\alpha_2)=\sum_{n\geq 0}{t^n\over n!}\<\phi_i, \phi_j, \phi_k, \alpha_2, \ldots, \alpha_2\>_{0, n+3}.$$
In order to compute the quantum Euler class, it is enough to consider the cases when $\phi_j=\phi^i$. 
\begin{lemma}
%Let $W: \C^2\to \C$ be an invertible polynomial with no weight-${1\over 2}$ variables. 
The restriction of the quantum Euler class ${\bf E}({\bf t})\vert_{{\bf t}=t\alpha_2}$ is
\begin{equation}
\label{quantum-euler-j2}
{\bf E}(t\alpha_2)={t^2\over 2!}\sum_i B_i\alpha_1+t\sum_{k=0, \delta}\sum_i A_{i,k}\beta^k + \mu\alpha_{d-1},
\end{equation}
where
\begin{equation}
\label{quantum-euler-vector-explicit}
A_{i,k}:=\<\phi_i, \phi^i, \beta_k, \alpha_2\>_{0,4}; \quad
B_i:=\<\phi_i,\phi^i, \alpha_{d-1}, \alpha_2, \alpha_2\>_{0,5}.
\end{equation}
\end{lemma}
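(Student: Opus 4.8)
The plan is to read off the coefficients of ${\bf E}(t\alpha_2)$ in the dual basis via $\<{\bf E}(t\alpha_2),\phi_l\>=\sum_i\LL\phi_i,\phi^i,\phi_l\RR_{0,3}(t\alpha_2)=\sum_i\sum_{n\ge0}\tfrac{t^n}{n!}\<\phi_i,\phi^i,\phi_l,\alpha_2^{\,n}\>_{0,n+3}$, and then to annihilate almost every term using the degree and selection axioms of Lemma~\ref{selection+degree}. The first point to record is that $\deg_\C\alpha_2=q_1+q_2=1-\widehat{c}_W/2$: since $W$ has no weight-$\tfrac12$ variable and $\widehat{c}_W\ge1$ forces $q_1,q_2<\tfrac12$, the element $J^2$ has rotation numbers $\theta_j(J^2)=2q_j<1$, so $\deg_\C\alpha_2=2q_1+2q_2-q_1-q_2$. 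Combining this with $\deg_\C\phi_i+\deg_\C\phi^i=\widehat{c}_W$ (the pairing has degree $\widehat{c}_W$), the degree formula \eqref{virtual-cycle-degree} applied to $\<\phi_i,\phi^i,\phi_l,\alpha_2^{\,n}\>_{0,n+3}$ collapses to $\deg_\C\phi_l=n\widehat{c}_W/2$. By Lemma~\ref{degree-lemma} one has $0\le\deg_\C\phi_l\le\widehat{c}_W$, with the extreme values attained only by $\alpha_1$ and $\alpha_{d-1}$, so only $n\in\{0,1,2\}$ can contribute, forcing $\deg_\C\phi_l\in\{0,\widehat{c}_W/2,\widehat{c}_W\}$ respectively.

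Next I would dispatch the two easy cases. For $n=0$ we must have $\phi_l=\alpha_1$; since $\alpha_1$ is the flat identity, $\<\phi_i,\phi^i,\alpha_1\>_{0,3}=\<\phi_i,\phi^i\>=1$, and summing over $i$ gives $\mu$, which sits on the dual element $\alpha^1=\alpha_{d-1}$. For $n=2$ we must have $\phi_l=\alpha_{d-1}$, producing $\tfrac{t^2}{2!}\sum_i\<\phi_i,\phi^i,\alpha_{d-1},\alpha_2,\alpha_2\>_{0,5}=\tfrac{t^2}{2!}\sum_iB_i$ on the dual $\alpha^{d-1}=\alpha_1$. These already account for the first and last terms of \eqref{quantum-euler-j2}.

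The substance of the lemma is the middle case $n=1$, $\deg_\C\phi_l=\widehat{c}_W/2$, where both broad elements $\beta_k$ and any narrow elements of this degree a priori compete. I would eliminate the narrow ones by the selection rule \eqref{selection-rule}: for $\phi_l=\alpha_m$ narrow, using $\theta_j(\phi_i)+\theta_j(\phi^i)\in\{0,1\}$ for a dual pair (broad pairs live in the identity sector, narrow pairs satisfy $\theta_j(\alpha_a)+\theta_j(\alpha_{d-a})=1$) and $\theta_j(\alpha_2)=2q_j$, one computes $\deg\cL_j\in\{-\{mq_j\},\,-1-\{mq_j\}\}$, which is never an integer because $\alpha_m$ narrow means $\{mq_j\}\notin\Z$; hence the moduli space is empty and $\sum_i\<\phi_i,\phi^i,\alpha_m,\alpha_2\>_{0,4}=0$. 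The broad contributions are then controlled by $G_W$-invariance: the Casimir $\sum_i\phi_i\otimes\phi^i$ is $G_W$-invariant, and $\alpha_2$ is $G_W$-invariant because it is narrow (Lemma~\ref{lemma-invariant-subspace}); since $G_W$ acts diagonally and each $\beta_k$ is an eigenvector, the $G_W$-invariance of the correlator forces $\sum_i\<\phi_i,\phi^i,\beta_k,\alpha_2\>_{0,4}=0$ unless $\beta_k$ is itself $G_W$-invariant, i.e.\ unless $k=0$ or $k=\delta$ by Lemma~\ref{lemma-invariant-subspace}. Collecting $t\sum_{k=0,\delta}\sum_iA_{i,k}$ on the duals $\beta^k$ yields \eqref{quantum-euler-j2}.

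I expect this $n=1$ analysis to be the only real obstacle: the degree bookkeeping alone leaves the entire middle-degree subspace in play, so one genuinely needs the finer integrality built into the selection rule to discard narrow insertions, together with the $G_W$-invariance symmetry to single out $\beta_0$ and $\beta_\delta$. The remaining cases $n=0,2$ are forced by the endpoint rigidity in Lemma~\ref{degree-lemma} and require only the unit axiom and the definition of $B_i$.
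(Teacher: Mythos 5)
Your proposal is correct and follows essentially the same route as the paper: the degree formula restricts to $n\in\{0,1,2\}$, the selection rule kills narrow insertions at $n=1$, and $G_W$-invariance together with Lemma~\ref{lemma-invariant-subspace} singles out $\beta_0$ and $\beta_\delta$. You actually supply more detail than the paper does on the $n=1$ selection-rule computation (the explicit $\deg\cL_j\in\{-\{mq_j\},-1-\{mq_j\}\}$ step), which the paper leaves as an assertion.
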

\begin{proof}
It is easy to see $$\deg\phi_i+\deg\phi^i=\widehat{c}_W, \quad \deg\alpha_2={w_1+w_2\over d}.$$
Applying the degree formula \eqref{virtual-cycle-degree}, the nonzero coefficients in $\LL\phi_i,\phi^i, \phi\RR_{0,3}(t\alpha_2)$ can only happen when 
$$\deg_\C\phi={n\over 2}\cdot \widehat{c}_W, \quad \text{for} \quad n=0,1,2.$$
There are three cases:
\begin{enumerate}
\item if $n=0$, then $\phi_k=\alpha_1$ and 
$\LL\phi_i,\phi^i, \alpha_1\RR_{0,3}(t\alpha_2)=1.$
\item if $n=1$, either $\phi$ is broad or $\phi$ is narrow.
\item if $n=2$, then $\phi=\alpha_{d-1}$ and 
$$\LL\phi_i,\phi^i, \alpha_{d-1}\RR_{0,3}(t\alpha_2)={t^2\over 2!}\<\phi_i,\phi^i, \alpha_{d-1}, \alpha_2, \alpha_2\>_{0,5}.$$
\end{enumerate}
It remains to consider the cases when $n=1$. In fact, using the selection rule \eqref{selection-rule}, we see $\phi$ must be broad. 
However, the {\em $G_W$-invariance} property implies that if $\beta_k\notin \cH_{W, \<J\>}^{G_W}$, then 
$$\<\alpha_i, \alpha^i, \beta_k, \alpha_2\>_{0,4}=\<\beta_i, \beta^i, \beta_k, \alpha_2\>_{0,4}=0.$$
If  $\beta_k\in \cH_{W, \<J\>}^{G_W}$, then by Lemma \ref{lemma-invariant-subspace}, it must be $\beta_0$ if $W$ is a $2$-chain, $\beta_0$ or $\beta_\delta$ if $W$ is a $2$-loop. 
Now the result follows from~\eqref{euler-def}.
\end{proof}

%\begin{remark} Can we calculate these invariants using the LG model of $(W, G_W)$? \end{remark}
%We can have some nontrivial genus-0 4-point FJRW invariant. 
\iffalse
We give some examples of possibly non-vanishing genus-0 4-point invariants when $W$ is a chain polynomial in \cite{Francis}.
{\red
\begin{itemize}
\item $W=x^3+xy^8$, $\<Z, Z, Z, Y=\alpha_2\>$, $\<W,W,Z,Y\>$. But they cancel with each other if we combine them appropriately.
On the other hand, those with one broad insertion, $\<X, X^2Y , Z, Y\>$,  $\< X^2, XY, Z, Y\>$,  $\< X^3, Y, Z, Y\>$, seem to be zero.
\item $W=x^3+xy^6$, $\<Z, Z, Z, Y\>$, $\<W, W, Z,Y\>$. They do not seem to cancel each other. What happens?
\item $W=x^3y+y^7$, $\<Z, W, X^2, Y\>, \<X, Y^2, X^2, Y\>, \<Y, XY, X^2, Y\>, \<X^2, X^2, X^2, Y\>$. They don't cancel.
\end{itemize}
}

\begin{remark}
 If $W$ is of Brieskorn-Pham type,  there is no $\beta^0$ term and the quantum Euler class is easy to calculate.
 If $W$ is of chain type or loop type, the best scenario is that there are some cancellation happening between the coefficients of $\beta^0$. 
\begin{itemize}
\item In fact, this is the case when $W=x^3+xy^8$. 
\item For $W=x^3+xy^6$, there are some typo in \cite{Francis}? and the cancellation should hold. 
\item For $W=x^3y+y^7$, it seems there is no cancellation.
\end{itemize}
\end{remark}
\fi

\subsubsection{A genus-$0$ $5$-point correlator}

We investigate the nonvanishing contributions on the quantum Euler class~\eqref{quantum-euler-j2}.
We have an explicit formula for the following genus-$0$ $5$-point correlator.
\begin{proposition}
\label{nonvanishing-appendix}
%[A. E. Francis]
Let $W:\C^2\to \C$ be a degree $d$ invertible %Fermat
polynomial with $\widehat{c}_W\geq 1$. %$d\geq 4$ and the weights $wt(x_i)={w_i\over d}\neq {1\over 2}, i=1, 2.$
For the LG pair $(W, \<J\>)$,
%the FJRW invariant $C_d:=\<\alpha_{d-1},  \alpha_{d-2},  \alpha_{2},  \alpha_{2},  \alpha_{2}\>_{0,5}$ is nonzero.
we have a nonzero FJRW invariant
\begin{equation}
\label{definition-5-point}
C_d:=\<\alpha_{d-1},  \alpha_{d-2},  \alpha_{2},  \alpha_{2},  \alpha_{2}\>_{0,5}={w_1w_2\over d^2}.
\end{equation}
 \end{proposition}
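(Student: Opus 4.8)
The plan is to apply the concavity axiom \eqref{concavity-axiom} and reduce the correlator to a Chern-number computation on $\overline{\mathcal{M}}_{0,5}$. First I would check the hypotheses. All five insertions are narrow, so only the selection rule \eqref{selection-rule} and the degree formula \eqref{virtual-cycle-degree} need attention. Since $\widehat{c}_W\geq 1$, \eqref{central-charge} forces $2(w_1+w_2)\leq d$, hence $2w_1<d$ and $2w_2<d$; thus $\theta_j(\alpha_2)=2w_j/d$, $\theta_j(\alpha_{d-2})=1-2w_j/d$, and $\theta_j(\alpha_{d-1})=(d-w_j)/d$. Substituting into \eqref{selection-rule} with $k=5$ gives $\deg\cL_1=\deg\cL_2=-2$, and the same data confirm $\sum_i\deg_{\C}\gamma_i=\widehat{c}_W+2$, as required by \eqref{virtual-cycle-degree}. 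A line bundle of total orbifold degree $-2$ has no sections on any genus-$0$ fiber, so the correlator is concave and \eqref{concavity-axiom} applies.

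Next I would identify the bundle. Orbifold Riemann-Roch in genus $0$ gives $h^1(\cL_j)=-\deg\cL_j-1=1$, so $L_j:=R^1\pi_*\cL_j$ is a line bundle and $R^1\pi_*(\cL_1\oplus\cL_2)$ has rank $2=\dim\overline{\mathcal{M}}_{0,5}$. Hence $c_{\mathrm{top}}\big((R^1\pi_*(\cL_1\oplus\cL_2))^\vee\big)=c_1(L_1)c_1(L_2)$, and \eqref{concavity-axiom} becomes $C_d=\frac{1}{\deg\mathrm{st}}\int_{\overline{\mathcal{W}}_{0,5}}c_1(L_1)c_1(L_2)$.

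The heart of the argument is to compute $c_1(L_j)=-\mathrm{ch}_1(R\pi_*\cL_j)$ by Grothendieck-Riemann-Roch, in the form of Chiodo's formula. Here one uses that each $\cL_j$ is a $d$-th root of $\omega_{\log}^{\,w_j}$ (true for Fermat, chain and loop alike, as one checks from the $W$-structure relations), so the relevant level is $s_j=w_j/d$; this is also why the final answer depends only on the weight system $(d;w_1,w_2)$ and not on the type of $W$. Chiodo's formula expresses $c_1(L_j)$ as a universal combination of $\kappa_1$, the $\psi_i$, and the boundary divisors, with coefficients built from $B_2(s_j)$ and the $B_2(\theta_j(\gamma_i))$.

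Finally I would pair the two classes. Writing $c_1(L_1)$ and $c_1(L_2)$ in the tautological ring and using the standard genus-$0$ numbers $\int_{\overline{\mathcal{M}}_{0,5}}\psi_i^2=1$, $\int_{\overline{\mathcal{M}}_{0,5}}\psi_i\psi_j=2$ together with the boundary pairings, and then dividing by $\deg\mathrm{st}$, I expect the expression to collapse to $w_1w_2/d^2$, matching \eqref{definition-5-point}. The hard part will be the bookkeeping in the Grothendieck-Riemann-Roch step: correctly incorporating the boundary (nodal) contributions, where the orbifold degree of $\cL_j$ redistributes among components, and fixing the normalization $\deg\mathrm{st}$ so that the $\kappa$-, $\psi$-, and boundary-intersection numbers assemble to the clean product $s_1s_2$. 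A useful consistency check is that $c_1(L_1)c_1(L_2)$ must be a strictly positive multiple of the point class, which is precisely the nonvanishing asserted in the proposition.
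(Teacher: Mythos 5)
Your proposal follows essentially the same route as the paper: verify concavity via the selection rule (getting $\deg\cL_1=\deg\cL_2=-2$, so each $R^1\pi_*\cL_j$ is a line bundle and the rank-$2$ top Chern class is the product $c_1(L_1)c_1(L_2)$), expand each $c_1$ by Chiodo's formula in $\kappa_1$, $\psi_i$ and boundary classes with $B_2$-coefficients, and pair on $\overline{\mathcal{M}}_{0,5}$. The only part you leave unexecuted is the final bookkeeping, which in the paper occupies a full table of intersection numbers and also requires a separate treatment of the case $4\max(w_1,w_2)>d$, where one boundary decoration's exponent leaves $[0,1]$ and the corresponding Bernoulli value must be shifted; the paper checks that the shifted terms cancel, so the answer $w_1w_2/d^2$ is indeed independent of the type of $W$, as you predicted.
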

 This correlator is concave and thus can be calculated by the formula~\eqref{concavity-axiom}. The calculation is quite complicated and will be presented in Section \ref{5-point-appendix}.
% We expect this equality \eqref{definition-5-point} also holds if $W$ is of chain type or loop type.

% Sometimes it is easy to find the value of this correlator using WDVV equations. Let us give two examples, one in Fermat case, another in chain case.

\subsubsection{Quantum Euler classes for Brieskorn-Pham polynomials}
When $W$ is a Brieskorn-Pham polynomial, we can simplify the formula~\eqref{quantum-euler-j2} further and get an explicit formula.
According to Lemma \ref{lemma-invariant-subspace}, if $W$ is a Brieskorn-Pham polynomial, the $G_W$-invariant subspace $\cH_{W, \<J\>}^{G_W}\subset \cH_{W, \<J\>}$ contains no broad element. So the second term in~\eqref{quantum-euler-j2} vanishes.
\begin{proposition}
Let $W=x_1^{a_1}+x_2^{a_2}=x_1^{\delta w_2}+x_2^{\delta w_1}$ be a Brieskorn-Pham polynomial with $\gcd(w_1, w_2)=1$ and $\widehat{c}_W\geq1$, then %$\delta w_1>2$ and $\delta w_2>2$, then %the quantum Euler class~\eqref{quantum-euler-j2} takes the form
$${\bf E}(t\alpha_2)=(\delta w_1 w_2-w_1-w_2-\delta)C_d {t^2\over 2!} \alpha_1+\mu\alpha_{d-1}.$$
\end{proposition}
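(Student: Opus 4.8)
The plan is to start from the restricted quantum Euler class formula \eqref{quantum-euler-j2}. Since $W$ is Fermat, Lemma~\ref{lemma-invariant-subspace} tells us $\cH_{W,\<J\>}^{G_W}$ contains no broad element, so the middle ($t^1$) term in \eqref{quantum-euler-j2} drops out entirely, leaving
$$
{\bf E}(t\alpha_2)=\frac{t^2}{2!}\Big(\sum_i B_i\Big)\alpha_1+\mu\,\alpha_{d-1},
$$
with $B_i=\<\phi_i,\phi^i,\alpha_{d-1},\alpha_2,\alpha_2\>_{0,5}$. The last coefficient $\mu$ is already the rank of $\cH_{W,\<J\>}$, which matches the claimed formula; so the entire content of the proposition is the evaluation of the coefficient $\tfrac{1}{2}\sum_i B_i$ and showing it equals $(\delta w_1w_2-w_1-w_2-\delta)C_d$.

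The key step is to compute each $B_i$. First I would observe that the degree formula \eqref{virtual-cycle-degree} together with $\deg_\C\alpha_{d-1}=\widehat c_W$ forces $\deg_\C\phi_i+\deg_\C\phi^i=\widehat c_W$, and that the selection rule \eqref{selection-rule} strongly constrains which pairs $(\phi_i,\phi^i)$ can give a nonzero $5$-point invariant. I expect that the only surviving contributions come from narrow pairs $\phi_i=\alpha_m$, $\phi^i=\alpha_{d-m}$, since any broad insertion paired against $\alpha_{d-1},\alpha_2,\alpha_2$ will either violate the selection rule or be killed by $G_W$-invariance (broad elements other than $\beta_0,\beta_\delta$ are not $G_W$-invariant, and for Fermat $\beta_0,\beta_\delta$ do not exist as separate narrow-compatible pieces in this correlator). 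I would then reduce each narrow $B_i=\<\alpha_m,\alpha_{d-m},\alpha_{d-1},\alpha_2,\alpha_2\>_{0,5}$ to the universal value $C_d=\tfrac{w_1w_2}{d^2}$ from Proposition~\ref{nonvanishing-appendix}. The natural mechanism is a WDVV/reconstruction argument: using \eqref{wdvv-equation} one shows that every such nonvanishing narrow $5$-point correlator equals $C_d$ (up to the combinatorics of which $\alpha_m$ are admissible), so that $\sum_i B_i=2(\text{number of admissible narrow pairs})\cdot C_d$.

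The combinatorial bookkeeping is therefore the crux: I must count exactly how many homogeneous basis elements $\phi_i$ yield a nonvanishing $B_i$, and confirm that the total is $2(\delta w_1w_2-w_1-w_2-\delta)$. The count of narrow indices is $\mu_{\rm nar}=\delta w_1w_2-w_1-w_2+1$ from Table~\ref{table-basis}, and the broad elements of a Fermat polynomial number $\mu_{\rm bro}=\delta-1$; the selection rule \eqref{selection-rule} applied to the three fixed insertions $\alpha_{d-1},\alpha_2,\alpha_2$ will remove a controlled number of narrow indices (those $m$ for which the degrees of the line bundles $\cL_j$ fail to be integers, or for which the remaining moduli space is empty). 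I would carefully identify these excluded indices and check that exactly $w_1+w_2+\delta-1$ of the $\mu_{\rm nar}$ narrow pairs fail to contribute, giving $\delta w_1w_2-w_1-w_2-\delta$ surviving pairs and hence $\tfrac12\sum_i B_i=(\delta w_1w_2-w_1-w_2-\delta)C_d$.

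The main obstacle I anticipate is precisely this selection-rule counting: verifying that the integrality constraints in \eqref{selection-rule} for both line bundles $\cL_1,\cL_2$, combined with the degree constraint and $G_W$-invariance, single out exactly the claimed number of admissible narrow pairs and that each admissible one genuinely evaluates to $C_d$ rather than to some $m$-dependent multiple. Establishing the uniform value $C_d$ for all admissible narrow $5$-point correlators — as opposed to merely their nonvanishing — is where the reconstruction argument must be made airtight, and I would lean on the concavity axiom \eqref{concavity-axiom} together with the explicit computation behind Proposition~\ref{nonvanishing-appendix} to pin down the common value.
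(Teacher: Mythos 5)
There is a genuine gap, and it sits exactly where your proposal waves its hands: the broad sector. You assert that any broad insertion paired against $\alpha_{d-1},\alpha_2,\alpha_2$ is killed by the selection rule or by $G_W$-invariance, but $G_W$-invariance only forces vanishing when the tensor product of the insertions fails to be $G_W$-invariant, and the pair $\beta_j\otimes\beta^j$ \emph{is} $G_W$-invariant (the characters of a broad element and its dual cancel). The paper proves a separate ``broad contribution'' lemma via WDVV showing
$\<\beta_j,\beta^j,\alpha_{d-1},\alpha_2,\alpha_2\>_{0,5}=-C_d$
for each of the $\mu_{\rm bro}=\delta-1$ broad basis elements. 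These terms contribute $-(\delta-1)C_d$ to $\sum_i B_i$, and that is precisely what converts the narrow total into the claimed coefficient: the narrow sum alone gives $(\mu_{\rm nar}-2)C_d=(\delta w_1w_2-w_1-w_2-1)C_d$ (every $m\in{\bf Nar}$ contributes $C_d$ except $m=1,d-1$, which vanish by the string equation since $\alpha_1$ is the unit), and adding $-(\delta-1)C_d$ yields $(\delta w_1w_2-w_1-w_2-\delta)C_d$. Without the broad terms you cannot reach the $-\delta$ in the coefficient.

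Your attempted repair --- excluding $w_1+w_2+\delta-1$ narrow pairs via the selection rule --- fails on both counts. Conceptually, the selection rule excludes no narrow pairs here: for every $m\in{\bf Nar}$ one computes $\deg\cL_j=3q_j-\theta_j(\alpha_m)-\theta_j(\alpha_{d-m})-\theta_j(\alpha_{d-1})-2\theta_j(\alpha_2)=-2$ for $j=1,2$, an integer independent of $m$, so all narrow pairs pass. Arithmetically, $\mu_{\rm nar}-(w_1+w_2+\delta-1)=\delta w_1w_2-2w_1-2w_2-\delta+2$, which is not $\delta w_1w_2-w_1-w_2-\delta$ in general. The part of your plan that does match the paper is the treatment of the narrow sector: the uniform value $C_d$ for all $\<\alpha_{d-1},\alpha_{d-m},\alpha_m,\alpha_2,\alpha_2\>_{0,5}$ is indeed established by WDVV manipulations anchored to the concavity computation of $C_d$, exactly as you propose. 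You need to add the broad lemma (also a short WDVV argument, replacing $(\gamma_1,\gamma_2,\gamma_3,\gamma_4)$ by $(\beta_j,\beta^j,\alpha_2,\alpha_{d-2})$) and redo the bookkeeping.
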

It suffices to compute the first term in~\eqref{quantum-euler-j2}.
The result follows from two lemmas below.
%The coefficients $C_i$ and $D_j$ can be related by WDVV equations.
%We denote the following genus-0 5-point FJRW invariant by $$C_d:=\<\alpha_{d-1},  \alpha_{d-2},  \alpha_{2},  \alpha_{2},  \alpha_{2}\>_{0,5}.$$

\begin{lemma}
[Narrow comtribution]
\label{lemma-narrow}
For any $i\in {\bf Nar}\backslash\{1, 2, d-2, d-1\}$, we have
$$\<\alpha_{d-1},  \alpha_{d-i},  \alpha_{i},  \alpha_{2},  \alpha_{2}\>_{0,5}=C_d.$$
\end{lemma}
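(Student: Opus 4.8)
The plan is to recognize that, exactly like $C_d$ in Proposition~\ref{nonvanishing-appendix}, every correlator in the statement is concave, and then to show that the concavity computation produces a value that is independent of $i$. Write $B_{\alpha_i}:=\langle\alpha_{d-1},\alpha_{d-i},\alpha_i,\alpha_2,\alpha_2\rangle_{0,5}$; the assertion is that $B_{\alpha_i}=C_d$ for all $i\in\mathbf{Nar}\setminus\{1,2,d-2,d-1\}$, and note that $C_d$ is itself the member $i=2$ of this family.

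First I would verify concavity and pin down the numerical data. All five insertions are narrow, and feeding them into the selection rule \eqref{selection-rule} with $k=5$ gives, for $j=1,2$,
$$\deg\cL_j=3q_j-\big[(1-q_j)+(1-\{iq_j\})+\{iq_j\}+2\cdot 2q_j\big]=-2,$$
independently of $i$: the complementary multiplicities $\{iq_j\}$ and $1-\{iq_j\}$ at the two markings carrying $\alpha_i$ and $\alpha_{d-i}$ always add to $1$. Since both line bundles have negative degree on every genus-$0$ fiber, $H^0(C,\cL_j)=0$ and the correlator is concave, hence computed by the concavity axiom \eqref{concavity-axiom}. Because $W$ is Fermat, the two universal line bundles decouple: $R^1\pi_\ast\cL_1$ and $R^1\pi_\ast\cL_2$ are each locally free of rank one, and the relevant top Chern class factors as a product of their first Chern classes. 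This is precisely the data underlying the computation of $C_d$, so the machinery of Section~\ref{5-point-appendix} applies verbatim.

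The heart of the argument is to show that the resulting integral over $\overline{\mathcal{M}}_{0,5}$ is insensitive to $i$. Comparing $B_{\alpha_i}$ with $C_d$, the \emph{only} difference is the orbifold structure of $\cL_j$ at the two markings carrying $\alpha_{d-i},\alpha_i$, whose multiplicities are $(1-\{iq_j\},\{iq_j\})$ rather than $(1-2q_j,2q_j)$. I would run the orbifold Grothendieck--Riemann--Roch (Chiodo) evaluation of $c_1(R^1\pi_\ast\cL_j)$ from Section~\ref{5-point-appendix} and check that, once the two factors are multiplied, pushed to $\overline{\mathcal{M}}_{0,5}$ and integrated, the dependence on the split $(1-\{iq_j\},\{iq_j\})$ cancels, leaving the value $C_d=w_1w_2/d^2=q_1q_2$ for every admissible $i$. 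The restriction $i\in\mathbf{Nar}\setminus\{1,2,d-2,d-1\}$ guarantees we stay away from the degenerate splits, where a marking would cease to be narrow or an extra boundary contribution would appear.

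I expect the main obstacle to be exactly this $i$-independence, since the marking multiplicities genuinely enter the $\psi$-class coefficients in Chiodo's formula and must be shown to drop out after integration. A clean alternative, if the direct cancellation is awkward, is to set up the WDVV equation \eqref{wdvv-equation} at $\bt=t\alpha_2$ with external insertions $\alpha_{d-i},\alpha_i$ and $\gamma_3=\alpha_2,\gamma_4=\alpha_{d-2}$, and to read off the coefficient of $t^2$. Here every term with a single broad insertion drops out by the $G_W$-invariance property, since by Lemma~\ref{lemma-invariant-subspace} a Fermat polynomial has no $G_W$-invariant broad element, and the remaining spurious terms vanish on degree grounds via \eqref{virtual-cycle-degree}; this expresses $B_{\alpha_i}$ in terms of $B_{\alpha_{i\pm1}}$ plus already-known quantities, telescoping down to the base case $C_d$. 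Either route terminates at the uniform value $C_d$.
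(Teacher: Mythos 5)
Your fallback route (WDVV at $\bt=t\alpha_2$ with external insertions $\alpha_i,\alpha_{d-i},\alpha_2,\alpha_{d-2}$) is in fact the paper's method, so the overall strategy is sound; but as written the proposal has a genuine gap, because the step you defer in each route is precisely where all the work lies. For the primary route, the claimed $i$-independence of the Chiodo/concavity computation is not something that "applies verbatim": the node decorations $\gamma_+^K$ now involve classes like $J^{2-i}$, $J^{1-i}$, $J^{i+1}$, whose Bernoulli inputs $B_2(\{(2-i)q_j\})$, etc.\ genuinely depend on $i$ and require case analysis on fractional parts (the analogue of the $b_4^x$ versus $\widetilde b_4^x$ distinction already needed for $C_d$ itself, but now for arbitrary $i$, and with some $\gamma_+^K$ becoming broad when $i\equiv 2$ or $i\equiv -1$ modulo $a_j$). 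Asserting that the dependence "cancels after integration" is asserting the lemma, not proving it; the paper does not attempt this computation for general $i$.

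For the WDVV route, two corrections. First, the equation does not "telescope through $B_{\alpha_{i\pm1}}$": extracting the $t^2$-coefficient gives $\tfrac12 B_{\alpha_i}+\tfrac12 C_d$ on one side directly, so the identity $B_{\alpha_i}=C_d$ falls out in one step \emph{provided} the other side is evaluated. Second, the other side is not disposed of by $G_W$-invariance and degree reasons alone. Which terms survive depends on whether $i-1$ and $i+1$ lie in $\mathbf{Nar}$, and the paper accordingly splits into three cases: when $i-1\in\mathbf{Nar}$ (or symmetrically $i+1\in\mathbf{Nar}$) it uses a different WDVV equation, with $(\alpha_{d-1},\alpha_{d-i},\alpha_2,\alpha_{i-1})$, where the surviving term is $\langle\alpha_{d-i},\alpha_2,\alpha_{i-1}\rangle_{0,3}=1$ times $\tfrac12 B_{\alpha_i}$; and when $i\pm1\notin\mathbf{Nar}$ the right-hand side of your equation contains a nonzero product of two concave genus-$0$ $4$-point correlators $\langle\alpha_i,\alpha_2,\alpha_{d-2-i},\alpha_2\rangle_{0,4}\cdot\langle\alpha_{i+2},\alpha_{d-2},\alpha_{d-i},\alpha_2\rangle_{0,4}$, which must be computed by the concavity axiom to equal $\tfrac{w_1}{d}\cdot\tfrac{w_2}{d}=C_d$, yielding $\tfrac12 B_{\alpha_i}+\tfrac12 C_d=C_d$. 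These surviving correlators are neither zero nor "already known", so your dismissal of the remaining terms does not go through; supplying this case analysis and the $4$-point evaluations is what completes the proof.
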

\begin{proof}
%When $W$ is of Fermat type, if $k\notin{\bf Nar}$, using the definition of ${\bf Nar}$ in~\eqref{narrow-index},  we have $\delta w_1\mid k$ or $\delta w_2\mid k$ (recall that $w_1$ and $w_2$ are coprime). 
%So the difference of two broad indices is at least $2.$
For $i\in {\bf Nar}\backslash\{1, 2, d-2, d-1\}$, at least one of the following cases is true.
 \begin{enumerate}
 \item $i-1\in {\bf Nar}$.
 \item $i+1\in {\bf Nar}$.
 \item $i-1, i+1\notin{\bf Nar}$.
 \end{enumerate}
 
 For the first case, we replace $(\gamma_1, \gamma_2, \gamma_3, \gamma_4)$ by $(\alpha_{d-1}, \alpha_{d-i}, \alpha_2, \alpha_{i-1})$ in~\eqref{wdvv-equation}, and obtain
\begin{align*}
\sum_{a}\LL\alpha_{d-1}, \alpha_{d-i}, \phi_a\RR_{0,3}(t) \LL \phi^a, \alpha_2, \alpha_{i-1}\RR_{0,3}(t)\\
=
\sum_{a}\LL\alpha_{d-1}, \alpha_2, \phi_a\RR_{0,3}(t)\LL \phi_a, \alpha_{d-i}, \alpha_{i-1}\RR_{0,3}(t).
\end{align*}
We compare the coefficients of $t^2$ on both sides. 
Let us consider all possible nontrivial contributions. 
By the $G_W$-invariance property, $\phi_a$ can not be a broad element as broad element are not $G_W$ invariant for the Brieskorn-Pham polynomial. 
By the selection rule~\eqref{selection-rule}, $\theta_j(\phi_a)$ is uniquely fixed by the rest of insertions. 
So there is at most one choice of narrow element $\phi_a$ which could make nontrivial contribution.
By the degree formula~\eqref{virtual-cycle-degree},  we have $\<\alpha_{d-1}, \alpha_{d-i}, \phi_a\>_{0,3}=0$ since $i\neq d-1$.
So the contribution of $t^2$-term on the left-hand side is given by
\begin{align*}
&{1\over 2}\<\alpha_{d-1},  \alpha_{d-i},  \alpha_{i},  t\alpha_{2},  t\alpha_{2}\>_{0,5}\< \alpha_{d-i},  \alpha_{2},  \alpha_{i-1}\>_{0,3}\\
&+\<\alpha_{d-1},  \alpha_{d-i},  \alpha_{i+1}, t\alpha_{2}\>_{0,4}\<\alpha_{d-i-1},  \alpha_{2},  \alpha_{i-1}, t\alpha_{2}\>_{0,4}+0.
%\\=&{t^2\over 2}\<\alpha_{d-1},  \alpha_{d-i},  \alpha_{i},  \alpha_{2},  \alpha_{2}\>_{0,5}\times 1+0\times 0+0.
\end{align*}
Since $i-1\in {\bf Nar}$, $\<\alpha_{d-i},  \alpha_{2},  \alpha_{i-1}\>_{0,3}=1$ follows from concavity axiom, in which case, the underlying moduli space is just a point. 
For the two genus-$0$ $4$-point correlators, if $i+1\notin {\bf Nar}$, then they vanish by $G_W$-invariance property. 
If $i+1\in {\bf Nar}$, then they vanish by checking the degree formula~\eqref{virtual-cycle-degree}.
So the contribution of $t^2$-term on the left-hand side is given by ${t^2\over 2}\<\alpha_{d-1},  \alpha_{d-i},  \alpha_{i},  \alpha_{2},  \alpha_{2}\>_{0,5}.$

Similarly, the contribution of the $t^2$-term on the right-hand side is given by
\begin{align*}
&{1\over 2}\<\alpha_{d-1},  \alpha_{2},  \alpha_{d-2},  t\alpha_{2},  t\alpha_{2}\>_{0,5}\< \alpha_{2},  \alpha_{d-i},  \alpha_{i-1}\>_{0,3}\\
&+\<\alpha_{d-1},  \alpha_{2},  \alpha_{d-1}, t\alpha_{2}\>_{0,4}\<\alpha_{1},  \alpha_{d-i},  \alpha_{i-1}, t\alpha_{2}\>_{0,4}+0\\
=&{t^2\over 2}\<\alpha_{d-1},  \alpha_{2},  \alpha_{d-2},  \alpha_{2},  \alpha_{2}\>_{0,5}.
\end{align*}
Thus we have
$$\<\alpha_{d-1},  \alpha_{d-i},  \alpha_{i},  \alpha_{2},  \alpha_{2}\>_{0,5}=\<\alpha_{d-1},  \alpha_{2},  \alpha_{d-2},  \alpha_{2},  \alpha_{2}\>_{0,5}=C_d.$$

For the second case, we recognize that both $d-i, (d-i)-1\in {\bf Nar}$. So we can use the argument in the first case for $\alpha_{d-i}$ instead of for $\alpha_i$, and obtain the same result.

For the last case, we replace $(\gamma_1, \gamma_2, \gamma_3, \gamma_4)$ by $(\alpha_{i}, \alpha_{d-i}, \alpha_{2}, \alpha_{d-2})$ in the WDVV equation~\eqref{wdvv-equation}, and obtain
\begin{align*}
\sum_{a}\LL\alpha_{i}, \alpha_{d-i}, \phi_a\RR_{0,3}(t) \LL \phi^a, \alpha_2, \alpha_{d-2}\RR_{0,3}(t)\\
=
\sum_{a}\LL\alpha_{i}, \alpha_2, \phi_a\RR_{0,3}(t)\LL \phi_a, \alpha_{d-2}, \alpha_{d-i}\RR_{0,3}(t).
\end{align*}
Again, we compare the coefficients of $t^2$ on both sides. 
After deleting the vanishing correlators, we obtain a formula
\begin{align*}
&{1\over2}\<\alpha_{i}, \alpha_{d-i},  \alpha_{d-1},  \alpha_{2},  \alpha_{2}\>_{0,5}+{1\over2}\<\alpha_{d-1},  \alpha_{2},  \alpha_{2},  \alpha_{d-2},  \alpha_{2}\>_{0,5}\\
=&\<\alpha_{i},  \alpha_{2}, \alpha_{d-2-i},  \alpha_{2}\>_{0,4}\<\alpha_{i+2},  \alpha_{d-2},  \alpha_{d-i},  \alpha_{2}\>_{0,4}.
\end{align*}
Since $i-1, i+1\notin{\bf Nar}$, each element in $\{i-1 ,i+1\}$ is either divisible by $\delta w_1$ or $\delta w_2$. 
Recall that we assume $\delta w_1>2$ and $\delta w_2>2$.
So neither $i-1, i+1$ can be divisible by both $\delta w_1$ or $\delta w_2$. Without loss of generality, we assume $a_2=\delta w_1\mid (i-1)$ and $a_1=\delta w_2\mid (i+1)$.
So the two genus-$0$ $4$-point correlators on the right-hand side are both concave. 
More explicitly, we have 
$$\begin{dcases}
\Big(\theta_1(\alpha_{i}),  \theta_1(\alpha_{2}), \theta_1(\alpha_{d-2-i}),  \theta_1(\alpha_{2})\Big)= \Big({a_1-1\over a_1}, {2\over a_1}, {a_1-1\over a_1}, {2\over a_1}\Big), \\
\Big(\theta_2(\alpha_{i}),  \theta_2(\alpha_{2}), \theta_2(\alpha_{d-2-i}),  \theta_2(\alpha_{2})\Big)= \Big({1\over a_2}, {2\over a_2}, {a_2-3\over a_2}, {2\over a_2}\Big).
\end{dcases}
$$
We have $(\deg \cL_1, \deg \cL_2)=(-2, -1).$ 
Following the calculation in~\cite[Lemma 6.6]{HLSW}, the concavity axiom implies
$$\<\alpha_{i},  \alpha_{2}, \alpha_{d-2-i},  \alpha_{2}\>_{0,4}={1\over a_1}={w_1\over d}.$$
Similarly, we have 
$$\<\alpha_{i+2},  \alpha_{d-2},  \alpha_{d-i},  \alpha_{2}\>_{0,4}={1\over a_2}={w_2\over d}.$$
For the other case, $a_2=\delta w_1\mid (i+1)$ and $a_1=\delta w_2\mid (i-1)$, the values of two genus-$0$ $4$-point correlators are ${w_2\over d}$ and ${w_1\over d}.$
So the product is always ${w_1w_2\over d^2}$ and the result follows.
\end{proof}

%\newpage
\begin{lemma}
[Broad contribution]
\label{lemma-broad}
For any $\beta_j=x_1^{j w_2-1}x_2^{(\delta-j) w_1-1}dx_1\wedge dx_2$, we have
$$\<\beta_j, \beta^j, \alpha_{d-1}, \alpha_2, \alpha_2\>_{0,5}=-C_d.$$
\end{lemma}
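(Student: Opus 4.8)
\emph{The plan.} I would read off the value of
$I:=\langle\beta_j,\beta^j,\alpha_{d-1},\alpha_2,\alpha_2\rangle_{0,5}$ from a single WDVV equation, restricted to $\bt=t\alpha_2$, by comparing the coefficients of $t^2$. Concretely, I would feed the quadruple $(\gamma_1,\gamma_2,\gamma_3,\gamma_4)=(\beta_j,\beta^j,\alpha_2,\alpha_{d-2})$ into~\eqref{wdvv-equation}:
\[
\sum_a \LL\beta_j,\beta^j,\phi_a\RR_{0,3}(t\alpha_2)\LL\phi^a,\alpha_2,\alpha_{d-2}\RR_{0,3}(t\alpha_2)
=\sum_a \LL\beta_j,\alpha_2,\phi_a\RR_{0,3}(t\alpha_2)\LL\phi^a,\beta^j,\alpha_{d-2}\RR_{0,3}(t\alpha_2).
\]
Here $\beta^j$ is the dual of $\beta_j$; since the pairing respects the splitting~\eqref{narrow-broad}, $\beta^j$ is again broad, and by Lemma~\ref{lemma-invariant-subspace} it is not $G_W$-invariant (for Fermat the only $G_W$-invariant broad classes would be $\beta_0,\beta_\delta$, which do not occur). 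Two facts are used throughout: all correlators are symmetric (the permutation-invariance Remark), and every broad class lies in the untwisted sector $\cH_{\mathrm{id}}$, so $\theta_1(\beta_k)=\theta_2(\beta_k)=0$. Since $\widehat c_W\ge 1$ forces each Fermat exponent to be at least $3$, there is no weight-$\tfrac12$ variable, hence $2w_i<d$, so $\theta_i(\alpha_2)=2w_i/d$ and $\alpha_{d-2}$ is narrow.

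\emph{The left-hand side at order $t^2$.} By the degree formula~\eqref{virtual-cycle-degree} the order-$m$ term of $\LL\beta_j,\beta^j,\phi_a\RR_{0,3}(t\alpha_2)$ can be nonzero only when $\deg_\C\phi_a=m\,\widehat c_W/2$, and a short computation shows the matching factor then contributes at order $2-m$; so only $\deg_\C\phi_a\in\{0,\widehat c_W/2,\widehat c_W\}$ survive. By Lemma~\ref{degree-lemma} the extreme degrees force $\phi_a=\alpha_1$ and $\phi_a=\alpha_{d-1}$. The term $\phi_a=\alpha_{d-1}$ gives $\tfrac12 I\cdot\langle\alpha_2,\alpha_{d-2}\rangle=\tfrac12 I$, and $\phi_a=\alpha_1$ gives $\langle\beta_j,\beta^j\rangle\cdot\tfrac12\langle\alpha_{d-1},\alpha_2,\alpha_{d-2},\alpha_2,\alpha_2\rangle=\tfrac12 C_d$ by~\eqref{definition-5-point} and permutation invariance. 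For the middle degree $\widehat c_W/2$ the only candidate is the order-$(1,1)$ product $\langle\beta_j,\beta^j,\phi_a,\alpha_2\rangle_{0,4}\langle\phi^a,\alpha_2,\alpha_{d-2},\alpha_2\rangle_{0,4}$: if $\phi_a$ is narrow, the selection rule~\eqref{selection-rule} gives $\deg\cL_i=-\theta_i(\phi_a)\notin\Z$ (the two broad insertions contribute $\theta_i=0$), killing the first factor; if $\phi_a$ is broad, then $\phi^a$ is a single non-$G_W$-invariant broad insertion sitting among narrow classes in the second factor, which vanishes by $G_W$-invariance. Hence the left-hand side equals $\tfrac12 I+\tfrac12 C_d$.

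\emph{The right-hand side vanishes.} Each summand carries the broad class $\beta_j$ in its first factor and $\beta^j$ in its second. If $\phi_a$ is narrow, the first factor has a single broad insertion and vanishes by $G_W$-invariance. If $\phi_a$ is broad, every insertion of the first factor lies in $\cH_{\mathrm{id}}$ or is an $\alpha_2$, so the sector product is $J^{2(m+1)}$ and the selection rule demands $d\mid 2(m+1)$; for $d>4$ and the relevant orders $m\le 2$ this forces $m=2$, whence the matching second factor sits in order $t^0$ and the resulting three-point correlator vanishes by~\eqref{virtual-cycle-degree}. Comparing the $t^2$-coefficients then yields $\tfrac12 I+\tfrac12 C_d=0$, i.e.\ $I=-C_d$.

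\emph{Main obstacle.} The genuine work is the vanishing bookkeeping above: one must play the selection rule~\eqref{selection-rule} (which bites precisely because broad classes sit in the untwisted sector and contribute $\theta_i=0$) against $G_W$-invariance (which annihilates any correlator carrying a lone non-invariant broad class) to clear every term except $\tfrac12 I$ and $\tfrac12 C_d$. The one case the argument does not cover is $d=4$, i.e.\ $W=x^4+y^4$, where $J^{-4}=\mathrm{id}$ and $\alpha_{d-2}=\alpha_2$ degenerate the analysis; this single low-rank instance I would dispatch directly from the concavity axiom~\eqref{concavity-axiom} and the explicit products on $\cH_{W,\<J\>}$.
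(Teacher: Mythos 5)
Your proof follows exactly the paper's route: the same WDVV substitution $(\beta_j,\beta^j,\alpha_2,\alpha_{d-2})$ and the same comparison of $t^2$-coefficients, with the vanishing bookkeeping (which the paper leaves implicit) carried out correctly. The only quibble is that the selection rule~\eqref{selection-rule} on the right-hand side actually forces $d\mid(m+1)$ rather than $d\mid 2(m+1)$ (since $\gcd(w_1,w_2)=1$), which kills every broad term outright for $d\ge 4$ and makes your separate treatment of $d=4$ unnecessary.
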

\begin{proof}
We replace $(\gamma_1, \gamma_2, \gamma_3, \gamma_4)$ by $(\beta_j, \beta^j, \alpha_2, \alpha_{d-2})$ in the WDVV equation~\eqref{wdvv-equation}, and obtain
\begin{align*}
\sum_{a}\LL\beta_j, \beta^j, \phi_a\RR_{0,3}(t)\cdot \LL \phi^a, \alpha_2, \alpha_{d-2}\RR_{0,3}(t)
\\
=
\sum_{a}\LL\beta_j, \alpha_2, \phi_a\RR_{0,3}(t)\cdot\LL \phi_a, \beta^j, \alpha_{d-2}\RR_{0,3}(t).
\end{align*}
We can compare the coefficients of $t^2$ on both sides and obtain
%For the left-hand side, the argument in Lemma \ref{three-broad-calculation} shows it is
%So we obtain {\red (be careful about the nonvanishing $4$-point and $6$-point correlators)}
\begin{align*}
%=&\sum_{a}\LL\beta_j, \beta^j, \phi_a\RR_{0,3}(t)\cdot \LL \phi^a, \alpha_2, \alpha_{d-2}\RR_{0,3}(t)\\
%&\LL\beta_j, \beta^j, \alpha_{d-1}\RR_{0,3}(t)\cdot\LL \alpha_1, \alpha_2, \alpha_{d-2}\RR_{0,3}(t)+\LL\beta_j, \beta^j, \alpha_{1}\RR_{0,3}(t)\cdot\LL \alpha_{d-1}, \alpha_2, \alpha_{d-2}\RR_{0,3}(t)\\
{t^2\over 2!}\left(\<\beta_j, \beta^j, \alpha_{d-1}, \alpha_2, \alpha_2\>_{0,5}+\<\alpha_{d-1},   \alpha_{2}, \alpha_{d-2},  \alpha_{2},  \alpha_{2}\>_{0,5}\right)=0.
\end{align*}
%$$\LL\beta_j, \beta^j, \alpha_{d-1}\RR_{0,3}(t)\cdot\LL \alpha_1, \alpha_2, \alpha_{d-2}\RR_{0,3}(t)+\LL\beta_j, \beta^j, \alpha_{1}\RR_{0,3}(t)\cdot\LL \alpha_{d-1}, \alpha_2, \alpha_{d-2}\RR_{0,3}(t).$$
%When $W=x_1^{a_1}+x_2^{a_2}$,  \textcolor{red}{the right-hand side vanishes according to ???.} 
%Thus we obtain 
%\begin{equation}\label{broad-nonvanishing}\<\beta_j, \beta^j, \alpha_{d-1}, \alpha_2, \alpha_2\>_{0,5}=-\<\alpha_{d-1},  \alpha_{d-2},  \alpha_{2},  \alpha_{2},  \alpha_{2}\>_{0,5}. \end{equation}
Now the result follows from the definition of $C_d$ in~\eqref{definition-5-point}.
\end{proof}

\subsection{A proof of Theorem \ref{main}}
Now we calculate the quantum multiplication of ${\bf E}(t\alpha_2)\star_{t\alpha_2}$ for the Brieskorn-Pham case. %We prove it is a unit when $t\neq 0$.

\subsubsection{The multiplication of $\alpha_{d-1}$}
%By definition, the quantum multiplication $\alpha_{d-1}\star_{t\alpha_2}$ is determined by $\LL\alpha_{d-1}, \phi_i, \phi_j \RR_{0,3}(t\alpha_2)$. 
\begin{lemma}
%We consider $\LL\alpha_{d-1}, \phi_i, \phi_j \RR_{0,3}(t\alpha_2)$ for the LG pair $(W=x_1^{a_1}+x_2^{a_2}, \<J\>)$. 
The nonvanishing coefficients of $t^n$ in $\LL\alpha_{d-1}, \phi_i, \phi_j \RR_{0,3}(t\alpha_2)$ come from the following cases:
\begin{enumerate}
\item $n=0$, $\phi_i=\phi_j=\alpha_1$.
\item $n=2$, $\phi_j=\phi^i.$
\item $n=4$, $\phi_i=\phi_j=\alpha_{d-1}.$
%\item $n=1$ or $3$.
\end{enumerate} 
\end{lemma}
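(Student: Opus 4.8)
The plan is to let the degree axiom decide which powers $t^n$ can occur, and then let the selection rule together with $G_W$-invariance pin down the pair $(\phi_i,\phi_j)$ inside each admissible degree. The coefficient of $t^n$ in $\LL\alpha_{d-1},\phi_i,\phi_j\RR_{0,3}(t\alpha_2)$ is $\tfrac1{n!}\<\alpha_{d-1},\phi_i,\phi_j,\alpha_2,\dots,\alpha_2\>_{0,n+3}$ with $n$ copies of $\alpha_2$. First I would feed this into the degree formula~\eqref{virtual-cycle-degree} with $k=n+3$. Using $\deg_\C\alpha_{d-1}=\widehat c_W$ from~\eqref{degree-homogeneous} and $\deg_\C\alpha_2=\tfrac{w_1+w_2}{d}=1-\tfrac{\widehat c_W}{2}$, the identity collapses to $\deg_\C\phi_i+\deg_\C\phi_j=\tfrac n2\,\widehat c_W$. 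Since $0\le\deg_\C\phi\le\widehat c_W$ for every homogeneous $\phi$ by Lemma~\ref{degree-lemma}, this already forces $0\le n\le 4$.

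The two extreme values are immediate. For $n=0$ the relation gives $\deg_\C\phi_i=\deg_\C\phi_j=0$, and by Lemma~\ref{degree-lemma} only $\alpha_1$ has degree $0$, so $\phi_i=\phi_j=\alpha_1$; this is case (1). For $n=4$ it gives $\deg_\C\phi_i=\deg_\C\phi_j=\widehat c_W$, and only $\alpha_{d-1}$ attains the top degree, so $\phi_i=\phi_j=\alpha_{d-1}$; this is case (3). It remains to analyze $n=1,2,3$, and here the engine is the selection rule~\eqref{selection-rule}. Writing $q_s=w_s/d=1/a_s$, and using $\theta_s(\alpha_{d-1})=1-q_s$, $\theta_s(\alpha_2)=2q_s$, a direct substitution gives for $s=1,2$ the congruence $\theta_s(\phi_i)+\theta_s(\phi_j)\equiv(2-n)q_s\pmod 1$. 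For $n=2$ the right-hand side vanishes: in the narrow-narrow case $\phi_i=\alpha_p,\phi_j=\alpha_q$ this forces $\{pq_s\}+\{qq_s\}\in\Z$, hence (both summands being nonzero multiples of $q_s$) equal to $1$, i.e. $p+q\equiv0$ modulo $a_1$ and modulo $a_2$, hence modulo $d=\mathrm{lcm}(a_1,a_2)$; as $1\le p,q\le d-1$ this forces $q=d-p$, so $\phi_j=\alpha_{d-p}=\phi^i$. A mixed narrow-broad insertion carries a single broad factor and dies by $G_W$-invariance, while in the broad-broad case the product of $G_W$-characters $\lambda_1^{kw_2}\lambda_2^{(\delta-k)w_1}$ of $\beta_k$ and $\beta_l$ is trivial only for $l=\delta-k$, i.e. $\phi_j=\beta_{\delta-k}\propto\beta^k=\phi^i$. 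Thus $n=2$ contributes exactly through $\phi_j=\phi^i$, which is case (2).

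Finally I would rule out the odd values $n=1,3$. For narrow $\phi_i=\alpha_p,\phi_j=\alpha_q$ the bound $\{pq_s\}+\{qq_s\}\le 2(a_s-1)/a_s<2$ combined with the prescribed residue $(2-n)q_s$ pins each fractional sum uniquely, and substituting the resulting $\theta$-values back into $\deg_\C\phi_i+\deg_\C\phi_j=\tfrac n2\widehat c_W$ produces the numerical contradictions $2=1$ (for $n=1$) and $2=3$ (for $n=3$). A narrow-broad pair is forced by the degree relation to have its narrow factor equal to $\alpha_1$ (when $n=1$) or $\alpha_{d-1}$ (when $n=3$), so it again reduces to a single broad insertion and vanishes by $G_W$-invariance (or by the fundamental-class axiom once the unit $\alpha_1$ meets at least four marked points); a broad-broad pair has total degree $\widehat c_W\ne\tfrac n2\widehat c_W$. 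Hence no odd $n$ survives, which completes the list. The main obstacle I expect is the broad sector: the careful part is the explicit $G_W$-character computation that identifies the only surviving broad-broad term with the residue-dual $\beta^k$, together with the fractional-part bookkeeping that yields the clean contradictions in the odd cases.
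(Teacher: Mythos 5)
Your proof is correct and follows essentially the same route as the paper's: the degree formula bounds $n\le 4$ and settles $n=0,4$, the selection rule combined with the degree formula rules out the narrow contributions for odd $n$, and $G_W$-invariance disposes of the broad insertions (forcing duality when $n=2$). You supply more explicit detail than the paper does (the congruence $\theta_s(\phi_i)+\theta_s(\phi_j)\equiv(2-n)q_s \bmod 1$ and the $G_W$-character computation for $\beta_k\otimes\beta_l$), but the underlying argument is the same.
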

\begin{proof}
Again, the result follows from the selection rule~\eqref{selection-rule}, the degree formula~\eqref{virtual-cycle-degree}, and the $G_W$-invariance property. 
In fact, if the correlator $\<\alpha_{d-1}, \phi_i, \phi_j, \alpha_2, \ldots, \alpha_2\>_{n+3}$ does not vanish, the degree formula shows
$${n\cdot \widehat{c}_W\over 2}=\deg_\C\phi_i+\deg_\C\phi_j\leq 2 \cdot\widehat{c}_W.$$
So $n\leq 4$. Let us discuss in cases.
(1)
If $n=0$ or $n=4$, the result follows from~\eqref{degree-homogeneous}. 
(2)
If $n=1$, the selection rule implies $\phi_i, \phi_j$ 
must satisfy 
$$\theta_k(\phi_i)+\theta_k(\phi_j)\equiv {a_k-1\over a_k}\mod \mathbb{Z}, \quad \forall k=1, 2.$$
Together with $G_W$-invariance property, this implies that both $\phi_i, \phi_j$ are narrow elements and we obtain equalities for the degree of lines bundles $\deg\cL_1=\deg \cL_2=-2$. 
This will violate the degree formula.
(3)
If $n=2$, the selection rule implies if $\phi_i\in\cH_g$, then $\phi_j$ must be in $\cH_{g^{-1}}$. Furthermore, when both $\phi_i, \phi_j$ are broad elements, the $G_W$-invariance property guarantees that they must be dual to each other.
(4)
If $n=3$, the selection rule implies $\phi_i, \phi_j$ 
must satisfy 
$$\theta_k(\phi_i)+\theta_k(\phi_j)\equiv {a_k+1\over a_k}\mod \mathbb{Z}, \quad \forall k=1, 2.$$
This also implies  $\deg \cL_1=\deg \cL_2=-2$, which violates the degree formula.
\end{proof}

According to this lemma, we have
\begin{align*}
\alpha_{d-1}\star_t\alpha_{2}
&=\sum_{a}\LL\alpha_{d-1}, \alpha_{2}, \phi_a\RR_{0,3}(t)\phi^a\\
&={1\over 2!}\<\alpha_{d-1}, \alpha_{2}, \alpha_{d-2}, t\alpha_2, t\alpha_2\>_{0,5}\alpha_{d-2}.
%={C_d t^2\over 2!}\alpha_2.
\end{align*}

% for any $2\leq i\leq d-2$, we have
%$$\<\alpha_{i},  \alpha_{d-i}, \alpha_{d-1}, \alpha_{2},  \alpha_{2}\>_{0,5}=\<\alpha_{d-1},  \alpha_{d-2},  \alpha_{2},  \alpha_{2},  \alpha_{2}\>_{0,5}=C.$$

Replacing $(\gamma_1, \gamma_2, \gamma_3, \gamma_4)$ by $(\alpha_{d-1}, \alpha_{d-1}, \alpha_2, \alpha_{d-2})$ in~\eqref{wdvv-equation}, we obtain
\begin{align*}
\sum_{a}\LL\alpha_{d-1}, \alpha_{d-1}, \phi_a\RR_{0,3}(t) \LL \phi^a, \alpha_2, \alpha_{d-2}\RR_{0,3}(t)\\
=\sum_{a}\LL\alpha_{d-1}, \alpha_2, \phi_a\RR_{0,3}(t) \LL \phi^a, \alpha_{d-1}, \alpha_{d-2}\RR_{0,3}(t).
\end{align*}
This implies
\begin{equation}\label{7-point}
{1\over 4!}\<\alpha_{d-1}, \alpha_{d-1}, \alpha_{d-1}, \alpha_{2},  \alpha_{2},  \alpha_{2},  \alpha_{2}\>_{0,7}=\left({1\over 2!}\<\alpha_{d-1},  \alpha_{d-2},  \alpha_{2},  \alpha_{2},  \alpha_{2}\>_{0,5}\right)^2.
\end{equation}

%Finally, equation \eqref{quantum-product-five} follows from Lemma \ref{two-broad-calculation} and equation \eqref{broad-nonvanishing}.

By string equation \cite[Theorem 4.2.9]{FJR}, we have
\begin{equation}\alpha_1\star_{t}\phi=\phi, \quad \forall \phi\in \cH_{W, \<J\>};\label{quantum-product-one}
\end{equation}
Using Lemma~\ref{lemma-narrow}, Lemma~\ref{lemma-broad}, and formula~\eqref{7-point}, we have
\begin{proposition}
\label{quantum-product-narrow}
%When $d\geq 4$ and $W$ has no weight-${1\over 2}$ variable, 
Let  $W:\C^2\to \C$ be a degree $d$ invertible Brieskorn-Pham polynomial with $\widehat{c}_W\geq 1$, then
\begin{eqnarray}
%&&\beta_{j}\star_t \beta^j=-{C_dt^2\over 2!}\alpha_1+\alpha_{d-1}; \label{quantum-product-three}\\
%&&\alpha_i\star_t \alpha^i={C_dt^2\over 2!}\alpha_1+\alpha_{d-1}, \quad 2\leq i\leq d-2; \label{quantum-product-two}\\
&&\alpha_{d-1}\star_t \alpha_i=
\begin{dcases}
\alpha_{d-1}, & i=1;\\
{C_d t^2\over 2!}\alpha_i, & i\in {\bf Nar}\backslash\{1, d-1\};\\
{4\choose 2}{C_d^2t^4\over 4!}\alpha_1, & i=d-1.
\end{dcases}
\label{quantum-product-four}
\\
&&\alpha_{d-1}\star\beta_j=-{C_d t^2\over 2!}\beta_j. \label{quantum-product-five}
\end{eqnarray}
\end{proposition}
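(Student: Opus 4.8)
The plan is to establish the two formulas in Proposition~\ref{quantum-product-narrow} by combining the quantum-product definition~\eqref{quantum-product} with the vanishing and value computations already in hand. First I would compute $\alpha_{d-1}\star_t\alpha_i$ for the three ranges of $i$. By definition, $\alpha_{d-1}\star_t\alpha_i=\sum_a\LL\alpha_{d-1},\alpha_i,\phi_a\RR_{0,3}(t\alpha_2)\,\phi^a$, so the task reduces to identifying which three-point brackets $\LL\alpha_{d-1},\alpha_i,\phi_a\RR_{0,3}(t\alpha_2)$ are nonzero and evaluating them. The selection rule~\eqref{selection-rule} forces $\theta_k(\alpha_{d-1})+\theta_k(\alpha_i)+\theta_k(\phi_a)+n\cdot\tfrac{2}{a_k}\in\Z$, which (together with $G_W$-invariance, ruling out broad $\phi_a$ in the Fermat case) pins down $\phi_a$ uniquely for each admissible power $n$ of $t$. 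The degree formula~\eqref{virtual-cycle-degree} then selects the value of $n$: since $\deg\alpha_{d-1}=\widehat{c}_W$ and $\deg\alpha_2=\tfrac{w_1+w_2}{d}$, the bracket contributes at $t^n$ only for the $n$ making the total degree match, giving $n=0$ (with $\phi_a=\alpha_1$) for $i=1$, $n=2$ (with $\phi_a=\alpha_{d-i}$, i.e. $\phi^a=\alpha_i$) for the middle range, and $n=4$ (with $\phi_a=\alpha_{d-1}$) for $i=d-1$.

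For the middle range $i\in{\bf Nar}\backslash\{1,d-1\}$, the $t^2$-coefficient is $\tfrac{1}{2!}\<\alpha_{d-1},\alpha_i,\alpha_{d-i},\alpha_2,\alpha_2\>_{0,5}$, which by Lemma~\ref{lemma-narrow} equals $\tfrac{1}{2!}C_d$ whenever $i\notin\{2,d-2\}$; the boundary cases $i=2,d-2$ are handled by the same lemma's defining correlator $\<\alpha_{d-1},\alpha_2,\alpha_{d-2},\alpha_2,\alpha_2\>_{0,5}=C_d$, so all middle cases give coefficient $\tfrac{C_d t^2}{2!}$. This yields the second line of~\eqref{quantum-product-four}. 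For $i=d-1$, the relevant bracket is $\tfrac{1}{4!}\<\alpha_{d-1},\alpha_{d-1},\alpha_{d-1},\alpha_2,\alpha_2,\alpha_2,\alpha_2\>_{0,7}$ paired against $\alpha^{d-1}=\alpha_1$, and formula~\eqref{7-point} — obtained from the WDVV equation with insertions $(\alpha_{d-1},\alpha_{d-1},\alpha_2,\alpha_{d-2})$ — rewrites this $7$-point correlator as $\bigl(\tfrac{1}{2!}\<\alpha_{d-1},\alpha_{d-2},\alpha_2,\alpha_2,\alpha_2\>_{0,5}\bigr)^2=\bigl(\tfrac{C_d}{2!}\bigr)^2$. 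Repackaging the combinatorial factors gives the coefficient $\binom{4}{2}\tfrac{C_d^2 t^4}{4!}$, which is the third line. The case $i=1$ is immediate from the string equation~\eqref{quantum-product-one}, giving $\alpha_{d-1}\star_t\alpha_1=\alpha_{d-1}$.

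Finally, for~\eqref{quantum-product-five} I would expand $\alpha_{d-1}\star_t\beta_j=\sum_a\LL\alpha_{d-1},\beta_j,\phi_a\RR_{0,3}(t\alpha_2)\,\phi^a$. The selection rule together with $G_W$-invariance forces the surviving $\phi_a$ to be the element dual to $\beta_j$, so that $\phi^a=\beta_j$, and the degree formula pins the contribution to the $t^2$-term with coefficient $\tfrac{1}{2!}\<\alpha_{d-1},\beta_j,\beta^j,\alpha_2,\alpha_2\>_{0,5}$. By Lemma~\ref{lemma-broad} this $5$-point correlator equals $-C_d$, producing exactly $-\tfrac{C_d t^2}{2!}\beta_j$. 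I expect the main obstacle to lie not in any single evaluation but in the bookkeeping of the selection rule and degree constraints: one must argue carefully that for each target power of $t$ there is \emph{exactly} one admissible intermediate element $\phi_a$ and that all other a priori candidate correlators vanish (for instance, the odd-$n$ brackets ruled out by the $\deg\cL_1=\deg\cL_2=-2$ obstruction, and the broad intermediate states ruled out by $G_W$-invariance). Once this uniqueness is pinned down, the identification of coefficients with $C_d$ follows directly from Lemmas~\ref{lemma-narrow} and~\ref{lemma-broad} and the WDVV identity~\eqref{7-point}.
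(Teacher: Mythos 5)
Your proposal is correct and follows essentially the same route as the paper: the string equation for $i=1$, a selection-rule/degree/$G_W$-invariance argument isolating the $t^0$, $t^2$, $t^4$ contributions (this is exactly the content of the paper's preparatory lemma on the nonvanishing coefficients of $\LL\alpha_{d-1},\phi_i,\phi_j\RR_{0,3}(t\alpha_2)$), then Lemma~\ref{lemma-narrow} and the defining correlator $C_d$ for the middle range, the WDVV-derived identity~\eqref{7-point} for $i=d-1$, and Lemma~\ref{lemma-broad} for the broad sector. The coefficient bookkeeping (e.g. $\binom{4}{2}\tfrac{C_d^2}{4!}=\bigl(\tfrac{C_d}{2!}\bigr)^2$) also checks out.
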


\subsubsection{A proof of Theorem \ref{main}}
%\subsubsection{Quantum multiplication of  the quantum Euler class}

%We denote by 
Let $\lambda:=\delta w_1 w_2-w_1-w_2-\delta.$
Using \eqref{quantum-product-one}, \eqref{quantum-product-four} and \eqref{quantum-product-five}, the multiplication ${\bf E}(t\alpha_2)\star_{t\alpha_2}$ on the basis
$$\{\alpha_1, \alpha_{d-1}, \alpha_2 \ldots, \alpha_{d-2}, \beta_1, \ldots,\beta_{d-1}\}$$
is given by the matrix
$$\begin{pmatrix}
\lambda C_d {t^2\over 2!}& \mu {4\choose 2}C_d^2{t^4\over 4!}\\
\mu &\lambda C_d {t^2\over 2!}
\end{pmatrix}\oplus\bigoplus_{m\in {\bf Nar}\backslash\{1, d-1\}} (\lambda+\mu) C_d {t^2\over 2!}\cdot {\rm Id}_{\cH_{J^m}}\bigoplus
(\lambda-\mu) C_d {t^2\over 2!}\cdot {\rm Id}_{\cH_{\bf bro}}.$$
By Proposition \ref{nonvanishing-appendix}, $C_d\neq 0$.
%Recall that up to permutation, we assume $W$ is neither $x_1^3+x_2^{15}$, nor $x_1^5+x_2^{20}$. 
%So $(\delta, \{w_1, w_2\})\neq (3, \{1,5\})$ or $(5, \{1, 4\})$.
%Thus by Lemma \ref{delta-mu}, when $t\neq0$, we have %\textcolor{red}{(discuss carefully)}
%\begin{equation}\label{nonvanishing-euler}
Thus we have $\det {\bf E}(t\alpha_2)\star_{t\alpha_2}\neq0$ and
%\end{equation}
 ${\bf E}(t\alpha_2)\star_{t\alpha_2}$ is a unit of the Frobenius algebra $(\cH_{W, \<J\>}, \star_{t\alpha_2}, \<\cdot,\cdot\>)$. % in a small neighborhood of ${\bf t}=(0, t, 0, \cdots, 0)$. 
According to \cite[Theorem 3.4]{Abr}, $(\cH_{W, \<J\>}, \star_{t\alpha_2})$ is semisimple. Thus we complete a proof of Theorem \ref{main}.
\qed
%\newpage
\section{Genus-0 5-point FJRW correlators}
\label{5-point-appendix}

%We give a formula for computing concave genus-0 5-point correlators in FJRW theory. % for the case that the correlator is {\it concave}.  As an application we prove Proposition \ref{nonvanishing-appendix}.

%  As an application we calculate some specific correlators with potential $W = x^d + y^d$ using the symmetry group generated by the differential grading operator $J$. These correlators are necessary to verify that the Virasoro, Polischuk-Vaintrob, and FJRW structures are equivalent in this case. 

\subsection{Concave genus-0 5-point correlators}
In this section, We follow the ideas in {\cite{Francis}}, to give a method for computing concave genus-$0$ $5$-point correlators for any admissible LG pairs $(W, G)$.  
$$D = \sum_i {\rm rank}\, \pi_*\mathcal{L}_i=2;$$ 
thus for possibly non-vanishing correlators, we must have one of the following two cases: 
\begin{enumerate}
    \item[(A)] ${\rm rank}\, \pi_*\mathcal{L}_{i_1} = {\rm rank}\, \pi_*\mathcal{L}_{i_2}=1$, and the ranks of all other pushforward bundles are~0,
    \item[(B)] ${\rm rank}\, \pi_*\mathcal{L}_{i_0} =2$, and the ranks of all other pushforward bundles are 0.
\end{enumerate}

Next, we follow the method outlined in Section 3.1 of \cite{Francis} for giving $c_D(R^1\pi_\ast( \bigoplus_j \mathcal{L}_j) )$ in terms of small degree chern characters. {\it Note:} There is a typo in that paper; on page 1349, Equation 3 should be 
\[
c_t(\mathscr{E}) = \frac{1}{c_t(-\mathscr{E})}=\frac{1}{1-(1-c_t({-\mathscr{E}))}}=\sum_{i=0}^\infty(1-c_t({-\mathscr{E}}))^i,
\]
 yielding
 \[
 c_t(R^1\pi_\ast( \bigoplus_j \mathcal{L}_j) )=\sum_{i=0}^\infty(1-c_t(R^{\bullet}\pi_\ast( \bigoplus_j \mathcal{L}_j) )^i.
 \]
 With this error corrected, it is straightforward to obtain: 
\begin{proposition}
Under the hypotheses of Case A (above):
\begin{equation}\label{eq:cD}
 c_2(R^1\pi_\ast( \bigoplus_j \mathcal{L}_j) )= 
   ch_1(R^\bullet \pi_\ast(\mathcal{L}_{i_1} )) ch_1(R^\bullet \pi_\ast(\mathcal{L}_{i_2} )).
\end{equation}
Under the hypotheses of Case B (above):
\begin{equation}\label{eq:cD}
 c_2(R^1\pi_\ast( \bigoplus_j \mathcal{L}_j) = 
 \frac12  ch_1^2(R^\bullet \pi_\ast(\mathcal{L}_{i_0} ))+ch_2(R^\bullet \pi_\ast(\mathcal{L}_{i_0} )).
\end{equation}
\end{proposition}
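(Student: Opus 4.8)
The plan is to route the computation through the Chern character, which is additive on the K-theory class $R^\bullet\pi_\ast(\bigoplus_j \mathcal{L}_j) = \sum_j R^\bullet\pi_\ast \mathcal{L}_j$, and then invert to recover $c_2$ of the obstruction bundle $\mathscr{E} := R^1\pi_\ast(\bigoplus_j \mathcal{L}_j)$. The key preliminary observation is that concavity forces $\pi_\ast(\bigoplus_j\mathcal{L}_j)=0$, so in K-theory $R^\bullet\pi_\ast(\bigoplus_j\mathcal{L}_j) = -\mathscr{E}$, whence $ch_p(R^\bullet\pi_\ast(\bigoplus_j\mathcal{L}_j)) = -ch_p(\mathscr{E})$ in every degree $p$. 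Since $\overline{\mathcal{M}}_{0,5}$ has complex dimension $2$, all terms of degree $\geq 3$ vanish, $\mathscr{E}$ has rank $2$, and $c_2 = c_{\rm top}(\mathscr{E})$ is exactly the class we need.

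First I would record the two universal surface identities relating the total Chern class to the Chern character: for a line bundle $c_1 = ch_1$ with all higher classes zero, and for a rank-$2$ bundle $c_1 = ch_1$ together with $c_2 = \tfrac12 c_1^2 - ch_2 = \tfrac12 ch_1^2 - ch_2$ (equivalently $ch_2 = \tfrac12(c_1^2 - 2c_2)$). These are precisely what the corrected geometric-series expansion $c_t(\mathscr{E}) = \sum_{i\geq 0}\bigl(1 - c_t(R^\bullet\pi_\ast(\bigoplus_j\mathcal{L}_j))\bigr)^i$ produces after truncating past degree $2$, so either the expansion or the Newton identity yields the same coefficients.

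Next I would split into the two cases. In Case A the bundle decomposes as $\mathscr{E} = R^1\pi_\ast\mathcal{L}_{i_1}\oplus R^1\pi_\ast\mathcal{L}_{i_2}$, a direct sum of two line bundles, so Whitney's formula gives $c_2(\mathscr{E}) = c_1(R^1\pi_\ast\mathcal{L}_{i_1})\,c_1(R^1\pi_\ast\mathcal{L}_{i_2})$; replacing each factor by $c_1(R^1\pi_\ast\mathcal{L}_{i_k}) = ch_1(R^1\pi_\ast\mathcal{L}_{i_k}) = -ch_1(R^\bullet\pi_\ast\mathcal{L}_{i_k})$, the two signs cancel and we recover the stated product. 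In Case B the bundle $\mathscr{E} = R^1\pi_\ast\mathcal{L}_{i_0}$ is rank $2$, so $c_2(\mathscr{E}) = \tfrac12 ch_1(\mathscr{E})^2 - ch_2(\mathscr{E})$; substituting $ch_1(\mathscr{E}) = -ch_1(R^\bullet\pi_\ast\mathcal{L}_{i_0})$ (which squares away the sign) and $ch_2(\mathscr{E}) = -ch_2(R^\bullet\pi_\ast\mathcal{L}_{i_0})$ turns the subtraction into an addition and gives the claimed $\tfrac12 ch_1^2 + ch_2$.

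The only genuine hazard, and the step I would check most carefully, is the consistent tracking of the sign flip from $\mathscr{E}$ to $-\mathscr{E} = R^\bullet\pi_\ast(\bigoplus_j\mathcal{L}_j)$: it enters quadratically in the $ch_1$ terms (and cancels) but linearly in the $ch_2$ term (and does not), which is exactly why Case B acquires a $+ch_2$ rather than $-ch_2$. Beyond this and the additivity of $ch$ over the direct-sum decomposition, the argument uses no further geometry — only that the base is a surface, so degrees $\geq 3$ drop out.
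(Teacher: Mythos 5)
Your proof is correct and is essentially the computation the paper has in mind: the paper merely cites the (corrected) expansion $c_t(\mathscr{E})=\sum_{i\ge 0}\bigl(1-c_t(R^{\bullet}\pi_\ast(\bigoplus_j\mathcal{L}_j))\bigr)^i$ from \cite{Francis} and declares the rest straightforward, and your Whitney-formula/Newton-identity derivation with the explicit sign bookkeeping coming from $R^{\bullet}\pi_\ast=-\mathscr{E}$ (valid because concavity kills $\pi_\ast$) is exactly that omitted straightforward step. In particular you correctly isolate the one delicate point, namely that the sign cancels in the quadratic $ch_1$ terms but survives in the linear $ch_2$ term, which is what produces the $+\,ch_2$ in Case B.
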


Next, we want to describe these low degree chern characters in terms of certain well-understood cohomology classes in $\mathcal{M}_{0,k}$, briefly defined here. \\

\noindent {\it $\psi$-classes}\\
The psi class $\psi_i \in H^*(\overline{\mathcal{M}}_{0,k})$ is the first Chern class of the line bundle whose fiber at $(C,p_1, \ldots, p_k)$ is the cotangent space to $C$ at $p_i$. \\

\noindent {\it $\kappa$-classes}\\
Let $\pi:\overline{\mathcal{M}}_{0,k+1} \to  \overline{\mathcal{M}}_{0,k}$ be the map obtained by forgetting the $k+1$st point on the curve (note that this map also gives the universal curve over $\overline{\mathcal{M}}_{0,k}$). Further, let $\sigma_i:\overline{\mathcal{M}}_{0,k} \to \overline{\mathcal{M}}_{0,k+1} $ be the map sending $(C,p_1, \ldots, p_k)$ to a the nodal curve created by attaching a genus-0, 3-point curve to  $C$ at the point $p_i$ with the (non-nodal) marked points labeled $p_i$ and $p_{k+1}$. Let $D_i\subset \overline{\mathcal{M}}_{0,k+1} $ be the image of $\sigma_i$, and let $K $ be the first chern class of $\omega(\sum_i D_i)$. Then for $a = 1,\ldots, k-3$, we set $\kappa_a = \pi_*(K^{a+1})$.  \\

\noindent {\it Boundary classes}\\
For any partition $K \sqcup K' = [k]:=\{1,\ldots, k\}$ where $2<|K| <k-2$, we have a boundary divisor $\Delta_K$. For simplicity of notation, we will always assume that $1 \in K$. 
Let $\mathcal{C}$ be a genus-0 $k$-point orbicurve with marked points $p_1, p_2, \ldots, p_k$ corresponding to the FJRW elements $\phi_1, \ldots, \phi_k$ ($\phi_i = \alpha_i | \gamma_i\rangle$) Then, we can construct orbicurves $\mathcal{C}_{0,|K|+1} \in \mathcal{M}_{0,|K|+1}$ and $\mathcal{C}_{0,|K|+1} \in \mathcal{M}_{0,|K'|+1}$ such that the marked points of $\mathcal{C}_{0,|K|+1} $ are $p_+ \cup \{p_i\}_{i \in K}$, the marked points of $\mathcal{C}_{0,|K'|+1}$ are $p_- \cup \{p_i\}_{i \in K'}$. We associate the FJRW elements  $\phi_+ = 1|\gamma_+\rangle$, and $\phi_- = 1|\gamma_-\rangle$ with $p_+$ and $p_-$, respectively (concavity implies that $\gamma_+$ and $\gamma_-$ must also be narrow). 
 We can visualize this construction by``cutting" the decorated dual graph of the curve $\mathcal{C}$, in such a way that we get a ``cut'' graph of the form, as seen in 
Figure~\ref{fig:cutgraphs} for the case $k=5$. In that figure, $K$ is either $\{a,b\}$ or $\{c,d,e\}$, whichever contains the element $p_1$. Then let   $\rho_K$ be the canonical glueing map  $\rho_K: \mathcal{M}_{0,|K|+1} \times \mathcal{M}_{0,|K'|+1} \to \mathcal{M}_{0,k}$, obtained by identifying $p_+$ and $p_-$. We can think of this as glueing the dual graph pieces back together. We will denote by  $\psi_+$  the $\psi$ class in $H^\ast (\mathcal{M}_{0,|K|+1} )$ associated to the marked point $p_+$  ($\psi_-$ is defined analogously). 

\begin{figure}[h]
 \begin{center}
    \begin{tikzpicture}
    \node at (-2.2,0) (1) {$c$};
    \node at (-2,1) (2) {$b$};
    \node at (-2,-1) (3) {$d$};
    \node at (2,1) (4) {$1$};
    \node at (2,-1) (5) {$a$};
    \node at (-1,0) (6) {};
    \node at (-.05,0) (7) {};
    \draw (1) -- (-.75,0) -- (-.05,0);
    \draw (2) -- (-.75,0);
    \draw (3) -- (-.75,0);
   \draw (4) -- (.75,0) -- (.05,0);
    \draw (5) -- (.75,0);
    \draw (-.45,.2) node {$+$};
    \draw (.45,.2) node {$-$};
    \end{tikzpicture}
    \qquad \qquad \qquad 
    \begin{tikzpicture}
    \node at (-2.2,0) (1) {$a$};
    \node at (-2,1) (2) {$1$};
    \node at (-2,-1) (3) {$b$};
    \node at (2,1) (4) {$c$};
    \node at (2,-1) (5) {$d$};
    \node at (-1,0) (6) {};
    \node at (-.05,0) (7) {};
    \draw (1) -- (-.75,0) -- (-.05,0);
    \draw (2) -- (-.75,0);
    \draw (3) -- (-.75,0);
   \draw (4) -- (.75,0) -- (.05,0);
    \draw (5) -- (.75,0);
    \draw (-.45,.2) node {$+$};
    \draw (.45,.2) node {$-$};
    \end{tikzpicture}
\end{center}
\caption{The two possible cases for the ``cut'' graph $\Gamma$ given by $K$}\label{fig:cutgraphs}
\end{figure}
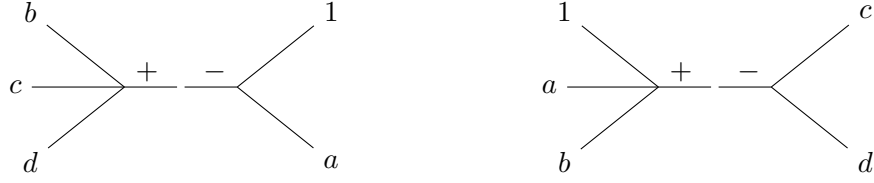

We note here that the stack $\overline{\mathcal{W}}_{0,k}$ has cohomology classes $\tilde \psi_i$, $\tilde \kappa_a$ and $\tilde \Delta_\Gamma$ defined in the same manner, which also satisfy 
\begin{equation}\label{pullbackclasses}
\tilde \psi_i = st^* \psi_i, \quad 
\tilde \kappa_a = st^* \kappa_a, \quad 
r \tilde \Delta_\Gamma = st^* \Delta_\Gamma.
\end{equation}

Recall that the Bernoulli polynomials are a family of polynomials with generating function 
\[
\frac{te^{xt}}{e^t-1} = \sum_{n=0}^\infty B_n(x) \frac{t^n}{n!}.
\]

Hereafter, we use the notational convention that for 
$$\gamma = (\exp{2\pi i \gamma_1},\ldots, \exp{2\pi i \gamma_n}), \quad \theta_i(\gamma) = \gamma_i.$$

In \cite{Chiodo}, Chiodo presented the following formula for the total chern character of the direct image of $R^\bullet \pi_\ast \mathcal{L}$ via the universal curve $\mathcal{C} \overset \pi \to  \mathscr{W}_{g,n}$.  
\begin{align*}
ch_t(R^\bullet \pi_\ast(\mathcal{L}_j ))
= \sum_{k\ge 0} \frac{t^k}{(k+1)!} \Bigg(&B_{k+1}(q_j)\tilde \kappa_k-
\sum_{i=1}^5 B_{k+1}(\gamma^i_j)\tilde \psi_i^k\\
&+\sum_K rB_{k+1}((\gamma_{+}^K)_j) (\rho_K)_\ast\Big(\sum_{i=0}^{k-1}(-\tilde \psi_+)^i(\tilde\psi_-)^{k-i-1}\Big)\Bigg),
\end{align*}
where $B_i$ is the $i$th Bernoulli polynomial,  $q_j = \theta_j(J)$, $\gamma^i_j = \theta_j(\gamma_i)$, and $(\gamma_+^K)_j = \theta_j(\gamma_+^K)$.

Using Equation~\ref{pullbackclasses} we obtain the following version of Chiodo's formula. 
\begin{align}\label{eq:Chiodo}
ch_t(R^\bullet \pi_\ast(\mathcal{L}_j ))
=\sum_{k\ge 0} \frac{t^k}{(k+1)!} st^\ast \Bigg(&B_{k+1}(q_j) \kappa_k-
\sum_{i=1}^5 B_{k+1}(\gamma^i_j)\psi_i^k\nonumber\\
&+\sum_K B_{k+1}((\gamma_{+}^K)_j) (\rho_K)_\ast\Big(\sum_{i=0}^{k-1}(- \psi_+)^i(\psi_-)^{k-i-1}\Big)\Bigg).
\end{align}

Then, 
\begin{align*}
&ch_1(R^\bullet \pi_\ast(\mathcal{L}_j ))=  \frac{1}{2} \Big(B_{2}(q_j)\kappa_1-
\sum_{i=1}^5 B_{2}(\gamma^i_j)\psi_i+\sum_K B_{2}((\gamma_{+}^K)_j) (\rho_K)_\ast(1)\Big),\\
&ch_2(R^\bullet \pi_\ast(\mathcal{L}_j ))=  \frac{1}{6} \Big(B_{3}(q_j)\kappa_2-
\sum_{i=1}^5 B_{3}(\gamma^i_j)\psi_i^2+\sum_K B_{3}((\gamma_{+}^K)_j) (\rho_K)_\ast(\psi_- - \psi_+)\Big).
\end{align*}

%Note that $ch_1^y$ and $ch_2^y$ can be found analogously. 
 The following Lemma can be found in \cite{FFJMR}. 
 \begin{lemma} \label{lem:boundary} Suppose that $|K|>2$, and fix $r,s \in K \setminus \{1\}$. Then
\[
{\rho_{K}}_\ast (\psi_+) = 
\displaystyle\sum_{1,r,s \in I \subset K} \hspace*{-.25cm} \Delta_K \Delta_I
+\hspace*{-.35cm}\displaystyle \sum_{1 \in I \subset K \setminus \{r,s\}} \hspace*{-.25cm}\Delta_K \Delta_{I \cup K^c}.
\] If $|K|\leq 2$, then ${\rho_{K}}_\ast (\psi_+)=0$.

Suppose that $|K|< k-2$, and fix $t,u \in \{1, \ldots, k\}\setminus K$. Then, 
\[
{\rho_{K}}_\ast (\psi_-) =
\displaystyle\sum_{\substack{\emptyset \ne I \subset K^c \\ t,u \notin I }} \Delta_K \Delta_{I\cup K}.
\]
If $|K|\geq k-2$, then ${\rho_{K}}_\ast (\psi_-)=0$.
\end{lemma}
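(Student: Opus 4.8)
The plan is to reduce the statement to the classical presentation of a $\psi$-class on $\overline{\mathcal{M}}_{0,n}$ as a sum of boundary divisors, and then to push that presentation forward through $\rho_K$, using that in genus zero the gluing map is a closed embedding with image the reduced divisor $\Delta_K$. The starting identity is the standard relation: on $\overline{\mathcal{M}}_{0,L}$ with marked points indexed by a finite set $L$, for any three distinct labels $a,b,c\in L$,
\[
\psi_a = \sum_{\substack{a\in S\subseteq L \\ b,c\notin S}} \delta_S,
\]
where $\delta_S$ is the boundary divisor whose generic point is a two-component curve carrying $S$ on one side and $S^c$ on the other (so $2\le|S|\le|L|-2$). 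I would apply this identity on the relevant factor of the domain $\overline{\mathcal{M}}_{0,|K|+1}\times\overline{\mathcal{M}}_{0,|K'|+1}$ of $\rho_K$ and push forward term by term.

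For $(\rho_K)_\ast(\psi_+)$, I apply the relation on the first factor $\overline{\mathcal{M}}_{0,|K|+1}$ (marked by $K\cup\{+\}$) with $a=+$ and auxiliary points $b=r$, $c=s$, where $r,s\in K\setminus\{1\}$; this gives $\psi_+=\sum_{\emptyset\ne I_0\subseteq K\setminus\{r,s\}}\delta_{I_0\cup\{+\}}$, the constraint $|S|\le|L|-2$ being automatic. The key geometric step is to identify the image of each such divisor: gluing $p_+$ to $p_-$ turns a curve in $\delta_{I_0\cup\{+\}}$ into a three-component chain whose two nodes cut $[k]$ into the partitions $\{K,K^c\}$ and $\{K\setminus I_0,\,I_0\cup K^c\}$. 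Hence the image is precisely the codimension-two boundary stratum $\Delta_K\cap\Delta_{I_0\cup K^c}$; since $\rho_K$ is a closed embedding and distinct boundary divisors of $\overline{\mathcal{M}}_{0,k}$ meet transversally, pushing forward the fundamental class gives $\Delta_K\,\Delta_{I_0\cup K^c}$. Summing yields $(\rho_K)_\ast\psi_+=\sum_{\emptyset\ne I_0\subseteq K\setminus\{r,s\}}\Delta_K\,\Delta_{I_0\cup K^c}$, and when $|K|=2$ the first factor is $\overline{\mathcal{M}}_{0,3}$, a point, so $\psi_+=0$ and the pushforward vanishes.

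It then remains to match this single sum to the two sums in the statement via the normalization $1\in K$ together with $\Delta_T=\Delta_{T^c}$. I would split according to whether $1\in I_0$: if $1\in I_0$ the label $I_0\cup K^c$ already contains $1$, giving the second sum verbatim; if $1\notin I_0$ then $1\in K\setminus I_0$, and setting $I:=K\setminus I_0=(I_0\cup K^c)^c$ rewrites $\Delta_{I_0\cup K^c}=\Delta_I$ with $\{1,r,s\}\subseteq I\subsetneq K$, giving the first sum (the degenerate value $I=K$ never occurs, since $I_0\ne\emptyset$). The computation of $(\rho_K)_\ast(\psi_-)$ is entirely analogous, applying the boundary relation on the second factor $\overline{\mathcal{M}}_{0,|K'|+1}$ with $a=-$ and auxiliary points $t,u\in K^c$; here each image node cuts off a set of the form $I\cup K$, which automatically contains $1$, so no complementation is needed and one obtains the single sum $\sum_{\emptyset\ne I\subseteq K^c,\ t,u\notin I}\Delta_K\,\Delta_{I\cup K}$, vanishing when $|K'|=2$, i.e. $|K|\ge k-2$. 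The only genuinely delicate point is the bookkeeping: correctly tracking the $1\in K$ normalization under complementation — which is exactly what forces $(\rho_K)_\ast\psi_+$ into two sums while $(\rho_K)_\ast\psi_-$ remains a single sum — and justifying that the transverse intersection $\Delta_K\cap\Delta_{I_0\cup K^c}$ represents the pushforward of the fundamental class of $\delta_{I_0\cup\{+\}}$.
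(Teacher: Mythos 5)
The paper does not actually prove this lemma --- it is imported from \cite{FFJMR} --- so there is no internal proof to compare yours against. Your argument is the natural one and it is essentially correct: you expand $\psi_+$ on the factor $\overline{\mathcal{M}}_{0,K\cup\{+\}}$ via the standard comparison relation $\psi_+=\sum_{+\in S,\; r,s\notin S}\delta_S$, push each boundary divisor forward through $\rho_K$ (a closed embedding in genus zero), identify each image with the transverse intersection $\Delta_K\cap\Delta_{I_0\cup K^c}$ of two distinct, compatible boundary divisors, and then reindex using $\Delta_T=\Delta_{T^c}$ and the normalization $1\in K$. The vanishing statements and the $\psi_-$ case follow by the same reasoning, and your observation that only the $\psi_+$ case requires complementation (hence splits into two sums) is exactly the right bookkeeping.

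One point deserves emphasis. Your derivation yields the first sum only over \emph{proper} subsets $I=K\setminus I_0\subsetneq K$ (since $I_0\neq\emptyset$), so no $\Delta_K\Delta_K$ term ever appears. The corollary the paper draws from this lemma, however, reads $(\rho_{\{1,a,b\}})_*(\psi_--\psi_+)=-\Delta_{1,a,b}^2-\Delta_{1,a,b}\Delta_{1,c,d}$, i.e., it retains the $I=K$ term, so the authors evidently read ``$\subset$'' as ``$\subseteq$''. A degree check supports your version: for $k=5$ and $K=\{1,a,b\}$ the map $\rho_K$ embeds $\overline{\mathcal{M}}_{0,4}$, so $\int(\rho_K)_*\psi_+=\int_{\overline{\mathcal{M}}_{0,4}}\psi_+=1$, which agrees with $\int\Delta_{1,a,b}\Delta_{1,c,d}=1$ but not with $\int(\Delta_{1,a,b}^2+\Delta_{1,a,b}\Delta_{1,c,d})=-1+1=0$ (using the paper's own intersection table). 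So either the lemma's containment is strict and you have proved precisely the stated identity, or it is non-strict and your proof shows the extra $I=K$ term must be dropped. Either way your argument is sound; it would be worth adding one sentence making the exclusion of $I=K$ explicit and recording this degree check. (This discrepancy only enters through $\mathrm{ch}_2$, i.e., Case B of the five-point formula, and so does not affect the concave correlator computed in Proposition \ref{nonvanishing-appendix}, which uses only Case A.)
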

 The following corollary in our case is straightforward to verify. 

\begin{corollary}
We have
\begin{align*}
&(\rho_{\{1,a\}})_\ast(\psi_- - \psi_+) = \Delta_{1,a}\Delta_{1,a,d},
\\
&(\rho_{\{1,a,b\}})_\ast(\psi_- - \psi_+) =-
\Delta_{1,a,b}\Delta_{1,a,b}-\Delta_{1,a,b}\Delta_{1,c,d}.
\end{align*}
\end{corollary}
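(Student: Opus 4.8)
The plan is to apply Lemma~\ref{lem:boundary} directly to the two gluing maps appearing in the statement, specializing to $k=5$ with marked points $\{1,a,b,c,d\}$ as in Figure~\ref{fig:cutgraphs}. In each case one of the two terms $\psi_\pm$ is killed outright by a cardinality condition, and the surviving term is a single-summand specialization of the lemma, so the whole computation reduces to reading off which index sets are admissible.

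First I would treat $K=\{1,a\}$, for which $|K|=2$. Since $|K|\le 2$, the lemma gives $(\rho_{\{1,a\}})_\ast(\psi_+)=0$, so only the $\psi_-$ contribution survives. Because $|K|=2<k-2=3$, I apply the second formula of the lemma with the auxiliary pair $t=b$, $u=c$ drawn from $K^c=\{b,c,d\}$. The condition $\emptyset\ne I\subset K^c$ together with $t,u\notin I$ forces $I=\{d\}$, so $(\rho_{\{1,a\}})_\ast(\psi_-)=\Delta_{1,a}\Delta_{\{d\}\cup\{1,a\}}=\Delta_{1,a}\Delta_{1,a,d}$, which is the first identity.

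Next I would treat $K=\{1,a,b\}$, for which $|K|=3$. Here $|K|\ge k-2=3$, so $(\rho_{\{1,a,b\}})_\ast(\psi_-)=0$ and only the $\psi_+$ contribution survives. Since $|K|>2$, I apply the first formula of the lemma with $r=a$, $s=b$, the only two elements of $K\setminus\{1\}$. The first sum, over $I$ with $\{1,a,b\}\subset I\subset K$, collapses to $I=K$ and contributes $\Delta_{1,a,b}\Delta_{1,a,b}$; the second sum, over $1\in I\subset K\setminus\{a,b\}=\{1\}$, collapses to $I=\{1\}$ and, after adjoining $K^c=\{c,d\}$, contributes $\Delta_{1,a,b}\Delta_{1,c,d}$. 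The overall sign in $\psi_--\psi_+$ then produces the second identity.

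The computation is essentially bookkeeping, and the only point deserving a word of care is the apparent dependence of the lemma on the auxiliary choices ($t,u$ in the $\psi_-$ formula and $r,s$ in the $\psi_+$ formula). For the small cardinalities occurring here the relevant index sets each collapse to a single admissible element, so the output is manifestly independent of these choices and no genuine obstacle arises.
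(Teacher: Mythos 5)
Your verification is correct and is precisely the routine application of Lemma~\ref{lem:boundary} (with $k=5$, the cardinality conditions killing one of $\psi_\pm$ in each case, and the index sets collapsing to single terms) that the paper has in mind when it calls the corollary ``straightforward to verify.'' Nothing further is needed.
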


We will be pushing forward various combinations of these $\psi$, $\kappa$, and $\Delta$ classes. Here we summarize the relevant intersection numbers computed using \cite{Johnson} (See \cite{Faber}), which will allow us to compute our correlator:

\begin{lemma}\label{lem:intnums}
The intersection numbers for various degree 2 classes are given below %in the following table.  
\begin{center}
\textbf{\begin{tabular}{|r|l|}
\hline
class & intersection number \\ \hline \hline
$\kappa_2$&$1$ \\ \hline
$\kappa_1^2$&$5$ \\ \hline
$\kappa_1\psi_i $ &$3$\\ \hline
$\kappa_1\Delta_K$ &$1$ \\ \hline
$\psi_i\psi_j$ & $2-\delta_i^j$\\\hline
$\psi_i\Delta_K$ &$\delta_i^K$\\ \hline
$\Delta_{K}\Delta_{K'}$ &$F(K,K')$\\\hline
%$ \kappa_2$ & $1$\\\hline
%$\psi_i^2$ & $1$\\\hline
%$\Delta_{1,a}\Delta_{1,a,d}$& $1$\\\hline
% $\Delta_{1,c,d}^2 + \Delta_{1,c,d}\Delta_{1,a,b}$& $ -1 + 1$\\\hline
\end{tabular}}
\end{center}

Above, $\delta_i^K$ is 1 when $i$ is in the larger component of the cut graph defined by $ K$, and 0 otherwise;   $F(K, K')$ is given below: 
\[
F(\Gamma_K,\Gamma_I) = 
\left\{ 
\begin{array}{rl}
1 & \text{if } \{K,K'\} = \{\{1,a\},\{1,a,b\}\} \\ 
1 & \text{if } \{K,K'\} = \{\{1,a,b\},\{1,c,d\}\} \\ 
-1 & \text{if } K=K' \\ 
0 & o/w .\\ 
\end{array}
\right.
\]
 \end{lemma}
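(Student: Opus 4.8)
The plan is to compute everything directly on the smooth projective surface $\overline{\mathcal{M}}_{0,5}$, on which each class appearing in Lemma~\ref{lem:intnums} is a divisor (a degree-$2$ cohomology class), so that every entry of the table is the pairing of two divisors, obtained by integration. I would organize the verification into three families — the $\psi$-$\psi$ numbers, the numbers involving $\kappa_1,\kappa_2$, and the boundary numbers $F(K,K')$ — together with the mixed $\psi$-$\Delta$ and $\kappa_1$-$\Delta$ pairings. All of these reduce to the single basic input $\int_{\overline{\mathcal{M}}_{0,n}}\psi_1^{a_1}\cdots\psi_n^{a_n}=\binom{n-3}{a_1,\dots,a_n}$ valid when $\sum a_i=n-3$.

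The $\psi$-$\psi$ entries are immediate from this formula with $n=5$: $\int\psi_i^2=\binom{2}{2}=1$ and $\int\psi_i\psi_j=\binom{2}{1,1}=2$ for $i\ne j$, which is exactly $\psi_i\psi_j=2-\delta_i^j$. For the $\kappa$ entries I would use the Arbarello–Cornalba description $\kappa_a=\pi_\ast(\psi_{n+1}^{a+1})$ under the forgetful map $\pi:\overline{\mathcal{M}}_{0,n+1}\to\overline{\mathcal{M}}_{0,n}$, together with the comparison relations $\pi^\ast\psi_i=\psi_i-\Delta_{i,n+1}$ and $\pi^\ast\kappa_a=\kappa_a-\psi_{n+1}^a$. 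Then $\kappa_2=\pi_\ast\psi_6^3$ gives $\int_{\overline{\mathcal{M}}_{0,5}}\kappa_2=\int_{\overline{\mathcal{M}}_{0,6}}\psi_6^3=1$, while the projection formula turns $\kappa_1\psi_i$ and $\kappa_1^2$ into integrals of $\psi$-monomials on $\overline{\mathcal{M}}_{0,6}$ and $\overline{\mathcal{M}}_{0,7}$. The boundary corrections $\Delta_{i,n+1}$ all drop out, since $\psi_{n+1}$ restricts to zero on the three-pointed rational tail carrying the $(n+1)$st marking, and one reads off $\kappa_1\psi_i=3$ and $\kappa_1^2=5$.

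The boundary numbers I would read off the geometry of $\overline{\mathcal{M}}_{0,5}$ as the degree-$5$ del Pezzo surface whose ten boundary divisors are its $(-1)$-curves. Restricting to $\Delta_K\cong\overline{\mathcal{M}}_{0,|K|+1}\times\overline{\mathcal{M}}_{0,|K'|+1}$ (a product of a point with $\overline{\mathcal{M}}_{0,4}\cong\mathbb{P}^1$) computes $\psi_i\Delta_K$ and $\kappa_1\Delta_K$: the class restricts to the corresponding $\psi$ or $\kappa$ on each factor, which integrates to $1$ on the $\overline{\mathcal{M}}_{0,4}$ factor and to $0$ on the point, giving $\psi_i\Delta_K=\delta_i^K$ (nonzero exactly when the marking $i$ sits on the larger component) and $\kappa_1\Delta_K=1$. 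For the off-diagonal $F(K,K')$, two distinct boundary divisors meet transversally in a single point precisely when the two partitions are simultaneously realizable by a three-component chain, which for $\overline{\mathcal{M}}_{0,5}$ are exactly the nested pair $\{\{1,a\},\{1,a,b\}\}$ and the pair $\{\{1,a,b\},\{1,c,d\}\}$, giving $1$ in those cases and $0$ otherwise.

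The one genuinely non-formal step is the self-intersection $\Delta_K^2=-1$, which I would obtain from the excess-intersection formula $\Delta_K^2=\int_{\Delta_K}c_1(N_{\Delta_K})$ together with the identification of the normal bundle of a boundary divisor as the tensor product of the two cotangent lines at the node, so that $c_1(N_{\Delta_K})=-\psi_+-\psi_-$; restricting to the $\overline{\mathcal{M}}_{0,3}\times\overline{\mathcal{M}}_{0,4}$ product and integrating gives $-(0+1)=-1$. This is the step where the bookkeeping is most delicate, though the $(-1)$-curve picture makes the answer transparent. All entries agree with the output of the recursive algorithm of \cite{Johnson} (see \cite{Faber}), which I would cite for an independent numerical check.
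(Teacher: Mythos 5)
Your proof is correct, and it is worth noting that it is more self-contained than what the paper actually does: the paper states Lemma~\ref{lem:intnums} without proof, simply citing the computer algorithm of \cite{Johnson} (following \cite{Faber}) for the numerical values. Your derivation replaces that black box with standard facts. I checked the individual entries against your method and they all come out right: the multinomial formula gives $\psi_i\psi_j=2-\delta_i^j$ immediately; writing $\kappa_a=\pi_*(\psi_{n+1}^{a+1})$ (which agrees with the paper's definition via $K=c_1(\omega(\sum D_i))=\psi_{n+1}$ on the universal curve) and using $\pi^*\psi_i=\psi_i-\Delta_{i,n+1}$, $\pi^*\kappa_a=\kappa_a-\psi_{n+1}^a$ yields $\kappa_2=1$, $\kappa_1\psi_i=\binom{3}{2,1}=3$, and $\kappa_1^2=\binom{4}{2,2}-\binom{3}{3}=6-1=5$, the boundary corrections indeed dying because $\psi_{n+1}$ vanishes on the relevant rational tails; restriction to $\Delta_K\cong\overline{\mathcal{M}}_{0,3}\times\overline{\mathcal{M}}_{0,4}$ gives $\kappa_1\Delta_K=1$ and $\psi_i\Delta_K=\delta_i^K$ (with the paper's convention that the ``larger component'' is the $4$-pointed one); the compatibility analysis of partitions of $[5]$ gives exactly the two transverse-intersection cases in $F(K,K')$, and the normal-bundle computation $\int_{\Delta_K}(-\psi_+-\psi_-)=-(0+1)=-1$ gives the self-intersection, consistent with the $(-1)$-curve picture on the degree-$5$ del Pezzo surface. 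What your approach buys is a verifiable hand computation independent of software; what the paper's citation buys is brevity and an algorithm that scales to the many other intersection numbers implicitly used elsewhere. The one place to be slightly careful in a write-up is the sign/placement convention for $\psi_+$ versus $\psi_-$ relative to the two factors of $\Delta_K$, but since one factor is always $\overline{\mathcal{M}}_{0,3}$ the asymmetry is harmless here.
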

 
Now, we collect all these facts into the following useful proposition:
  
  \begin{proposition}\label{prop:05corr}
%  Assume that $W \in \Bbb C [x_1,\ldots, x_n]$, and $G$ is an admissible symmetry group for $W$, and that 
Let $\big\langle\,  \fj{1}{\gamma_1}, \fj{1}{\gamma_2}, \fj{1}{\gamma_3}, \fj{1}{\gamma_4}, \fj{1}{\gamma_5}\, \big\rangle_{0, 5}$ be a concave genus-$0$ $5$-point correlator of an admissible pair $(W, G)$ that satisfies \eqref{selection-rule} and \eqref{virtual-cycle-degree}. There are two cases:
%\textcolor{red}{which satisfies the non-vanishing criteria in Lemma 3.10}\footnote{Where do we need this condition?}.
\begin{enumerate}
    \item 
If ${\rm rank}\, \pi_*\mathcal{L}_{i_1} = {\rm rank}\, \pi_*\mathcal{L}_{i_2}=1$, while the ranks of all other pushforward bundles are equal to 0, then 
\begin{align*}
& \big\langle\,  \fj{1}{\gamma_1}, \fj{1}{\gamma_2}, \fj{1}{\gamma_3}, \fj{1}{\gamma_4}, \fj{1}{\gamma_5}\, \big\rangle_{0,5}\\
=&{1\over 4}\Big(5{B}(J,J) +\sum_{i=1}^5 ({B}(\gamma_i,\gamma_i)-3{\tilde B}(J,\gamma_i))
  +\sum_K ({\tilde B}(J, \gamma_{+}^K)-B(\gamma_+^K,\gamma_+^K))+2\!\!\!\sum_{1\leq i<j\leq5}\!\!\!\tilde B(\gamma_i,\gamma_j)\\
&  +\!\!\!\sum_{1< i< j\le 5}\!\! \big(
  -\tilde B(\gamma_1,\gamma_{+}^{1,i,j})
 - \tilde B(\gamma_i, \gamma_+^{1,i,j})-\tilde B(\gamma_i, \gamma_+^{\{1,j\}^c})+\tilde B(\gamma_+^{1,i},\gamma_+^{1,i,j}) + \tilde B(\gamma_+^{1,i,j},\gamma_+^{\{i,j\}^c})
 \big)\Big),
\end{align*}
where $B(\gamma_1,\gamma_2) = B_2(\theta_{i_1}(\gamma_1))B_2(\theta_{i_2}(\gamma_2))$ and $\tilde B(\gamma_1,\gamma_2) = B(\gamma_1,\gamma_2)+B(\gamma_2,\gamma_1)$.\\
   \item 
If ${\rm rank}\, \pi_*\mathcal{L}_{a} =2$, while the ranks of all other pushforward bundles are equal to 0, then
%Then the correlator $\big\langle\,  \fj{1}{\gamma_1}, \fj{1}{\gamma_2}, \fj{1}{\gamma_3}, \fj{1}{\gamma_4}, \fj{1}{\gamma_5}\, \big\rangle_{0,5}$ is equal to 
\begin{align*}
&\big\langle\,  \fj{1}{\gamma_1}, \fj{1}{\gamma_2}, \fj{1}{\gamma_3}, \fj{1}{\gamma_4}, \fj{1}{\gamma_5}\, \big\rangle_{0,5}\\
=&\frac{1}{8} \Big ( 5{B}'_2(J,J)
 +\sum_{i=1}^5 ({B}'_2(\gamma_i,\gamma_i)-6{ B}'_2(J,\gamma_i))
  +\sum_K ({2 B}'_2(J, \gamma_{+}^K)-B(\gamma_+^K,\gamma_+^K))+4\!\!\!\!\!\sum_{1\leq i<j\leq5}\!\!\!\! B'_2(\gamma_i,\gamma_j)\\
& +2\!\!\!\sum_{1< i< j\le 5}\!\! \big(
  - B'_2(\gamma_1,\gamma_{+}^{1,i,j})
 -  B'_2(\gamma_i, \gamma_+^{1,i,j})- B'_2(\gamma_i, \gamma_+^{\{1,j\}^c})+ B'_2(\gamma_+^{1,i},\gamma_+^{1,i,j}) +  B'_2(\gamma_+^{1,i,j},\gamma_+^{\{i,j\}^c})\big)
   \Big)\\
& +\frac{1}{6}  \Big({B}'_3(J)-
\sum_{i=1}^5 {B}'_3(\gamma_i)+
\sum_{i=1}^5 {B}'_3(\gamma_{+}^{1,i})\Big).
\end{align*}
\end{enumerate}
where $B'_3(\gamma) = B_3(\theta_{i_0}(\gamma))$, and $B'_2(\gamma_1,\gamma_2) =B_2(\theta_{i_0}(\gamma_1))B_2(\theta_{i_0}(\gamma_2)) $.
\end{proposition}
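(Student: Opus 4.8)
The plan is to assemble the correlator from the Chiodo formula and the intersection numbers, following the two-case split already set up in the paper. The starting point is Proposition on $c_2(R^1\pi_\ast(\bigoplus_j \mathcal{L}_j))$, which in Case A reads $ch_1(R^\bullet\pi_\ast \mathcal{L}_{i_1})\, ch_1(R^\bullet\pi_\ast \mathcal{L}_{i_2})$ and in Case B reads $\tfrac12 ch_1^2(R^\bullet\pi_\ast \mathcal{L}_{i_0})+ch_2(R^\bullet\pi_\ast \mathcal{L}_{i_0})$. By the concavity axiom \eqref{concavity-axiom}, the correlator equals $\tfrac{1}{\deg \mathrm{st}}$ times the integral of the appropriate class over $\overline{\mathcal M}_{0,5}$ (pulled back via $\mathrm{st}$), so the whole computation reduces to expanding these products of Chern characters in terms of the tautological classes $\psi_i$, $\kappa_a$, $\Delta_K$ and then integrating.

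First I would substitute the degree-one and degree-two truncations of Chiodo's formula \eqref{eq:Chiodo} — namely the explicit $ch_1$ and $ch_2$ expressions displayed just before Lemma \ref{lem:boundary} — into the Case A and Case B expressions for $c_2$. The Bernoulli evaluations $B_2(q_j)$, $B_2(\gamma^i_j)$, $B_2((\gamma_+^K)_j)$ (and the $B_3$ analogues in Case B) become the coefficients, which I package into the bilinear symbols $B(\gamma_1,\gamma_2)$, $\tilde B$, $B'_2$, $B'_3$ defined in the statement. This turns each Chern character into a formal $\Q$-linear combination of $\kappa_1$, $\psi_i$, and the boundary pushforwards $(\rho_K)_\ast(1)$ (degree one) together with $\kappa_2$, $\psi_i^2$, and $(\rho_K)_\ast(\psi_--\psi_+)$ (degree two). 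For the latter I invoke the Corollary expressing $(\rho_{\{1,a\}})_\ast(\psi_--\psi_+)$ and $(\rho_{\{1,a,b\}})_\ast(\psi_--\psi_+)$ as explicit products of boundary divisors.

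Next I would multiply out. In Case A the product $ch_1 \cdot ch_1$ is a product of two degree-one classes, so I expand it into sums of pairwise products $\kappa_1\psi_i$, $\kappa_1\Delta_K$, $\psi_i\psi_j$, $\psi_i\Delta_K$, $\Delta_K\Delta_{K'}$, and $\kappa_1^2$; in Case B I handle the $ch_1^2$ term the same way and separately integrate the degree-two class $ch_2$. Each resulting monomial is then evaluated against the single table of degree-two intersection numbers in Lemma \ref{lem:intnums} (with $\kappa_2=1$, $\kappa_1^2=5$, $\kappa_1\psi_i=3$, etc.), and the $\delta_i^K$ and $F(K,K')$ rules sort out which boundary terms survive. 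Collecting like terms — grouping the $B(J,J)$ coefficient, the diagonal $B(\gamma_i,\gamma_i)$ and mixed $\tilde B(J,\gamma_i)$ terms, the $\tilde B(\gamma_i,\gamma_j)$ terms, and the five families of boundary contributions indexed by $1<i<j\le 5$ — should reproduce exactly the two displayed formulas, including the overall $\tfrac14$ and $\tfrac18$ prefactors (the $\tfrac14 = \tfrac12\cdot\tfrac12$ coming from the two factors of $\tfrac12$ in $ch_1$, and in Case B the extra $\tfrac12$ from $\tfrac12 ch_1^2$ giving $\tfrac18$, plus the $\tfrac16$ from $ch_2$).

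The main obstacle will be the bookkeeping of the boundary terms: correctly enumerating which partitions $K$ contribute, tracking the convention $1\in K$, applying the $\delta_i^K$ ``larger component'' rule and the sign-sensitive $F(K,K')$ pairing to the products coming from the Corollary, and verifying that the surviving boundary intersections collapse to precisely the five-term sum $-\tilde B(\gamma_1,\gamma_+^{1,i,j})-\tilde B(\gamma_i,\gamma_+^{1,i,j})-\tilde B(\gamma_i,\gamma_+^{\{1,j\}^c})+\tilde B(\gamma_+^{1,i},\gamma_+^{1,i,j})+\tilde B(\gamma_+^{1,i,j},\gamma_+^{\{i,j\}^c})$ over $1<i<j\le 5$. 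Managing the degree cancellations so that the $\deg\mathrm{st}$ factor and the orbifold factor $r$ in $r\tilde\Delta_\Gamma = \mathrm{st}^\ast\Delta_\Gamma$ (from \eqref{pullbackclasses}) combine correctly with the $\tfrac{1}{\deg\mathrm{st}}$ in the concavity axiom is the delicate point; once the symmetrizations $\tilde B(\gamma_1,\gamma_2)=B(\gamma_1,\gamma_2)+B(\gamma_2,\gamma_1)$ are used to merge contributions coming from the two line bundles $\mathcal{L}_{i_1},\mathcal{L}_{i_2}$ in Case A, the remaining work is purely mechanical term-matching.
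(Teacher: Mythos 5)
Your proposal follows essentially the same route as the paper's proof: reduce via the concavity axiom to integrating $c_2\bigl(R^1\pi_\ast(\bigoplus_j\mathcal{L}_j)\bigr)$ over $\overline{\mathcal{M}}_{0,5}$, express that class through $ch_1$ and $ch_2$ using the preceding proposition, substitute the truncations of Chiodo's formula together with the corollary on $(\rho_K)_\ast(\psi_--\psi_+)$, and then expand and evaluate each monomial against the intersection-number table of Lemma \ref{lem:intnums}, with the prefactors $\tfrac14$, $\tfrac18$, $\tfrac16$ arising exactly as you describe. This matches the paper's argument, including its identification of the boundary bookkeeping as the only delicate step.
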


\subsection{A proof of Proposition \ref{nonvanishing-appendix}}
\iffalse
Let $W =x^a+y^b$, and $G=\langle J \rangle$.
In this section we will compute the correlator $\langle\,  \fj{1}{J^2}, \fj{1}{J^2}, \fj{1}{J^2}, \fj{1}{J^{d-2}}, \fj{1}{J^{d-1}}\, \rangle_{0,5}^{W,G}$. 
Set $d ={\rm lcm}(a,b) $, $a = d/w_1, b = d/w_2$.
\fi

Now we evaluate the correlator $\<\alpha_{d-1},  \alpha_{d-2},  \alpha_{2},  \alpha_{2},  \alpha_{2}\>_{0,5}$ in Proposition~\ref{nonvanishing-appendix}.
We use Proposition~\ref{prop:05corr}, as this correlator satisfies the necessary hypotheses.
We define
\begin{equation}
\label{bernoulli-formula}
\begin{dcases}
b_k^x: =B_2\left({k w_1\over d}\right)=\frac{d^2-6d k w_1+6k^2w_1^2}{6d^2}, \\
b_k^y: =B_2\left({k w_2\over d}\right)=\frac{d^2-6d k w_2+6k^2w_2^2}{6d^2}.
\end{dcases}
\end{equation}
We denote by $b_0:=b_0^x=b_0^y.$
%Recall that \begin{equation}\label{bernoulli-formula}
%\begin{dcases}
%B_2(x)  =B_2(1-x) = x^2-x+\frac 16.
%B_3(x) =-B_3(1-x) =  x^3-\frac 32 x^2 + \frac 12 x.
%\end{dcases}
%\end{equation}
The following lemma will help with our calculations. Here, for brevity, we set 
$$\gamma^i_x = \theta_x(\gamma_i), \quad q_x = \theta_x(J),\quad \text{and}\quad \gamma_{+,x}^{K} = \theta_x(\gamma_{+}^{K}).$$
\begin{lemma}\label{lem:bernoulli}
We have %\textcolor{red}{We have $q_i\neq {1\over 2}$, $\frac {2w_1}{d}<1$, what about $\frac {3w_1}{d}, \frac {4w_1}{d}?$}
\begin{align*}
   & {B}_2(\gamma_{+,x}^{\{1,5\}}) = {B}_2(\gamma_{+,x}^{\{1,2,4\}}) = {B}_2(\gamma_{+,x}^{\{1,3,4\}}) =b_0, \\ %=  \frac16
&   {B}_2(q_x) ={B}_2(\gamma_x^5)  = {B}_2(\gamma_{+,x}^{\{1,4\}}) = {B}_2(\gamma_{+,x}^{\{1,2,5\}})={B}_2(\gamma_{+,x}^{\{1,3,5\}})=b_1^x, %= \frac{d^2-6dw_1+6w_1^2}{6d^2}
\\
 &  {B}_2(\gamma_x^1) ={B}_2(\gamma_x^2) = {B}_2(\gamma_x^3)={B}_2(\gamma_x^4) =b_2^x, %  = \frac{d^2-12dw_1+24w_1^2}{6d^2}
\\
 &  {B}_2(\gamma_{+,x}^{\{1,4,5\}}) ={B}_2\Big(\gamma_{+,x}^{\{1,2\}}\Big) ={B}_2(\gamma_{+,x}^{\{1,3\}})=b_3^x, %= \frac{d^2-18dw_1+54w_1^2}{6d^2}
\\
  & {B}_2(\gamma_+^{\{1,2,3\}})=
  \begin{dcases}
  b_4^x, & \text{if} \quad 4 w_1\leq d; \\
   \widetilde{b}_4^x, &  \text{if} \quad 4 w_1>d,
  % = \frac{d^2-24dw_1+96w_1^2}{6d^2}
  \end{dcases}
   \end{align*}
where $ \widetilde{b}_4^x = B_2\left({k w_1\over d}-1\right)$.

Equivalent statements hold for $q_y$, $\gamma^i_y$, and  $\gamma^K_y$ with $w_2$ in place of $w_1$. 
\end{lemma}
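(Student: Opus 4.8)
The plan is to reduce every quantity in the statement to one of the explicit expressions in \eqref{bernoulli-formula}, using (a) the phases $\theta_x$ of the insertions and of the node states, and (b) the reflection symmetry $B_2(1-u)=B_2(u)$ together with the numerical constraints forced by $\widehat{c}_W\geq 1$. Throughout I write $\{x\}$ for the fractional part of $x\in\mathbb{R}$. First I would fix the labelling: since all five insertions are narrow the correlator is permutation invariant, so I relabel the markings as $\gamma_1=\gamma_2=\gamma_3=\alpha_2$, $\gamma_4=\alpha_{d-2}$, $\gamma_5=\alpha_{d-1}$, with every node subset $K$ containing the index $1$. From $J^m=\big(e^{2\pi\sqrt{-1}\,mw_1/d},e^{2\pi\sqrt{-1}\,mw_2/d}\big)$ one reads off $q_x=w_1/d$ together with $\theta_x(\alpha_2)=\{2w_1/d\}$, $\theta_x(\alpha_{d-2})=\{-2w_1/d\}$, and $\theta_x(\alpha_{d-1})=\{-w_1/d\}$. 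For a Fermat polynomial $w_1/d=1/a_1$, so the decisive numerical inputs are: $\widehat{c}_W\geq 1$ gives $2w_1+2w_2\leq d$ and hence $2w_1/d<1$; and the absence of a weight-$\tfrac12$ variable gives $a_1\geq 3$, whence $3w_1/d=3/a_1\leq 1$ with equality iff $a_1=3$, and $4w_1/d>1$ exactly when $a_1=3$. These inequalities pin down precisely which multiples $kw_1/d$ leave $[0,1)$, which is the only genuine subtlety.

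Next I would compute each node phase. Applying the selection rule \eqref{selection-rule} on the genus-$0$ component carrying $\{p_i\}_{i\in K}$ together with the node $p_+$ (a curve with $|K|+1$ special points) shows that $(|K|-1)\tfrac{w_1}{d}-\sum_{i\in K}\theta_x(\gamma_i)-\theta_x(\gamma_+^K)\in\mathbb{Z}$; since $\theta_x(\gamma_+^K)\in[0,1)$ this forces
$$\theta_x(\gamma_+^K)=\Big\{(|K|-1)\tfrac{w_1}{d}-\sum_{i\in K}\theta_x(\gamma_i)\Big\}.$$
Substituting the three insertion phases, each of the four pairs and six triples $K$ containing $1$ yields a fractional part equal to an explicit integer multiple of $w_1/d$ reduced into $[0,1)$; for example $K=\{1,5\}$ and $K=\{1,2,4\}$ give $\theta_x=0$, the subsets $K=\{1,4\}$ and $K=\{1,2,5\}$ give $w_1/d$, and $K=\{1,4,5\}$ gives $\{3w_1/d\}$.

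Finally I would feed each value through $B_2$. Using $B_2(1-u)=B_2(u)$ and the ranges above, every argument collapses onto $B_2(kw_1/d)=b_k^x$ with $k\in\{0,1,2,3,4\}$; the boundary case $3w_1/d=1$ (when $a_1=3$) is harmless because $B_2(1)=B_2(0)=b_0$. The only multiple that can exceed $1$ is carried by the triple $K=\{1,2,3\}$, whose phase is $\{-4w_1/d\}=1-\{4w_1/d\}$, and there the dichotomy $4w_1\leq d$ versus $4w_1>d$ produces $b_4^x$ or $\widetilde{b}_4^x$ respectively, matching the stated case distinction. The assertions for $q_y,\gamma^i_y,\gamma^K_y$ follow verbatim after exchanging $w_1\leftrightarrow w_2$ (equivalently $a_1\leftrightarrow a_2$). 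I expect the main obstacle to be purely organizational: verifying for all ten subsets that the reduced phase lands on the claimed $b_k^x$, while carefully tracking the two borderline phenomena, namely $3w_1/d$ reaching $1$ and $4w_1/d$ crossing $1$.
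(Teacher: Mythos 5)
Your proposal is correct and follows essentially the same route as the paper: compute each node decoration as the fractional part of an integer multiple of $w_1/d$ (the paper simply records $\gamma_+^{K}=(|K|-1)J-\sum_{i\in K}\gamma_i$ rather than deriving it from the selection rule), use $\widehat{c}_W\geq 1$ to conclude $kw_1/d\in[0,1]$ for $k\leq 3$ with only $4w_1/d$ possibly exceeding $1$, and finish with $B_2(u)=B_2(1-u)$. Two harmless imprecisions: the phase for $K=\{1,2,5\}$ is $1-w_1/d$ rather than $w_1/d$ (irrelevant after the reflection identity), and your justification of the key inequalities via $w_1/d=1/a_1$ is specific to the Fermat (and chain) cases, whereas the lemma is invoked for all invertible two-variable $W$ with $\widehat{c}_W\geq 1$.
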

\begin{proof} 
Let $\{r\}$ be the fractional part of $r\in \mathbb{R}$.
It is straightforward to verify that 
\begin{align*}
&\gamma_+^{\{1,i\}} = J - \gamma_1 - \gamma_i
= \left\{ 
\begin{array}{rl}
\{1-\frac{3w_1}{d} \} & \text{if } i \in \{2,3\},\\[1mm]
\frac{w_1}{d}  & \text{if } i =4,\\[1mm]
0  & \text{if } i =5,\\
\end{array}
\right. 
\end{align*}
and 
\begin{align*}
&\gamma_+^{\{1,i,j\}} = 2J - \gamma_1 - \gamma_i - \gamma_j
= \left\{ 
\begin{array}{rl}
\{1-\frac{4 w_1}{d}\} & \text{if } \{i,j\} = \{2,3\},\\[1mm]
0  & \text{if } \{i,j\} = \{2,4\} \text{ or } \{3,4\},\\[1mm]
1-\frac{w_1}{d}  & \text{if } \{i,j\} = \{2,5\} \text{ or } \{3,5\},\\[1mm]
\{\frac{3w_1}{d}\}  & \text{if } \{i,j\} = \{4,5\}.\\
\end{array}
\right.  
\end{align*}
In Proposition~\ref{nonvanishing-appendix}, $W:\C^2\to \C$ is assumed to be a degree $d$ invertible polynomial with $\widehat{c}_W\geq 1$.
So we have $${kw_1\over d}, {kw_2\over d}\in [0, 1], \quad \forall k=0, 1, 2, 3,$$ 
and at most one of ${4w_1\over d}, {4w_2\over d}$ belongs to the interval $(1, 2)$.
Now the result follows from~\eqref{bernoulli-formula} and the equality $B_2(x)  =B_2(1-x).$ %= x^2-x+\frac 16$.
\end{proof}

\iffalse
\begin{proposition}
 Assume that $W =x^d+y^d$, and $G=\langle J \rangle$.
Then
\[
\big\langle\,  \fj{1}{J^2}, \fj{1}{J^2}, \fj{1}{J^2}, \fj{1}{J^{d-2}}, \fj{1}{J^{d-1}}\, \big\rangle_{0,5} = \frac {w_1w_2} {d^2}
\]
\end{proposition}
\fi

%\newpage
\noindent{\em A proof of Proposition~\ref{nonvanishing-appendix}.}
Note that in this case the  pushforward bundles $\pi_*( \mathcal{L}_x)$ and $\pi_*( \mathcal{L}_y)$ are both of rank one. 
We discuss in two cases:
\begin{enumerate}
\item ${4w_1\over d}\leq 1$ and ${4w_2\over d}\leq 1$;
\item one of ${4w_1\over d}, {4w_2\over d}$ belongs to $(1, 2)$.
\end{enumerate}
For the first case, we have
\begin{align*}
&c_2(R^1\pi_\ast( \bigoplus_j \mathcal{L}_j) \\
=\frac{1}{4} & \Big( b_1^x(\kappa_1-\psi_5+\Delta_{1,4}+\Delta_{1,2,5}+\Delta_{1,3,5})
 +b_2^x(-\psi_1-\psi_2-\psi_3-\psi_4)\\
&+b_3^x(\Delta_{1,2}+\Delta_{1,3}+\Delta_{1,4,5})+b_4^x\Delta_{1,2,3}+b_0(\Delta_{1,5}+\Delta_{1,2,4}+\Delta_{1,3,4})\Big)\times\\
&\Big( b_1^y(\kappa_1-\psi_5+\Delta_{1,4}+\Delta_{1,2,5}+\Delta_{1,3,5})
 +b_2^y(-\psi_1-\psi_2-\psi_3-\psi_4)\\
& +b_3^y(\Delta_{1,2}+\Delta_{1,3}+\Delta_{1,4,5})+b_4^y\Delta_{1,2,3}+b_0(\Delta_{1,5}+\Delta_{1,2,4}+\Delta_{1,3,4})\Big).
\end{align*}

Consider the expression $4c_2(R^1\pi_\ast( \bigoplus_j \mathcal{L}_j)$ as a polynomial in the variables $b_i^j$ with coefficients which are expressions involving our special cohomology classes. Each coefficient can then be pushed forward to obtain an intersection number (see Lemma \ref{lem:intnums}). The intersection number coefficients are summarized below. 

%\newpage

\begin{table}[H]
\begin{center}
    \begin{tabular}{|c|r|}
    \hline
    monomial $m$ & intersection number $n$\\ \hline \hline
    $b_0^2$ & $-1-1-1=-3$\\\hline
    $b_1^xb_1^y$ & $5-5-1-1-1=-3$\\\hline
    $b_2^xb_2^y$ & $4(7)=28$\\\hline
    $b_3^x b_3^y$ & $-1-1-1=-3$\\\hline
    $b_4^xb_4^y$ & $-1$\\\hline
    $b_0b_1^x$, $b_0b_1^y$ & $3+0+2+2+2=9$\\\hline
    $b_0b_2^x$, $b_0b_2^y$ & $-2-2-2-3=-9$\\\hline
    $b_0b_3^x$, $b_0b_3^y$ & $1+1+1=3$\\\hline
    $b_0b_4^x$, $b_0b_4^y$ & $0$\\\hline
    $b_1^xb_2^y$, $b_1^yb_2^x$ & $-12+8-2-2-2=-10$\\\hline
    $b_1^xb_3^y$, $b_1^yb_3^x$ & $3-3+1+1+1=3$\\\hline
    $b_1^xb_4^y$, $b_1^yb_4^x$ & $1$\\\hline
    $b_2^xb_3^y$, $b_2^yb_3^x$ & $-3-1-1-1=-6$\\\hline
    $b_2^xb_4^y$, $b_2^yb_4^x$ & $-3$\\\hline
    $b_3^xb_4^y$, $b_3^yb_4^x$ & $3$\\\hline
    \end{tabular}
\end{center}
\end{table}

%\newpage
Next, we use Lemma \ref{lem:bernoulli} to express the terms $b_ib_j$ as functions of $d$, which can then be multiplied by the intersection number coefficients found in the table here.

\begin{table}[H]
 \resizebox{\textwidth}{!}{%
%\begin{center}
%\begin{adjustbox}{max width=\linewidth,center}
    \begin{tabular}{|c|l|l|}
    \hline
     $m$  &  $36d^4\cdot m\cdot n$ \\\hline \hline
    $b_0^2$ & $-3d^4$ %& $-3d^4$
    \\\hline
    $b_1^xb_1^y$ & $-3d^4 +18d^3(w_1+w_2)-18 d^2(w_1^2+w_2^2+6w_1w_2) \  +108d(w_1^2w_2 + w_1w_2^2) \ -108w_1^2w_2^2$
    \\\hline
    $b_2^xb_2^y$ & $28d^4 - 336 d^3(w_1+w_2) + 672 d^2(w_1^2+w_2^2+6w_1w_2) - 8064 d(w_1^2w_2 + w_1w_2^2) + 16128w_1^2w_2^2$
    \\\hline
    $b_3^xb_3^y$ & $-3d^4 +54 d^3(w_1+w_2)  -162 d^2(w_1^2+w_2^2+6w_1w_2) +2916 d(w_1^2w_2 + w_1w_2^2) -8748 w_1^2w_2^2$
    \\\hline
    $b_4^xb_4^y$ & $-d^4 \ + 24d^3(w_1+w_2) - 96d^2(w_1^2+w_2^2+6w_1w_2) + 2304d(w_1^2w_2 +w_1w_2^2) - 9216w_1^2w_2^2$
    \\\hline
    $b_0b_1^x$ & $\ \ 9d^4 - 54 d^3w_1 \qquad \qquad+ 54d^2w_1^2$
    \\\hline
    $b_0b_1^y$ & $\ \ 9d^4\qquad\qquad - 54 d^3w_2 \qquad \qquad+ 54d^2w_2^2$
    \\\hline
    $b_0b_2^x$ & $-9d^4 +108 d^3w_1 \qquad  -216 d^2w_1^2$
    \\\hline
    $b_0b_2^y$ & $-9 d^4 \qquad \qquad +108 d^3w_2 \qquad  -216 d^2w_2^2$
    \\\hline
    $b_0b_3^x$ & $\ \ 3 d^4 - 54 d^3w_1 \qquad \qquad+ 162 d^2w_1^2$
    \\\hline
    $b_0b_3^y$ & $\ \ 3 d^4 \qquad \qquad- 54 d^3w_2 \qquad \qquad+162d^2w_2^2$
    \\\hline
    $b_1^xb_2^y$ & $-10d^4 + 60 d^3(w_1+2w_2)- 60d^2(w_1^2+4w_2 ^2+12w_1w_2) +720d(w_1^2w_d+2w_1w_2^2) - 1440w_1^2w_2^2$
    \\\hline
    $b_1^yb_2^x$ & $-10d^4 + 60d^3(2w_1+w_2)- 60d^2(4w_1^2+w_2 ^2+12w_1w_2) +720d(2w_1^2w_d+w_1w_2^2) - 1440w_1^2w_2^2$
    \\\hline
    $b_1^xb_3^y$ & $\ \ 3d^4 \ - 18d^3(w_1 + 3w_2) \ + 18 d^2(9w_1^2 + w_2^2 + 18w_1w_2) - 324 d(w_1^2w_2+3w_1w_2^2) + 972 w_1^2w_2^2$
    \\\hline
   $b_1^yb_3^x$ & $\ \ 3d^4 \ - 18 d^3(3w_1 + w_2) \ + 18 d^2(w_1^2 + 9w_2^2 + 18w_1w_2) - 324 d(3w_1^2w_2+w_1w_2^2) + 972 w_1^2w_2^2$
    \\\hline
    $b_1^xb_4^y$ & $\ \ \ d^4 \ \ - 6d^3(w_1+4w_2)\ \ + 6d^2(w_1^2+16w_2^2+24w_1w_2) - 144d(w_1^2w_2 + 4w_1w_2^2) + 576w_1^2w_2^2$
    \\\hline
    $b_1^yb_4^x$ & $\ \ \ d^4 \ \ - 6d^3(4w_1+w_2) \ \ + 6d^2(16w_1^2+w_2^2+24w_1w_2) - 144d(4w_1^2w_2 + w_1w_2^2) + 576w_1^2w_2^2$
    \\\hline
    $b_2^xb_3^y$ & $-6d^4 +36 d^3(2w_1 + 3w_2)-36 d^2(4w_1^2 + 9w_2^2 + 36w_1w_2) +1296 d( 2w_1^2w_2+3w_1w_2^2 ) -7776 w_1^2w_2^2$
    \\\hline
    $b_2^yb_3^x$ & $-6d^4 + 36d^3(3w_1 + 2w_2) - 36d^2(9w_1^2 + 4w_2^2 + 36w_1w_2) 
    +1296d(3w_1^2w_2+2w_1w_2^2 ) - 7776w_1^2w_2^2$
    \\\hline
    $b_2^xb_4^y$ & $-3d^4 +36 d^3(w_1+2w_2) \ -72 d^2(w_1^2+4w_2^2+12w_1w_2) + 1728 d(w_1^2w_2+2w_1w_2^2)  -6912 w_1^2w_2^2$
    \\\hline
    $b_2^yb_4^x$ & $-3d^4 +36 d^3(2w_1+w_2) \ - 72 d^2(4w_1^2+w_2^2+12w_1w_2) +1728 d(2w_1^2w_2+w_1w_2^2) -6912w_1^2w_2^2$
    \\\hline
    $b_3^xb_4^y$ & $3d^4 - 18 d^3(3w_1+4w_2) + 18 d^2(9w_1^2+16w_2^2+72w_1w_2) -1296 d(3w_1^2w_2 + 4w_1w_2^2) + 15552 w_1^2w_2^2$
    \\\hline
    $b_3^yb_4^x$ & $3d^4 - 18d^3(4w_1+3w_2) + 18 d^2(16w_1^2+9w_2^2+72w_1w_2) -1296d(4w_1^2w_2 + 3w_1w_2^2) + 15552w_1^2w_2^2$
    \\\hline\hline
    &{Total}: $144d^2w_1w_2$\\\hline
    \end{tabular}
    }
%\end{center}
%\end{adjustbox}
\end{table}

Dividing the total in the table above by $4 \cdot 36d^4$ gives the desired result.

The second case,  when ${4w_1\over d}\in (1,2)$,  can be obtained similarly.
According to Lemma~\ref{lem:bernoulli}, we just need to replace $b_4^x$ by $\widetilde{b}_4^x$ in $c_2(R^1\pi_\ast( \bigoplus_j \mathcal{L}_j)$.
The difference is 
$$(\widetilde{b}_4^x-b_4^x)\left(-b_4^y+b_1^y-3b_2^y+3b_3^y\right).$$
Using \eqref{bernoulli-formula}, we have $-b_4^y+b_1^y-3b_2^y+3b_3^y=0.$ 
So the difference is zero and the result follows.
\qed

%\newpage

\vskip .2in

\noindent{\small Mathematical Reviews, American Mathematical Society,   Ann Arbor, Michigan 48103, USA}

\noindent{\small E-mail: aef@ams.org}

\vskip .1in

\noindent{\small Department of Mathematics, Sun Yat-sen University, Guangzhou, Guangdong 510275,
China}

\noindent{\small E-mail: hewq@mail2.sysu.edu.cn}

\vskip .1in

\noindent{\small Department of Mathematics, University of Oregon, Eugene, OR 97403,
USA}

\noindent{\small E-mail: yfshen@uoregon.edu}


\begin{thebibliography}{99}
\bibitem{Abr}
Lowell Abrams,
{\em The quantum Euler class and the quantum cohomology of the Grassmannians,}
Israel J. Math. 117 (2000) 335--352.

\bibitem{Acosta}
Pedro Acosta,
{\em FJRW-Rings and Landau-Ginzburg Mirror Symmetry in Two Dimensions,}
arXiv:0906.0970 [math.AG]

%\bibitem{AT} Daisuke Aramaki,  Atsushi Takahashi, {\em Maximally-graded matrix factorizations for an invertible polynomial of chain type.} Adv. Math. 373 (2020), 107320, 23 pp.

\bibitem{Bayer}
Arend Bayer, 
{\em Semisimple quantum cohomology and blowups.}
Int. Math. Res. Not. 2004, no. 40, 2069--2083.


\bibitem{BM}
Arend Bayer, Yuri I. Manin,
{\em (Semi)simple exercises in quantum cohomology.}
The Fano Conference, 143--173, Univ. Torino, Turin, 2004. 

\bibitem{BP}
 Alexey Bondal, Alexander Polishchuk, 
{\em Homological properties of associative algebras: the method of helices.}
%Izv. Ross. Akad. Nauk Ser. Mat. 57 (1993), no. 2, 3–50; translation in
Russian Acad. Sci. Izv. Math. 42 (1994), no. 2, 219--260

\bibitem{Chiodo}
Alessandro Chiodo,
{\em Towards an enumerative geometry of the moduli space of twisted curves and rth roots.}
Compos. Math., 144(6):1461--1496, 2008.

%\bibitem{Dyc} T Dyckerhoff,  {\em Compact generators in categories of matrix factorizations.} Duke Math. J. 159 (2011), no. 2, 223--274.


\bibitem{Dubrovin}
Boris Dubrovin, 
{\em Geometry and analytic theory of Frobenius manifolds,}
 Doc. Math. (1998), no. Extra Vol. II, 315--326.

\bibitem{Faber}
Carel Faber,
{\em Algorithms for computing intersection numbers on moduli spaces of curves, with an application to the class of the locus of Jacobians.}
In New trends in algebraic geometry (Warwick, 1996), volume 264 of London Math. Soc. Lecture Note Ser., pages 93--109. Cambridge Univ. Press, Cambridge, 1999.


\bibitem{FFJMR}
Huijun Fan, Amanda Francis, Tyler J. Jarvis, Evan Merrell, Yongbin Ruan,
{\em Witten's $D_4$ integrable hierarchies conjecture.}
Chin. Ann. Math. Ser. B 37 (2016), no. 2, 175--192.


\bibitem{FJR1}
Huijun Fan, Tyler Jarvis, Yongbin Ruan,
{\em The Witten equation and its virtual fundamental cycle.}
 arXiv:0712.4025 (2007).

\bibitem{FJR}
Huijun Fan, Tyler Jarvis, Yongbin Ruan,
{\em The Witten equation, mirror symmetry and quantum singularity theory.}
 Ann. of Math. (2) 178 (2013), no. 1, 1--106.

\bibitem{FKK}
David Favero, Daniel Kaplan, Tyler L. Kelly,
{\em Exceptional Collections for Mirrors of Invertible Polynomials.}
arXiv:2001.06500 [math.AG]


\bibitem{Francis}
Amanda Francis,
{\em Computational techniques in FJRW theory with applications to Landau-Ginzburg mirror symmetry.}
 Adv. Theor. Math. Phys. 19 (2015), no. 6, 1339--1383.


\bibitem{G-ss}
Alexander B. Givental,
{\em Semisimple Frobenius structures at higher genus.} Internat. Math. Res. Notices 2001, no. 23, 1265--1286.
 
 \bibitem{G-vir}
Alexander B. Givental, 
{\em Gromov-Witten invariants and quantization of quadratic Hamiltonians.}
 Dedicated to the memory of I. G. Petrovskii on the occasion of his 100th anniversary. Mosc. Math. J. 1 (2001), no. 4, 551--568, 645.
 
 
 \bibitem{Habermann-hms}
Matthew Habermann,
{\em  Homological mirror symmetry for invertible polynomials in two variables.} Quantum Topol. 13 (2022), no. 2, pp. 207--253.
 
\bibitem{Habermann-curve}
Matthew Habermann,
{\em A note on homological Berglund-H\"ubsch-Henningson mirror symmetry for curve singularities.} arXiv:2205.12947 [math.SG]
 
 
 \bibitem{HaSm}
 Matthew Habermann,Jack Smith,
 {\em Homological Berglund-H\"ubsch mirror symmetry for curve singularities,} J. Symplectic Geom. Volume 18 (2020), no. 6, 1515--1574. 
 
 
\bibitem{HLSW}
Weiqiang He, Si Li, Yefeng Shen, Rachel Webb,
{\em Landau-Ginzburg mirror symmetry conjecture,}
J. Eur. Math. Soc. 24 (2022), no. 8, pp. 2915--2978.

%\bibitem{HPSV} Weiqiang He, Alexander Polishchuk, Yefeng Shen, Arkady Vaintrob, {\em A Landau-Ginzburg mirror theorem via matrix factorizations,} J. Reine Angew. Math. 794 (2023), 55--100.



\bibitem{HS}
Weiqiang He, Yefeng Shen,
{\em Virasoro constraints in quantum singularity theories,}
arXiv:2103.00313 [math.AG]

\bibitem{HO}
Yuki Hirano, Genki Ouchi, 
{\em Derived factorization categories of non-Thom--Sebastiani-type sums of potentials,}
arXiv:1809.09940 [math.AG]
To appear in Proceedings of the London Mathematical Society



\bibitem{Johnson}
Drew Johnson,
{\em Top Intersections on $\overline{M}_{g,n}$,} 
August 2011. https://bitbucket.org/drew j/top- intersections-on-mbar g-n.

%\bibitem{Kap} M. M.  Kapranov,  {\em Derived category of coherent bundles on a quadric.}   Funktsional. Anal. i Prilozhen. 20 (1986), no. 2, 67. 
 

\bibitem{Ke}
Hua-Zhong Ke, 
{\em On semisimplicity of quantum cohomology of $\mathbb{P}^1$-orbifolds.}
 J. Geom. Phys. 144 (2019), 1--14.

%\bibitem{Kravets} Oleksandr Kravets, {\em Categories of singularities of invertible polynomials,} arXiv:1911.09859 [math.AG]

\bibitem{KS}
Maximilian Kreuzer,  Harald Skarke,
{\em On the classification of quasihomogeneous functions.}
 Comm. Math. Phys. 150 (1992), no. 1, 137--147.

\bibitem{LSZ}
Jun Li, Yefeng Shen, Jie Zhou, 
{\em Higher Genus FJRW Invariants of a Fermat Cubic,} 
arXiv:2001.00343. 
To appear in Geometry $\&$ Topology.

\bibitem{Sai}
Kyoji Saito, 
{\em Period mapping associated to a primitive form.}
Publ. Res. Inst. Math. Sci. 19 (1983), no. 3, 1231--1264.


\bibitem{Teleman}
Constantin Teleman, 
{\em The structure of 2D semi-simple field theories,} 
Invent. Math. 188 (2012), no. 3, 525--588.

\end{thebibliography}
\end{document}